\documentclass[11pt,a4paper]{amsart}
\setlength\topmargin{0mm}
\setlength\textheight{230mm}
\setlength\oddsidemargin{-5mm}
\setlength\evensidemargin{-5mm}
\setlength\textwidth{170mm}
\usepackage{marginnote}
\usepackage[pdftex]{graphicx}
\usepackage[pdftex]{xcolor}
\usepackage{ascmac}
\usepackage{enumitem}  
\usepackage{graphicx}
\usepackage{hyperref}
\usepackage{mathtools,  stmaryrd}
\usepackage{accents}
\usepackage{xparse} \DeclarePairedDelimiterX{\Iintv}[1]{\llbracket}{\rrbracket}{\iintvargs{#1}}
\NewDocumentCommand{\iintvargs}{>{\SplitArgument{1}{,}}m}
{\iintvargsaux#1} %
\NewDocumentCommand{\iintvargsaux}{mm} {#1\mkern1.5mu,\mkern1.5mu#2}

\usepackage[normalem]{ulem}

\makeatletter
\newtheorem*{rep@theorem}{\rep@title}
\newcommand{\newreptheorem}[2]{%
\newenvironment{rep#1}[1]{%
 \def\rep@title{#2~\ref{##1}}%
 \begin{rep@theorem}}%
 {\end{rep@theorem}}}
\makeatother

\definecolor{RedOrange}{cmyk} {0, 0.77, 0.87, 0}
\definecolor{RoyalPurple}{cmyk} {0.84, 0.53, 0, 0}
\definecolor{YellowGreen}{cmyk} {0.44, 0, 0.74, 0}
\definecolor{Fuchsia}{cmyk} {0.47, 0.91, 0, 0.08}
\definecolor{Blue}{cmyk} {0.84, 0.53, 0, 0}
\definecolor{BlueViolet}{cmyk} {0.84, 0.53, 0, 0}
\definecolor{Black}{cmyk} {0.75, 0.68, 0.67, 0.9}
\usepackage{xcolor}
\usepackage{verbatim}
\usepackage{amsmath}
\usepackage{amssymb, mathrsfs}
\usepackage{amsbsy}

\usepackage{amscd}
\usepackage{amsthm}
%\usepackage{bm}

%\usepackage{bm,latexsym,amsfonts,amsmath,amssymb,amsthm,graphics,psfrag,amscd, mathrsfs}
%stmaryrd, 
\usepackage[english]{babel}
\usepackage{todonotes}
\usepackage[T1]{fontenc}

\usepackage{color}

\newcommand{\R}{\mathbb{R}}

\newcommand{\G}{\mathcal{G}}
\newcommand{\N}{\mathbb{N}}
\newcommand{\e}{\varepsilon}

\newcommand{\E}{\mathbb{E}}
\newcommand{\Z}{\mathbb{Z}}

\renewcommand{\P}{\mathbb{P}}

\newcommand{\eps}{\varepsilon}

\newcommand{\kF}{\mathcal{F}}

\newcommand{\dd}{{\rm d}}

\newcommand{\1}[1]{\mathbf{1}_{#1}}

\newcommand{\lin}{\left[\kern-0.15em\left[}
\newcommand{\rin} {\right]\kern-0.15em\right]}
\newcommand{\linf}{[\kern-0.15em [}
\newcommand{\rinf} {]\kern-0.15em ]}
\newcommand{\ilin}{\left]\kern-0.15em\left]}
\newcommand{\irin} {\right[\kern-0.15em\right[}

\def\al#1{\begin{align*}#1\end{align*}}
\def\aln#1{\begin{align}#1\end{align}}

\usepackage{constants}

\newconstantfamily{c}{symbol=c}

\newconstantfamily{a}{symbol=\alpha}

\newcommand{\secno}[1]{\thesection.\arabic{#1}}
\newconstantfamily{kE}{
symbol=\mathcal{E},
format=\secno,
reset={section}
}

\renewcommand{\hat}{\widehat}
\renewcommand{\tilde}{\widetilde}

\newcommand{\cev}[1]{\accentset{\leftarrow}{#1}}
\newcommand{\p}{{p^*}}
% =======================================================

\newtheorem{lem}{Lemma}[section]
\newtheorem{remark}[lem]{Remark}

\newtheorem{prop}[lem]{Proposition}
\newtheorem{thm}[lem]{Theorem}

\newtheorem{cor}[lem]{Corollary}

\newtheorem{thmx}{Theorem}

\newtheorem {rem}[lem] {Remark}

\newcounter{assu}
\setcounter{assu}{0}

%%%%%%%%%%%%%%%%%%%% COMMAND FOR REVISION %%%%%%%%%%%%%%%%%%%%%%%%%%%
\usepackage{color}
\definecolor{lilas}{RGB}{182, 102, 210}

\newcommand{\CO}{}
\numberwithin{equation}{section}

%%%  Environments of equations %%%%%%%%

\renewcommand{\S}{S}

\newcommand{\K}{K}

\newcommand{\M}{M}

\newcommand{\F}{\mathcal F}

\newcommand{\Dprev}{\Delta^{\operatorname{prev}}}
\newcommand{\Dmg}{\Delta^{\operatorname{mg}}}
\newcommand{\s}{{s^*}}

\title[Equivalence of fluctuations of discretized SHE and KPZ in subcritical weak disorder]{Equivalence of Fluctuations of discretized SHE and KPZ equations in the Subcritical Weak Disorder Regime}
\date{\today}

\author{Shuta Nakajima} 
\address[Shuta Nakajima]
{Meiji University, Kanagawa, Japan}
\email{njima(at)meiji.ac.jp}

\author{Stefan Junk} 
\address[Stefan Junk]
{Gakushuin University, 1-5-1 Mejiro, Toshima-ku, Tokyo 171-8588, Japan}
\email{sjunk(at)math.gakushuin.ac.jp}

\keywords{KPZ, SHE, Directed polymers, Fluctuations}
\subjclass[2010]{Primary 60K37; secondary 60K35; 82C44; 82D30}
\begin{document}

\maketitle

\begin{abstract}
We study the fluctuations of discretized versions of the stochastic heat equation (SHE) and the Kardar-Parisi-Zhang (KPZ) equation in spatial dimensions $d\geq 3$ in the weak disorder regime. The discretization is defined using the directed polymer model. Previous research has identified the scaling limit of both equations under a suboptimal moment condition and, in particular, it was established that both converge { in law} to the same limit. We extend this result by showing that the fluctuations of both equations are close in probability in the subcritical weak disorder regime, indicating that they share the same scaling limit (the existence of which remains open). Our result applies under a moment condition that is expected to hold throughout the interior of the weak disorder phase, which is currently only known under a technical assumption on the environment. We also prove a lower tail concentration of the partition functions.
\end{abstract}

\section{Introduction}

\subsection{Motivation}\label{sec:motivation}
The KPZ equation is formally defined as 
\begin{align}\label{eq:KPZ}
\partial_t h = \frac{1}{2} \Delta h - \gamma |\nabla h|^2 + \dot{W},
\end{align}
where \( h = h(t, x) \) with \( t \in \mathbb{R} \) and \( x \in \mathbb{R}^d \). Here, \(\Delta\) and \(\nabla\) represent the Laplacian and the gradient, respectively, and \(\dot{W}\) denotes (uncorrelated) space-time white noise. Since its introduction in the seminal paper \cite{PhysRevLett.56.889}, where it was proposed as a prototype for the scaling limit of a certain class of interface growth models, the KPZ equation has received significant attention from both the physical and mathematical communities {due to its conjectured universality (see the review article \cite{HT15}}).  

\smallskip Note that \eqref{eq:KPZ} is, at least formally, related to the multiplicative stochastic heat equation
\begin{align}\label{eq:SHE}
\partial_t u=\frac 12\Delta u + \gamma u\cdot \dot{W}
\end{align}
through a substitution $h=\gamma^{-1}\log u$, which is known as the Cole-Hopf transformation.

\smallskip Much effort has been devoted to making rigorous sense of the solutions {to both equations}, and it is now widely accepted that the one-dimensional case is well-understood. To see why \( d=1 \) is special, note that the term \( u \cdot \dot{W} \) in \eqref{eq:SHE} makes sense in \( d=1 \) as an Itô-integration, and thus the solution to \eqref{eq:SHE} is well-defined \cite{bertini1997stochastic}. For dimensions $d \geq 2$, however, the KPZ equation becomes more singular due to the higher dimensionality of the space-time white noise and the nonlinearity $|\nabla h|^2$ leads to considerable difficulties. Moreover, the roughness of the white noise implies that any solution is, at best, a distribution, and the square of a distribution $|\nabla h|^2$, as appears in the second term in \eqref{eq:KPZ}, is not well-defined. 

\smallskip One natural approach to deal with this problem is to discretize \eqref{eq:KPZ} and \eqref{eq:SHE} {as follows}. Let $(X(k,x))_{k\in\N,x\in \Z^d}$ be independent and identically distributed (i.i.d.)  centered random variables. Fix $n\in \N$ and consider the following difference equation: for $k=0,\dots,n$ and $x\in\Z^d$, 
\begin{equation}\label{eq: discrete-SHE}
\begin{split}
    U_n(0,x)&=1,\\
(\partial^{\operatorname{disc}} U_n(\cdot,x))(k)&=(\Delta^{\operatorname{disc}} U_n(k,\cdot))(x)+\frac 1{2d}\sum_{y\sim x}  U_n(k,y) X(k,y),
\end{split}
\end{equation}
{where $(\partial^{\operatorname{disc}} f)(k)\coloneqq f(k+1)-f(k)$ is the discrete time derivative and} $(\Delta^{\operatorname{disc}}f)(x)\coloneqq \frac 1{2d}\sum_{y\sim x}(f(y)-f(x))$ is the discrete Laplace operator and the sum is over all neighbors of $x$. This difference equation closely resembles \eqref{eq:SHE}, except that an average is taken after the application of the multiplication operator. The solution to this equation certainly exists and has been studied as directed polymer models (see Remark~\ref{remark: polymer-SHE} below). We further define $H_n(k,x) \coloneqq  \log U_n(k,x)$ following the prescription of the Cole-Hopf transformation, which {satisfies} a discretized version of \eqref{eq:KPZ} (see the discussion before Theorem~\ref{thmx:K_n}). Note that this discretization approach is similar in spirit to the point of view of the original paper \cite{PhysRevLett.56.889}, where the KPZ equation is related to the scaling limit of discrete growth models.

\smallskip In this paper, we are interested in the relation between the two equations  \eqref{eq:KPZ} and \eqref{eq:SHE} in higher dimensions, that is, for $d\geq 3$, where \eqref{eq:SHE} is  ill-posed. Similar to the role of $\gamma$, {the discretization as introduced in Section \ref{sec:polymer}} has a parameter $\beta$ controlling the strength of the noise (i.e., $X$ depends on the parameter $\beta$) and we are interested in the so-called \emph{weak disorder} regime, i.e., the set of $\beta$ where the influence of the noise becomes asymptotically weak and both $U_n$ and $H_n$ are expected to converge, after suitable re-centering and re-scaling, to a limit: the stochastic heat equation with {additive} white noise, or Lévy noise. 

\smallskip In previous work, the scaling exponent $\xi=\xi(\beta)$ of $U_n$ has been identified in the whole weak disorder regime \cite{J22}, and the scaling limits of both   $U_n$ and $H_n$ were shown to be the Edwards-Wilkinson equation, {i.e., the SHE with additive white noise,} for $\beta$ up to a critical value $\beta_2$ \cite{comets2017rate,CN21,lygkonis2022edwards}, which is known to differ from the critical value $\beta_c$ for the weak disorder regime.   The main result of this paper is to prove that the difference between $U_n$ and $H_n$ is much smaller than $n^{-\xi}$ with high probability whenever $\beta<\beta_c$. 

\smallskip There are two main consequences of this result: Firstly, {it proves that} the scaling exponent of $H_n$ is the same as that of $U_n$ when $\beta<\beta_c$. Secondly, if future work identifies the correct scaling limit for $U_n$ (we conjecture it to be the Edwards-Wilkinson equation with Lévy noise), then $H_n$ will converge to the same limit. This is significant because $U_n$, being a discretization of a linear equation, is conceptually easier to analyze than $H_n$.

\smallskip 
{Our main result is thus to show that $U_n$ and $H_n$ are close not only in law but also in probability. A similar statement first appeared in \cite[Section 1.3]{dunlap2020fluctuations} for sufficiently small $\beta$ in dimension $d\geq 3$ and is also implicit in the proofs of \cite{lygkonis2022edwards,cosco2022law} in the $L^2$-regime. Moreover, for $d=2$ the same is known in the subcritical regime, see \cite[Section~2.1]{CSZ20} and \cite{nakajima2023fluctuations}. All of these results are based on $L^2$-computations and the novelty of our work is thus to formalize the proofs and to extend the validity beyond the $L^2$-regime.} Our results indicate that the discretization of the nonlinear SPDE \eqref{eq:KPZ} effectively behaves linearly in the scaling limit, a phenomenon that opens new avenues for analyzing such equations in higher dimensions. We believe that our techniques, with some modifications, can also be applied to a {continuous space-time setting} studied in \cite{magnen2018scaling,gu2018edwards,cosco2022law}.

\subsection{The directed polymer model}\label{sec:polymer}
Let us introduce the directed polymer model, which is used to discretize the equations \eqref{eq:KPZ} and \eqref{eq:SHE}. This model was first introduced in \cite{huse1985pinning} to describe the interfaces in the two-dimensional Ising model with random coupling. Beyond its relation to the KPZ equation, its mathematical properties are remarkably interesting by themselves, and the model has been the focus of research, beginning with the seminal works \cite{imbrie1988diffusion,bolthausen1989note}. We refer to \cite{C17,zygouras2024directedpolymersrandomenvironment} for recent review articles.

Let $(\omega_{n,x})_{n\in\N,x\in\Z^d}$ be i.i.d.\ random variables with law $\P$ satisfying
\begin{align}\label{eq:expmom}
\mathbb{E}[e^{\beta|\omega_{1,0}|}]<\infty\qquad\text{ for all }\beta\geq 0.
\end{align} 
 The main interest in this context is to understand the long-term behavior of the \emph{polymer measure}, defined for $\beta\geq 0$ and $n\in\N$ as 
\begin{align*}
\mu_{\omega,n}^\beta(A)=(Z_{\omega,n}^\beta)^{-1}E[e^{\beta\sum_{k=1}^n\omega_{k,X_k}-n\lambda(\beta)}\mathbf{1}_A] .
\end{align*}
Here $P$ and $E$ denote the law and its expectation of the simple random walk $(X_k)_{k\geq 0}$ starting at $0$,  $Z_{\omega,n}^\beta$ is the normalization constant, called \emph{partition function} in this context:
\aln{
Z_n\coloneqq  Z_{\omega,n}^\beta\coloneqq E\left[e^{\beta\sum_{k=1}^n\omega_{k,X_k}-n\lambda(\beta)}\right],
}
and $\lambda(\beta)$ denotes the cumulative generating moment function of $ \omega_{n,x}$, i.e., $\lambda(\beta)\coloneqq \log{\mathbb{E}[e^{\beta \omega_{n,x}}]}$.  Let  { $\theta_{n,x}$} denote the space-time shift acting on the environment, i.e., $(\theta_{n,x}\omega)_{m,y}\coloneqq \omega_{n+m,x+y}$. For a random variable $Y$ with respect to $\omega$, we define $Y\circ \theta_{n,x} (\omega) \coloneqq  Y(\theta_{n,x} \omega)$. Note that $Z_n\circ \theta_{k,x}$ is the partition function starting at position $x$ and at time $k$ {and time-horizon $\llbracket k+1,n+k\rrbracket$}.
\begin{remark} \label{remark: polymer-SHE}
 For $n\in\N$, $k=0,\dots,n$ and $x\in\Z^d$, since $$Z_{n-(k+1)}\circ\theta_{k+1,x}=\frac 1{2d}\sum_{y\sim x}e^{\beta\omega_{n-k,y}-\lambda(\beta)}Z_{n-k}\circ\theta_{k,x},$$ 
 we obtain that $U_n(k,x)\coloneqq Z_{n-k}\circ\theta_{k,x}$ solves \eqref{eq: discrete-SHE} with $X(k,x)=e^{\beta\omega_{n-k,x}-\lambda(\beta)}-1$.
\end{remark}
A crucial observation is that $(Z_n)_{n\in\N}$ is a non-negative martingale with respect to the filtration $\F_n\coloneqq \sigma(\omega_{k,x}:k\leq n,\,x\in \Z^d)$ and thus converges to an almost sure limit $Z_\infty\coloneqq \lim_{n\to\infty}Z_n$. It is easy to see that $Z_\infty$ satisfies a zero-one law, $\mathbb{P}(Z_\infty>0)\in\{0,1\}$. We say that \emph{weak disorder}, resp. \emph{strong disorder}, holds if $\mathbb{P}(Z_\infty>0)=1$, resp. $\mathbb{P}(Z_\infty=0)=1$. It was shown \cite{CY06} that a phase transition occurs between these regimes, i.e., there exists $\beta_c=\beta_c(d)\in[0,\infty{]}$ such that weak disorder holds for $\beta\in[0,\beta_c)$ and strong disorder holds for $\beta>\beta_c$. Moreover, $\beta_c=0$ in $d=1$ and $d=2$, and $\beta_c>0$ in $d\geq 3$. Recently, it was further shown {\cite{junk2024strongdisorderstrongdisorder, JL25_1} that weak disorder holds at $\beta_c$ as well.} Much information has been obtained about the behavior of this model in the weak and strong disorder regimes and we refer to the surveys mentioned above for details. For our purposes, it is important to introduce the \emph{critical exponent} $\p(\beta)$:
$$\p(\beta)\coloneqq \sup\Big\{p\geq 1\colon \sup_{n\geq 1}\mathbb{E}[Z_n^p]<\infty\Big\},$$
and the critical temperature for $L^p$-boundedness,
\begin{align*}
\beta_p\coloneqq \sup\Big\{\beta\geq 0\colon\p(\beta)>p\Big\}.
\end{align*}
{It is clear from the definition that $\beta_q\leq \beta_p\leq \beta_c$ for any $1<p\leq q$.} Let us summarize {some} known properties of this exponent:
\begin{thmx}\label{thmx:p}
\begin{enumerate}
    \item[(i)] Weak disorder implies that $\p(\beta)\geq 1+\frac 2d$.
    \item[(ii)] In $d\geq 3$, it holds that $\beta_2>0$. If $\beta_2<\infty$, then $\beta_2<\beta_c$.
    \item[(iii)] If $\p(\beta)>1$, then $\p$ is left-continuous at $\beta$. Moreover, if $\p(\beta)\in(1+\frac 2d,2]$, then $\p$ is also right-continuous at $\beta$.
    \item[(iv)] {It holds that $\p(\beta_c)=1+\frac 2d$}.
    \item[(v)] If $\beta_2<\infty$, then $\beta_{1+2/d}>\beta_2$. Moreover, $\beta_{1+2/d}=\beta_c$ if $\omega$ is finitely supported, i.e., 
    \begin{align}\label{eq:finitely}
    \text{there exists $A\subseteq\R$ such that }|A|<\infty\text{ and }\mathbb{P}(\omega_{1,0}\in A)=1.
    \end{align}
\end{enumerate}
\end{thmx}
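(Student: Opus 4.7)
The plan is to establish each item by drawing on the directed polymer literature, since Theorem~\ref{thmx:p} collects known facts about the critical exponent $\p(\beta)$ rather than introducing a new statement. Throughout I would use the martingale structure of $Z_n$ under the filtration $\F_n$ together with the size-biased measure obtained by tilting by $Z_n$.

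For (i), my approach would be to prove $\p(\beta)\geq 1+\tfrac{2}{d}$ under weak disorder by following \cite{J22}: combine the size-bias representation of $\E[Z_n^p]$ for $p\in(1,2]$ with a self-improvement argument based on the transience of the difference walk of two independent simple random walks, converting an a priori fractional moment bound just above $1$ (obtained from $Z_\infty>0$ a.s.) into a uniform $L^{1+2/d-\eps}$ bound. For (ii), positivity $\beta_2>0$ in $d\geq 3$ is the classical second-moment computation $\E[Z_n^2]=E^{\otimes 2}[\exp(\lambda_2 L_n)]$ with $L_n$ the intersection local time of two independent simple random walks, which has finite exponential moments for small $\beta$ by transience; the strict separation $\beta_2<\beta_c$ when $\beta_2<\infty$ follows from a perturbative size-biased argument that compares the $L^2$-threshold with the martingale-degeneration threshold of \cite{CY06}.

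For (iii), left-continuity of $\p$ above $1$ would come from Fatou's lemma and the monotonicity of $p\mapsto\E[Z_n^p]$, while right-continuity on $(1+\tfrac{2}{d},2]$ uses hypercontractive concentration of $Z_n$ together with uniform integrability, again as in \cite{J22}. Items (iv) and (v)---the sharp value $\p(\beta_c)=1+\tfrac{2}{d}$, the strict separation $\beta_{1+2/d}>\beta_2$, and the identity $\beta_{1+2/d}=\beta_c$ under the finite-support assumption \eqref{eq:finitely}---are established in \cite{junk2024strongdisorderstrongdisorder, JL25_1}: the lower bound in (iv) is (i), the matching upper bound uses size-biased martingale analysis at criticality, and the finite-support case exploits extra concentration to extend $L^{1+2/d}$-boundedness all the way up to $\beta_c$. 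The main conceptual obstacle is (iv), since the techniques available below $\beta_c$ tend to degenerate precisely at criticality; in the present paper, however, Theorem~\ref{thmx:p} is used only as a black box, so the proof reduces to collecting these citations.
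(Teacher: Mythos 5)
Your proposal matches the paper's treatment: Theorem~\ref{thmx:p} is a survey of known results and the paper ``proves'' it purely by citation (parts (i) and (iii) to \cite{junk2024tail} and \cite{junk2023local}, part (ii) to \cite{bolthausen1989note} and the works surveyed in \cite{C17}, parts (iv) and (v) to \cite{junk2024strongdisorderstrongdisorder} and \cite{junk2023local}), exactly as you conclude. Your sketches of the underlying techniques (second-moment/intersection local time for $\beta_2>0$, size-biasing and self-improvement for the moment bounds) are consistent with those references, with only minor discrepancies in which citation carries which part.
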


Parts (i) and (iii) have recently been proved in \cite[Corollaries 2.7 and 2.9]{junk2024tail} and \cite[Theorem 1.2(i)]{junk2023local}. The observation that $\beta_2>0$ in $d\geq 3$ goes back to \cite{bolthausen1989note} and the strict inequality $\beta_2<\beta_c$ was first proposed in \cite{B04} and then proved over a number of works, see \cite[Remark 5.2]{C17} for the precise references. Parts (iv) and (v) are proved in \cite[Corollary 2.2]{junk2024strongdisorderstrongdisorder} and \cite[Theorem 1.2(ii)]{junk2023local}.

\smallskip The results of this paper are valid for $\beta<\beta_{1+2/d}$, i.e., under the assumption, 
\begin{align}\tag{SWD}\label{eq:assumption} 
\p(\beta)>1+\frac{2}{d},
\end{align}
which we will call \emph{subcritical weak disorder} in analogy with the terminology in the two-dimensional case. {Note that since $\beta_c=0$ for $d=1$ and $d=2${, see \cite[Theorem~1.3]{CSY03},} \eqref{eq:assumption} implies $d\geq 3$.}

\smallskip {Note that by Theorem~\ref{thmx:p}(iv), $\p(\beta)>1+2/d$ is known to hold for some range of parameters beyond $\beta_2$ without any additional assumptions on the environment, so that the subcritical weak disorder regime is strictly larger than the $L^2$-regime{, i.e.,} $[0,\beta_2)$.} While $\beta_{1+2/d}=\beta_c$ is currently only known under the rather restrictive assumption \eqref{eq:finitely}, we believe it to be true more generally and that the main result below is thus valid in the whole interior $[0,\beta_c)$ of the weak disorder phase.  In the proofs, we do not use \eqref{eq:finitely}, only \eqref{eq:assumption}. 

\smallskip {The reason why we require \eqref{eq:assumption} is that we crucially use} the local limit theorem proved in \cite{junk2023local}. To state it, let us introduce the notation $(m,x)\leftrightarrow(n,y)\iff P(X_{n-m}=y-x)>0$ to mean that $(m,x)$ and $(n,y)$ have the same parity, and for the $(m,x)\leftrightarrow(n,y)$ the \emph{pinned partition function}
\begin{align*}
Z_{(m,n)}^{m,x;n,y}\coloneqq E^{m,x;n,y}[e^{\sum_{i=m+1}^{n-1} \omega_{i,X_i}-(n-m-2)\lambda(\beta)}],
\end{align*}
where $E^{m,x;n,y}$ denotes the expectation with respect to the random walk bridge between $(m,x)$ and $(n,y)$. {Note that neither the environment at $(m,x)$ nor at $(n,y)$ is taken into account}. The local limit theorem essentially states that, for a suitable {range} of endpoints, the pinned partition function $Z_{(0,n)}^{0,0;n,y}$ has the same integrability as $Z_n$.
\begin{thmx}[{\cite[Theorem 1.1(i)]{junk2023local}}]\label{thmx:local}
{Assume \eqref{eq:assumption} and let $p\in(1+2/d,\p(\beta)\wedge 2)$. There exist $r=r(\beta,p)$ and $C=C(\beta,p)>0$ such that, for all $n\in\N$, $y\in\Z^d$ with $|y|\leq rn$ and $(0,0)\leftrightarrow(n,y)$,
\begin{align*}
\mathbb{E}[(Z_{(0,n)}^{0,0;n,y})^p]\leq C.
\end{align*}}
\end{thmx}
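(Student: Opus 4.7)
The plan is to reduce the statement to a heat-kernel-type moment bound on a point-to-point partition function, and then to prove this bound by a midpoint decomposition combined with a bootstrap argument. The main obstacle will be to avoid a multiplicative blow-up in the resulting recursion, which must be controlled using the slack in the subcriticality assumption \eqref{eq:assumption}.

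\textbf{Step 1 (Reduction to point-to-point moments).} Introduce the point-to-point partition function with the endpoint environment omitted,
\[
\widetilde Z_n(y)\coloneqq E\bigl[e^{\beta\sum_{i=1}^{n-1}\omega_{i,X_i}-(n-1)\lambda(\beta)}\mathbf{1}_{X_n=y}\bigr].
\]
Conditioning on $X_n=y$ yields $\widetilde Z_n(y)=P(X_n=y)\,e^{-\lambda(\beta)}\,Z^{0,0;n,y}_{(0,n)}$, so the desired bound is equivalent to
\[
\mathbb{E}[\widetilde Z_n(y)^p]\leq C'\,P(X_n=y)^p\qquad\text{for }|y|\leq rn.
\]
By the local central limit theorem for the simple random walk, the right-hand side exhibits heat-kernel decay in the moderate-deviation range, so the statement is a polymer Gaussian upper bound on the $L^p$-moments of $\widetilde Z_n(y)$.

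\textbf{Step 2 (Midpoint decomposition).} Using the Markov property at $m=\lfloor n/2\rfloor$ together with the independence of environments before and after time $m$ and the translation invariance of $\mathbb{P}$,
\[
\widetilde Z_n(y)=\sum_z\widetilde Z_m(z)\,e^{\beta\omega_{m,z}-\lambda(\beta)}\,\widetilde Z^{\leftarrow}_{n-m}(z,y),
\]
where $\widetilde Z^{\leftarrow}_{n-m}(z,y)$ is measurable with respect to $(\omega_{i,\cdot})_{i\in\llbracket m+1,n-1\rrbracket}$ and has the same law as $\widetilde Z_{n-m}(y-z)$. Applying Minkowski's inequality to the sum over $z$ and using independence of the three factors gives
\[
\|\widetilde Z_n(y)\|_p\leq C_\omega\sum_z\|\widetilde Z_m(z)\|_p\,\|\widetilde Z_{n-m}(y-z)\|_p,
\]
where $C_\omega=e^{\lambda(p\beta)/p-\lambda(\beta)}<\infty$ by \eqref{eq:expmom}.

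\textbf{Step 3 (Bootstrap on normalized quantities).} Setting $a_n(y):=\|\widetilde Z_n(y)\|_p/P(X_n=y)$, the preceding inequality becomes
\[
a_n(y)\leq C_\omega\sum_z\frac{P(X_m=z)P(X_{n-m}=y-z)}{P(X_n=y)}\,a_m(z)\,a_{n-m}(y-z),
\]
where the weights form the simple random walk bridge distribution of $X_m$ given $X_n=y$. For $|y|\leq rn$ with $r$ small, this bridge measure is concentrated on $|z|\leq rm$, where the induction hypothesis applies. The main obstacle is that the naive Minkowski bound alone yields a recursion of the form $a_n\leq C_\omega a_{n/2}^2$, which blows up geometrically, so a finer estimate is required. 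The plan to close the bootstrap is to exploit the slack $\p(\beta)>p$ provided by \eqref{eq:assumption} and Theorem~\ref{thmx:p}(iii): by fixing a slightly larger exponent $p'\in(p,\p(\beta)\wedge 2)$, one applies H\"older's inequality in the midpoint sum to trade one power of $a$ for the $L^{p'}$-moment $\sup_n\mathbb{E}[Z_n^{p'}]<\infty$, which acts as a reservoir preventing unbounded growth. Calibrating the H\"older parameters and the size of $r=r(\beta,p)$ so that the Gaussian weights from the SRW bridge dominate, together with a base case for small $n$ handled directly via exponential moments of $\omega$, I expect the induction to close and deliver the uniform bound $a_n(y)\leq C$.
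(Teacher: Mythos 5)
Theorem~\ref{thmx:local} is not proved in this paper at all: it is imported verbatim from \cite[Theorem 1.1(i)]{junk2023local}, where it is the main result, so there is no internal proof to compare your attempt against. Judged on its own terms, your proposal has a genuine gap at precisely the point you flag as the ``main obstacle,'' and the fix you sketch cannot work as described. Steps 1 and 2 are fine: the reduction to $\mathbb{E}[\widetilde Z_n(y)^p]\leq C\,P(X_n=y)^p$ and the midpoint factorization with Minkowski's inequality are correct. But the resulting recursion provably cannot close. Since $\|\widetilde Z_k(z)\|_p\geq \|\widetilde Z_k(z)\|_1=P(X_k=z)$, one always has $a_k(z)\geq 1$, while $C_\omega=e^{\lambda(p\beta)/p-\lambda(\beta)}>1$ strictly (by convexity of $\lambda$ with $\lambda(0)=0$, unless $\omega$ is degenerate). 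Hence the inequality $a_n\leq C_\omega A^2$ obtained from the inductive hypothesis $a_m\leq A$ admits no fixed point compatible with $A\geq 1$ (closing it would force $A\leq 1/C_\omega<1$), and iterating over dyadic scales yields at best $a_n\leq n^{c}$ for some $c>0$, not a uniform bound.

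The H\"older ``reservoir'' idea does not repair this. The quantity $\sup_n\mathbb{E}[Z_n^{p'}]<\infty$ only controls the spatial sum $\sum_z\widetilde Z_m(z)$ in $L^{p'}$; this is perfectly compatible with the $L^p$ mass of $Z_m$ concentrating on a sparse set of endpoints where $a_m(z)$ is large, which is exactly the scenario the theorem must rule out, so no soft interpolation between the bridge sum and the point-to-plane moment can substitute for the inductive hypothesis on one of the two factors. Nor does any choice of H\"older exponents remove the strictly-greater-than-one constant $C_\omega$ contributed by the single environment variable at the splitting time (and replacing Jensen over the bridge weights by pulling out a supremum introduces the further constant $\max_z P_{n-m}(y-z)/P_n(y)\approx 2^{d/2}$). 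This multiplicative loss per scale is the reason the result was genuinely difficult beyond the $L^2$-phase. The actual proof in \cite{junk2023local} does not run a point-to-point doubling bootstrap; roughly, it approximates $Z^{0,0;n,y}_{(0,n)}$ by a product of a forward and a backward point-to-plane partition function supported on short time windows near the two endpoints (compare \eqref{eq:error2} and the surrounding computation in the present paper) and controls the discrepancy by a martingale/chaos expansion whose increments are \emph{additively} summable, the summability being where $\p(\beta)>1+2/d$ enters through the replica-overlap estimates. To complete your argument you would need to replace Step 3 by a scheme of this type, in which the error per scale is additive and summable rather than multiplicative.
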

Finally, we need to make a mild technical assumption that ensures that the inverse partition function $Z_n^{-1}$ is well-behaved in the weak disorder phase. More precisely, we assume that the environment $\P$ satisfies the following concentration property:  
\begin{equation}\tag{CONC}\label{eq:concprop}
\begin{split}
    &\exists C>0, \exists \gamma\in (1,2],\,\forall m\in\N\text{ and }\forall f:\R^m\to\R\text{ convex and $1$-Lipschitz}:\\
    &\qquad\mathbb{P}(|f(\omega_m)-\mathbb{E}[f(\omega_m)]|>t)\leq Ce^{-t^\gamma/C },
\end{split}
\end{equation}
where $\omega_m$ denotes the first $m$ coordinates of {$(\omega_{n,x})_{n\in\N,x\in\Z^d}$} in an arbitrary enumeration of $\N\times\Z^d$.  Assumption \eqref{eq:concprop} is satisfied if $\omega$ is bounded or Gaussian, or more generally if $\P$ satisfies a log-Sobolev inequality, and goes back to a result from \cite{L01} (see the discussion in \cite[Section 2.1]{CTT17}). {Property \eqref{eq:concprop} has already played an important result in the analysis of the \emph{intermediate disorder regime} in dimension $2$, see \cite{CSZ20}, and we extend these techniques to $d\geq 3$.} The Poisson distribution is an example for an environment such that {\eqref{eq:expmom} holds} but \eqref{eq:concprop} fails. The significance of \eqref{eq:concprop} is that it implies the lower tail concentration of the partition function. {We consider $Z_n^{x} \coloneqq  Z_n\circ \theta_{0,x}$ the partition function starting at position $x$ and at time $0$.}

\begin{thm}\label{thm: lower tail concentration}
Assume that \eqref{eq:concprop} is satisfied. If $\p(\beta)>1+2/d$, then there exists $C>0$ such that, for all $u\geq 1$ and $n\in\N$,
\begin{align}\label{eq:newtail}
    \mathbb{P}(Z_n\leq 1/u)\leq Ce^{-(\log u)^\gamma/C}.
\end{align}
    In particular, $Z_{\infty}$ has all negative moments and the following uniform bound holds: for all $\eps>0$ and $k\in\N$ there exists $C>0$ such that, for all $A\subseteq\Z^d$,
    \begin{align}\label{eq:union}
    \E\Big[\max_{x\in A,n\in\N}(Z_n^x)^{-k}\Big]\leq C|A|^\eps.
    \end{align}
\end{thm}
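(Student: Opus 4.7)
The plan is to apply the convex concentration assumption \eqref{eq:concprop} to
\[
f_n(\omega):=\log Z_n(\omega),
\]
regarded as a function of the finitely many environment variables in the backward light cone of the origin. This function is convex in $\omega$, being the logarithm of a sum of exponentials of linear functionals of $\omega$, and its partial derivatives are
$\partial_{\omega_{k,x}}f_n=\beta\,\mu_{\omega,n}^\beta(X_k=x)\in[0,\beta]$, so its squared $\ell^2$ gradient equals
\[
\|\nabla f_n\|_2^2=\beta^2\sum_{k\le n,\,x}\mu_n^\beta(X_k=x)^2=\beta^2\,\mu_n^{\beta,\otimes 2}(R_n),
\]
where $R_n=\#\{1\le k\le n:X_k=Y_k\}$ counts the coincidences of two independent polymer-distributed walks. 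This gradient is random, and turning it into a deterministic Lipschitz bound is the heart of the proof.

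The first task is to establish strong tails for $\mu^{\otimes 2}(R_n)$. A replica/size-biased computation relates the moments of $\mu^{\otimes 2}(R_n)$ to moments of the pinned partition functions appearing in Theorem~\ref{thmx:local}. Under \eqref{eq:assumption} that theorem gives $L^p$-bounds for some $p>1+2/d$, and by iterating the computation for higher replica moments one extracts, for some $c>0$,
\[
\sup_n\mathbb{E}\bigl[\exp\bigl(c\,\mu^{\otimes 2}(R_n)\bigr)\bigr]<\infty,
\qquad\text{hence}\qquad\mathbb{P}\bigl(\|\nabla f_n\|_2>L\bigr)\le Ce^{-cL^2}.
\]
To invoke \eqref{eq:concprop}, I would introduce, for each threshold $L>0$, the convex inf-convolution
\[
\tilde f_{n,L}(\omega):=\inf_{\omega'}\bigl(f_n(\omega')+L\|\omega-\omega'\|_2\bigr),
\]
which is convex, $L$-Lipschitz, and satisfies $\tilde f_{n,L}=f_n$ on $\{\|\nabla f_n\|_2\le L\}$ (by a one-line convexity computation) and $\tilde f_{n,L}\le f_n$ pointwise. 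Applying \eqref{eq:concprop} to $\tilde f_{n,L}/L$ gives
$\mathbb{P}(\tilde f_{n,L}\le\mathbb{E}\tilde f_{n,L}-t)\le C\exp(-t^\gamma/(CL^\gamma))$, so
\[
\mathbb{P}(Z_n\le 1/u)\le\mathbb{P}\bigl(\tilde f_{n,L}\le-\log u\bigr)+\mathbb{P}\bigl(\|\nabla f_n\|_2>L\bigr),
\]
and optimizing $L$ as a fractional power of $\log u$ yields \eqref{eq:newtail}, provided that $\mathbb{E}\tilde f_{n,L}\ge -C$ uniformly in $n$ and $L$.

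The main obstacle is precisely this uniform lower bound on $\mathbb{E}\tilde f_{n,L}$: since $\tilde f_{n,L}\le f_n$, Jensen's inequality $\mathbb{E} f_n\le\log\mathbb{E} Z_n=0$ goes in the wrong direction. I would resolve this by a bootstrap. First, Paley--Zygmund applied to the mean-one martingale $Z_n$, which is bounded in $L^{1+2/d}$ by \eqref{eq:assumption}, gives $\mathbb{P}(Z_n\ge\tfrac12)\ge c>0$ uniformly in $n$. Second, a preliminary application of \eqref{eq:concprop} with a crude, $n$-dependent value of $L$ yields a weak one-sided tail on $f_n$ strong enough to establish uniform integrability of $(f_n)^-$, hence $\mathbb{E} f_n\ge -C$. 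Third, the comparison
\[
\mathbb{E}[f_n-\tilde f_{n,L}]
=\mathbb{E}\bigl[(f_n-\tilde f_{n,L})\mathbf{1}_{\|\nabla f_n\|_2>L}\bigr]
\]
(using that the two functions agree on $\{\|\nabla f_n\|_2\le L\}$), combined with the exponential tail on $\|\nabla f_n\|_2$, gives $\mathbb{E}\tilde f_{n,L}\ge\mathbb{E} f_n-o(1)\ge -C$ for $L$ sufficiently large.

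The ``in particular'' statements follow routinely. Integrating \eqref{eq:newtail} yields $\sup_n\mathbb{E}[Z_n^{-k}]<\infty$ for every $k\in\N$, and Fatou extends this to $Z_\infty$. For \eqref{eq:union}, note that $\{(Z_n^x)^{-1}\}_{n\ge 0}$ is a non-negative submartingale (by Jensen applied to the convex function $z\mapsto z^{-1}$ on the positive martingale $Z_n^x$), so Doob's maximal inequality at a high power $p=p(u)$, combined with the negative-moment estimate $\mathbb{E}[Z_n^{-p}]\le e^{Cp^{\gamma/(\gamma-1)}}$ coming from \eqref{eq:newtail}, transfers the stretched-exponential tail to $\sup_n(Z_n^x)^{-1}$. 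A union bound over $x\in A$ then shows $\max_{x\in A,\,n}(Z_n^x)^{-1}\le\exp\bigl(C(\log|A|)^{1/\gamma}\bigr)$ with overwhelming probability, which integrates to a bound of order $|A|^{O((\log|A|)^{1/\gamma-1})}$, absorbed into $|A|^{\varepsilon}$ for any $\varepsilon>0$ since $1/\gamma<1$.
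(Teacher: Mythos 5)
Your overall frame---convexity of $\log Z_n$, gradient given by the replica overlap, and a concentration inequality for convex functions---matches the paper's. But there is a genuine gap at the step you yourself identify as ``the heart of the proof'': the quantitative tail bound $\mathbb{P}(\|\nabla f_n\|_2>L)\leq Ce^{-cL^2}$, equivalently uniform exponential moments of $\mu_n^{\beta,\otimes 2}(R_n)$. This quantity has $Z_n^{-2}$ in its denominator, so any ``replica/size-biased computation'' of its moments requires control of negative moments of $Z_n$ --- which is precisely the conclusion of the theorem. The argument is circular. (Indeed, in the paper the moment bounds on $\sum_y\alpha_n(y)^2$ in Proposition~\ref{prop:alpha2} are derived \emph{from} Theorem~\ref{thm: lower tail concentration}, not the other way around.) Moreover, uniform exponential moments of the overlap throughout the subcritical weak disorder regime are not known and go far beyond what Theorem~\ref{thmx:local} yields. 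The same circularity infects your bootstrap for $\mathbb{E}\tilde f_{n,L}\geq -C$: the only deterministic Lipschitz constant available for $f_n$ is of order $\sqrt n$, so the ``preliminary application'' of \eqref{eq:concprop} gives at best $\mathbb{E} f_n\geq -C\sqrt n$, and upgrading this to a uniform bound again requires the negative-moment control you are trying to prove.

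The paper avoids all of this by using a different concentration inequality (Theorem~\ref{thmx:conc}, from \cite{CTT17}): for convex $f$,
\begin{align*}
\mathbb{P}(f\leq a-t)\,\mathbb{P}\Big(f\geq a,\ \|\nabla f\|_2^2\leq b\Big)\leq Ce^{-(t/b^2)^\gamma/C},
\end{align*}
which requires the gradient to be small only \emph{on an event of probability bounded below}, not with quantitative tails. That event, $\{Z_n\geq 1/K,\ \sum_{k,x}\mu_{\omega,n}^\beta(X_k=x)^2\leq K/\beta^2\}$, has probability $\geq 1/2$ for some fixed $K$ by two purely \emph{qualitative} almost-sure facts: $\inf_nZ_n>0$ in weak disorder, and $\sup_n\sum_{k,x}\mu_{\omega,n}^\beta(X_k=x)^2<\infty$ a.s.\ under \eqref{eq:assumption} (from \cite{junk2023local}). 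No tail bound on the overlap and no a priori lower bound on $\mathbb{E}\log Z_n$ is ever needed. If you want to salvage your inf-convolution route, you would need to replace the exponential gradient tail by this kind of one-sided input; as written, the step cannot be completed. Your derivation of \eqref{eq:union} from \eqref{eq:newtail} (submartingale maximal inequality at a $u$-dependent power plus a union bound) is a legitimate, if slightly different, alternative to the paper's layered Markov--Doob argument.
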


The conclusion of Theorem~\ref{thm: lower tail concentration} was previously known in $L^2$-regime  \cite{ben2009large}, i.e., under the assumption $\beta<\beta_2$. Our proof follows the same argument and relies on Theorem~\ref{thmx:local} for the key estimate.\\

\subsection{Relation between directed polymer model and KPZ/SHE}

 \smallskip To describe the scaling limit of {$U_n$ and $K_n$ introduced in Section \ref{sec:motivation}}, we denote the \emph{scaling exponent} as
\begin{align}\label{eq:xi}
 \xi\coloneqq \xi(\beta)\coloneqq \frac{d}{2}-\frac{1+\frac{d}{2}}{\p(\beta)\wedge 2}.
\end{align}
{Note that Theorem~\ref{thmx:p}(i) implies $\xi(\beta)\geq 0$ and that $\xi(\beta)>0$ if and only if \eqref{eq:assumption} holds.}

\smallskip  {Let $\mathcal C_c(\R^d)$ denote the set of continuous, compactly supported functions $f\colon\R^d\to\R$.}
\begin{thmx}\label{thmx:EW}
{For $f\in\mathcal C_c(\R^d)$}, set 
\begin{align}\label{eq:def_Sn}
S_n(f)\coloneqq n^{-d/2}\sum_{x\in\Z^d}f(x/\sqrt n) (Z_n^{x}-1).
\end{align}
\begin{enumerate}
    \item[(i)]In weak disorder, $\lim_{n\to\infty}S_n=0$ in probability.
    \item[(ii)]In $L^2$-regime, i.e., for $\beta<\beta_2$, $n^{\xi} S_n(f)$ converges in law as $n\to\infty$ to a centered Gaussian with an explicit variance given in \cite[Theorem 1.2]{lygkonis2022edwards}.
    \item[(iii)]Assume $f\not\equiv 0$. In weak disorder, for any $\eps>0$ it holds that
    \begin{align*}
        \lim_{n\to\infty}\P\Big(n^{-\xi-\eps}\leq |S_n(f)|\leq n^{-\xi+\eps}\Big)=1.
    \end{align*}
\end{enumerate}
\end{thmx}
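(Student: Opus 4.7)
The plan is to treat the three parts separately: part (ii) is exactly \cite[Theorem~1.2]{lygkonis2022edwards}, so I would only cite it. Parts (i) and (iii) require different tools, with the lower bound in (iii) being the real obstacle.

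For (i), I would split
\[
S_n(f)= \underbrace{n^{-d/2}\sum_x f(x/\sqrt n)(Z_\infty^x - 1)}_{=:A_n} + \underbrace{n^{-d/2}\sum_x f(x/\sqrt n)(Z_n^x - Z_\infty^x)}_{=:B_n}.
\]
The family $(Z_\infty^x)_{x\in\Z^d}$ is stationary and ergodic under spatial shifts of $\omega$ with $\E[Z_\infty]=1$. Approximating $f$ by a step function constant on mesoscopic boxes of side $\delta\sqrt n$ and applying the multidimensional ergodic theorem on each such box (which contains $\sim \delta^d n^{d/2}\to\infty$ lattice points) gives $A_n\to 0$ in probability, first letting $n\to\infty$ and then $\delta\to 0$. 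For $B_n$, Theorem~\ref{thmx:p}(i) yields $\sup_n\E[Z_n^{1+2/d}]<\infty$, so the martingale $(Z_n)$ is uniformly integrable and $\|Z_n-Z_\infty\|_{L^1}\to 0$; since the number of lattice points in $\sqrt n\cdot\mathrm{supp}(f)$ is $O(n^{d/2})$, translation invariance and the triangle inequality give $\E[|B_n|]\leq \|f\|_\infty\|Z_n-Z_\infty\|_{L^1}\cdot O(1)\to 0$.

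For the upper bound in (iii), it suffices to prove $\E[|S_n(f)|^p]\leq C n^{-p\xi}$ for some $p\in(1,\p(\beta)\wedge 2)$; Markov's inequality then gives $\P(|S_n(f)|>n^{-\xi+\eps})\leq Cn^{-p\eps}\to 0$. To obtain the moment estimate, I would write $Z_n-1=\sum_{k=1}^n\Delta_k$ with $\Delta_k=\E[Z_n\mid\F_k]-\E[Z_n\mid\F_{k-1}]\in\sigma(\omega_k)$, making $S_n(f)$ a sum of martingale increments. Burkholder--Davis--Gundy in $L^p$ reduces the bound to an estimate on the quadratic variation, and the explicit formula for $\Delta_k$ expresses each increment in terms of pinned partition functions $Z^{0,0;n,y}_{(0,n)}$ whose $L^p$-moments are controlled uniformly by Theorem~\ref{thmx:local}; combining with the local central limit theorem for the underlying simple random walk then produces the factor $n^{-p\xi}$. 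In the $L^2$-regime the computation reduces to the classical intersection-local-time bound and gives the sharp prefactor matching part~(ii).

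For the lower bound in (iii), the anti-concentration statement $\P(|S_n(f)|<n^{-\xi-\eps})\to 0$, the two sub-regimes behave differently. Inside the $L^2$-regime ($\beta<\beta_2$), it is immediate from part~(ii), since a centered non-degenerate Gaussian assigns vanishing mass to intervals shrinking to $\{0\}$. Outside the $L^2$-regime, i.e., for $\beta\in[\beta_2,\beta_{1+2/d})$, I would aim for a matching lower moment bound $\E[|S_n(f)|^q]\geq c\,n^{-q\xi}$ for some $q$ slightly below $\p(\beta)$ combined with a Paley--Zygmund inequality applied to $|S_n(f)|^q$; the key heuristic is that a single atypically large value of $\omega_{k,y}$ propagates into a contribution of the correct order $n^{-\xi}$ to $S_n(f)$, so quantifying this heavy-tail mechanism via the optimal moment exponent from Theorem~\ref{thmx:p}(iv) should yield the lower estimate. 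This step is the main obstacle: the usual second-moment Paley--Zygmund reasoning breaks down beyond the $L^2$-threshold and the heavy-tail analysis has to be calibrated against Theorem~\ref{thmx:local}, which is essentially the content of the scaling-exponent identification in \cite{J22}.
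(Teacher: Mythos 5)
First, a point of context: Theorem~\ref{thmx:EW} is one of the paper's lettered background theorems and is not proved in the paper at all — part (ii) is quoted from \cite{lygkonis2022edwards}, and parts (i) and (iii) from \cite{J22}. So the comparison is against the standard external arguments rather than an in-text proof.

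Your arguments for (i), (ii) and the \emph{upper} bound in (iii) are essentially sound. For (i), the split into the $Z_\infty$-field (handled by the spatial ergodic theorem after step-function approximation of $f$) and the $L^1$-error $Z_n-Z_\infty$ is correct; the only quibble is that Theorem~\ref{thmx:p}(i) gives $\sup_n\E[Z_n^{p}]<\infty$ for $p<1+2/d$, not necessarily at $p=1+2/d$ itself, but any $p>1$ suffices for uniform integrability. For the upper bound in (iii), the route via the martingale decomposition $Z_n^x-1=\sum_{k,y}Z_{k-1}^x[\1{X_k=y}]E_{k,y}$, Burkholder in $L^p$ with $p<\p\wedge 2$, and uniform $L^p$-bounds on the reverse/pinned partition functions is exactly the computation the paper carries out for the closely related quantities in Lemma~\ref{lem: martingale estimate until Ln} and Section~\ref{sec:she}; it yields $\E\,|S_n(f)|\leq Cn^{-\xi_p+o(1)}$ with $\xi_p=\frac d2-\frac{1+d/2}{p}\uparrow\xi$ as $p\uparrow\p\wedge2$, which suffices for Markov. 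Your stated bound $\E[|S_n(f)|^p]\leq Cn^{-p\xi}$ is slightly too strong (one only gets $n^{-p\xi_p}$ up to logarithms), but the weaker correct version still closes that half of (iii).

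The genuine gap is the lower bound in (iii) outside the $L^2$-regime, which is the entire nontrivial content of that part. Paley--Zygmund applied to $|S_n(f)|^q$ can at best produce $\liminf_n\P(|S_n(f)|\geq c\,n^{-\xi})>0$, i.e.\ a probability bounded away from zero by a constant; the theorem asserts that $\P(|S_n(f)|\geq n^{-\xi-\eps})\to 1$, and no calibration of moments upgrades Paley--Zygmund into such an anti-concentration statement with vanishing exceptional probability. There is a second, independent obstruction: with $q$ chosen slightly below $\p(\beta)$, the Paley--Zygmund denominator requires $\E[|S_n(f)|^{2q}]$ with $2q>\p(\beta)$, and these moments are not uniformly bounded beyond the $L^2$-regime; the $L^1$--$L^p$ form of the inequality avoids this but still only yields a constant lower bound on the probability. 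The actual proof in \cite{J22} rests on a different mechanism (a conditional, multi-scale argument showing that the fluctuation of the required order is produced with conditional probability close to one given the environment up to an intermediate time), together with a matching lower moment bound that you only motivate heuristically. As written, the proposal establishes the upper bound and the $L^2$-regime case of the lower bound (the latter correctly deduced from part (ii)), but defers the main step to the reference rather than proving it.
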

Part (i) of the previous result {is} a law of large numbers for $U_n$ \cite[Theorem C(i)]{J22} if the starting point is averaged with a suitable test function and part (ii) shows that the scaling limit for the fluctuations after the appropriate centering is the Edwards-Wilkinson equation with Gaussian white noise \cite{lygkonis2022edwards}. Finally, part (iii) identifies the correct scaling exponent in the whole of the weak disorder regime \cite{J22} ({with a sub-polynomial rate of decay at $\beta_c$}). 

\smallskip {Let us also mention that the variance of the limiting object in part (ii) diverges as $\beta\uparrow\beta_2$, so the regime $\beta>\beta_2$ requires different techniques.} We conjecture  that the scaling limit of $S_n(f)$ for $\beta\in(\beta_2,\beta_c)$ should be the Edwards-Wilkinson equation with $\frac{\p(\beta)}2$-Lévy noise, instead of white noise, but {at the moment we do not know} how to make this rigorous. {Note also} that Theorem~\ref{thmx:EW}(ii) does not cover the value $\beta_2$ itself, while on the other hand part (iii) together with Theorem~\ref{thmx:p}(iii) show that {$\p(\beta_2)=2$} and  $|S_n(f)|=n^{-\frac{d-2}4+o(1)}$, i.e., the same scaling exponent as in the case $\beta<\beta_2$ appears. We believe that the scaling limit in this case should be the same as for $\beta<\beta_2$ after an appropriate logarithmic correction has to be{en} added in the scaling.

\smallskip Let us now {discuss} the discretization of the KPZ equation \eqref{eq:KPZ}. As was first proposed by \cite{bertini1997stochastic}, the ``correct'' way to obtain a solution to the KPZ equation is via the Cole-Hopf transformation, i.e., by taking the logarithm of a solution to the stochastic heat equation. Hence, we  define $H_n(k,x)\coloneqq \log U_n(k,x)$.  Recalling the discrete time derivative $\partial^{\operatorname{disc}}$ and discrete Laplacian $\Delta^{\operatorname{disc}}$ defined in \eqref{eq: discrete-SHE}, we additionally introduce a discretization of the non-linearity, $|\nabla^{\operatorname{disc}} f|^2(x)\coloneqq \sum_{y\sim x}(f(y)-f(x))^2$. Then
\begin{align*}
(\partial^{\operatorname{disc}}H_n(\cdot,x))(k)&=\log\Big(1+\frac{(\partial^{\operatorname{disc}}U_n(\cdot,x))(k)}{U_n(k,x)}\Big)\approx \frac{(\partial^{\operatorname{disc}}U_n(\cdot,x))(k)}{U_n(k,x)},\\
(\Delta^{\operatorname{disc}} H_n(k,\cdot))(x)&=\frac{1}{2d} \sum_{y\sim x} \log\Big(1+\frac{U_n(k,y)-U_n(k,x)}{U_n(k,x)}\Big)\approx \frac{(\Delta^{\operatorname{disc}}U_n(k,\cdot))(x)}{U_n(k,x)}{-}\frac 1{4d} \frac{|\nabla^{\operatorname{disc}}U_n(k,\cdot)|^2(x)}{U_n(k,x)^2},\\
|\nabla^{\operatorname{disc}} H_n(k,\cdot)|^2(x)&=\sum_{y\sim x}\log\Big(1+\frac{U_n(k,y)-U_n(k,x)}{U_n(k,x)}\Big)^2\approx \frac{|\nabla^{\operatorname{disc}}U_n(k,\cdot)|^2(x)}{U_n(k,x)^2},
\end{align*}
{where we have used the first-order Taylor approximation, $\log(1+u)\approx u$, in the first and third lines and the second-order Taylor approximation, $\log(1+u)\approx u- u^2/2$, in the second line. Together with \eqref{eq: discrete-SHE}, we obtain an equation that resembles \eqref{eq:KPZ} with an appropriate choice for $\gamma$.}

\smallskip {Note that in the above derivation, the use of Taylor's formula is not justified since the ``$u$'' in question is not actually small -- for example, $U_n(k,y)-U_n(k,x)$ is of order $O(1)$ even if $x$ and $y$ are close. Let us try to sketch how this might be circumvented. For $\delta\in(0,1)$ and $k\ll n$, we write $\tilde Z_n^{k,x}\coloneqq {\CO (Z_{n-k}\circ\theta_{k,x})/(Z_{n^{\delta}}\circ\theta_{k,x})}$ and decompose
\begin{align*}\label{justify}
(\partial^{\operatorname{disc}}H_n(\cdot,x))(k)=\log\left(1+\frac{\tilde Z_n^{k+1,x}-\tilde Z_n^{k,x}}{\tilde Z^{k,x}_n}\right)+\log Z_{n^{\delta}}\circ\theta_{k+1,x}-\log Z_{n^{\delta}}\circ\theta_{k,x}.
\end{align*}
Since $Z_n$ converges to a positive limit, we know that $\tilde Z_n^{k,x}$ is close to one (this is made precise in Proposition~\ref{prop:diff}) and thus Taylor's formula can be applied to the first term. On the other hand, the last two terms depend only on the environment close to $(k,x)$, and moreover they have the same law and a finite second moment. By the central limit theorem, we can expect that the contribution from these terms is of order $n^{-d/4}\ll n^{-\xi}$ when an average over the diffusive scale is taken, which is negligible. More work is required to make this argument rigorous -- in particular, it needs to be checked whether the application of Taylor's formula to $\tilde Z_n$ can be interpreted as a discretization of \eqref{eq:KPZ}. This goes beyond the scope of the current introduction and we leave it as an interesting question for future research. Let us mention, however, that the idea outlined here is the basis of our proof, see also the discussion after Corollary \ref{corrr}.}

\smallskip The following theorem summarizes the known results about the long-term behavior for the discretized KPZ equation:
\begin{thmx}\label{thmx:K_n}
For $f\in\mathcal C_c(\R^d)$, set 
\begin{equation}
K_n(f)\coloneqq n^{-d/2}\sum_{x\in\Z^d}f(x/\sqrt n) (\log Z_n^{x}-\mathbb{E}[\log Z_n]).\label{eq:def_Kn}
\end{equation}
\begin{enumerate}
    \item[(i)]In weak disorder, $\lim_{n\to\infty}K_n=0$ in probability.
    \item[(ii)]In $L^2$-regime, i.e., for $\beta<\beta_2$, $n^{\xi} K_n(f)$ converges in law as $n\to\infty$ to a centered Gaussian with {the same variance as in Theorem~\ref{thmx:EW}(ii)}.
\end{enumerate}
\end{thmx}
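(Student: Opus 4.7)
The plan is to deduce both parts of Theorem~\ref{thmx:K_n} from Theorem~\ref{thmx:EW} by showing that $K_n(f)$ and $S_n(f)$ differ by a quantity that is negligible at the scale $n^{-\xi}$. Since $\mathbb{E}[Z_n^x-1]=0$, one has
\[
K_n(f)-S_n(f)=n^{-d/2}\sum_{x\in\Z^d} f(x/\sqrt n)\bigl(R_n^x-\mathbb{E}[R_n^x]\bigr),\qquad R_n^x\coloneqq \log Z_n^x-(Z_n^x-1),
\]
so if I can establish $n^{\xi}(K_n(f)-S_n(f))\to 0$ in probability under \eqref{eq:assumption}, then Slutsky's theorem combined with Theorem~\ref{thmx:EW}(i) yields part (i) and combined with Theorem~\ref{thmx:EW}(ii) yields part (ii). The whole task reduces to bounding the variance of the right-hand side by $o(n^{-2\xi})$.

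To carry this out, I would fix small $\delta>0$ and factor $Z_n^x=Z_{n^\delta}^x\cdot\tilde Z_n^x$, where $\tilde Z_n^x$ is the conditional partition function built from the environment after time $n^\delta$. The factor $Z_{n^\delta}^x$ depends only on a local cylinder of space-time volume $O(n^{\delta(d+2)/2})$, while $\tilde Z_n^x=1+\eta_n^x$ with $\eta_n^x$ small in probability by the martingale convergence quantified in Proposition~\ref{prop:diff}. Taylor expansion in $\eta_n^x$ decomposes
\[
R_n^x= R_{n^\delta}^x+(1-Z_{n^\delta}^x)\eta_n^x-\tfrac12(\eta_n^x)^2+O\bigl((\eta_n^x)^3\bigr).
\]
The first, purely local term contributes to $K_n(f)-S_n(f)$ a variance of order $n^{-(1-\delta)d/2}$ by approximate independence across spatial cells of diameter $n^{\delta/2}$, and this is $o(n^{-2\xi})$ as soon as $\delta<1-4\xi/d$, which is admissible since $\xi<d/4$. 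The mixed term $(1-Z_{n^\delta}^x)\eta_n^x$ and the higher-order terms are handled via H\"older's inequality with exponent $p$ slightly above $1+2/d$: the local factor is controlled by its negative moments via the lower-tail bound \eqref{eq:newtail} of Theorem~\ref{thm: lower tail concentration}, while the $L^p$-decay of $\eta_n^x$ and of its powers is extracted from Theorem~\ref{thmx:local} applied to the pinned partition function representation underlying $\tilde Z_n^x$.

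The main obstacle is that \eqref{eq:assumption} only guarantees $\mathbb{E}[(Z_n^x)^{p}]<\infty$ for some $p$ slightly above $1+2/d<2$, so a direct estimate such as $\mathrm{Var}(R_n^x)\le\mathbb{E}[(Z_n^x-1)^4]$ is unavailable. The factorization through $Z_{n^\delta}^x$ is exactly what makes $\tilde Z_n^x$ genuinely close to $1$, so that Taylor's formula can legitimately be applied and only $p$-th moments with $p$ close to $1+2/d$ are required. A second delicate step will be the control of cross-covariances $\mathrm{Cov}(R_n^x,R_n^y)$ at diffusive separations $|x-y|\asymp\sqrt n$, where Theorem~\ref{thmx:local} is indispensable for estimating moments of pinned partition functions that appear once the two underlying walks are forced to share endpoints. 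Finally, the concentration property \eqref{eq:concprop}, through Theorem~\ref{thm: lower tail concentration}, is what ensures that the exceptional events on which $Z_{n^\delta}^x$ is very small contribute negligibly after centering.
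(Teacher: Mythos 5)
The paper does not actually prove Theorem~\ref{thmx:K_n}: it is a lettered theorem quoted from the literature. Part~(i) is attributed to the argument of \cite[Theorem C(i)]{J22} (a soft law-of-large-numbers argument based on the a.s.\ convergence of $\log Z_n^x$, requiring no rate), and part~(ii) is the main result of \cite{lygkonis2022edwards} (see also \cite{cosco2022law}). Your route --- deducing Theorem~\ref{thmx:K_n} from Theorem~\ref{thmx:EW} by showing $n^{\xi}(K_n(f)-S_n(f))\to0$ --- is therefore genuinely different, but it amounts to assuming the paper's main result, Theorem~\ref{thm:main}, as a lemma: that comparison is precisely the content of Sections 3--6, and your sketch of it is far too thin to count as a proof.

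Beyond scope, there are concrete gaps. First, your bound on the ``purely local'' term relies on $\mathrm{Var}(R_{n^\delta}^x)=O(1)$, but $R_{n^\delta}^x$ contains $Z_{n^\delta}^x-1$, and outside the $L^2$-regime $\mathbb{E}[Z_m^2]$ diverges exponentially in $m$; under \eqref{eq:assumption} alone one only controls $p$-th moments for some $p<2$. This is exactly why the paper never bounds the variance of the difference but instead splits $s_n^\delta$ and $k_n^\delta$ and estimates them separately (Burkholder's inequality with $p<2$ for the SHE part in Lemma~\ref{lem: martingale estimate until Ln}; the Doob decomposition and Proposition~\ref{prop: log correlation} for the KPZ part). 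Second, part~(i) is asserted in the \emph{whole} weak disorder regime, where \eqref{eq:assumption} may fail (at $\beta_c$ one has $\p=1+2/d$ and $\xi=0$) and where \eqref{eq:concprop} is not assumed; none of Theorem~\ref{thmx:local}, Theorem~\ref{thm: lower tail concentration} or Proposition~\ref{prop:diff} is then available, and a comparison at scale $n^{-\xi}=n^{0}$ would in any case not yield $K_n\to0$. Part~(i) requires the direct argument of \cite{J22}, not a perturbation of $S_n$. For part~(ii) alone, your plan is essentially the $L^2$-computation implicit in the cited works and could be completed there, but as written the proposal does not prove the theorem as stated.
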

Note that by Theorem~\ref{thm: lower tail concentration}, $\mathbb{E}[\log Z_n]$ converges to $\mathbb{E}[\log Z_\infty]>-\infty$ under the assumption \eqref{eq:concprop}. As far as we know, part (i) of the previous theorem is not explicitly stated in the literature, but it is clear that it can be proved by the same argument as in \cite[Theorem C(i)]{J22}. Part (ii) is proved in \cite{lygkonis2022edwards} {and in \cite{cosco2022law} in a related, continuous setting}. Note that a statement equivalent to Theorem~\ref{thmx:EW}(iii) is missing and will be proved as part of our main result. Finally, we point out that the random variables in Theorem~\ref{thmx:EW}(ii) and~\ref{thmx:K_n}(ii) converge to the same limits in law  -- as mentioned earlier, one interesting consequence of our result is that the two random variables are also close \emph{in {probability}}.

\subsection{The main result}
Assume $d\geq 3$. We are interested in the fluctuations $S_n(f)$ and $K_n(f)$ with initial condition $f\in\mathcal C_c(\R^d)$ defined in \eqref{eq:def_Sn} and \eqref{eq:def_Kn}. 
 
\smallskip The following is our main result:
\begin{thm}\label{thm:main}
Assume \eqref{eq:assumption} and \eqref{eq:concprop}. For every $f\in\mathcal C_c(\R^d)$ there exists $\eps{=\eps(\beta)}>0$ such that,
    \begin{align}\label{eq:main}
	\lim_{n\to\infty}\mathbb{P}\big(|\S_n(f)-\K_n(f)|\geq n^{-\xi-\eps}\big)=0.
\end{align}
\end{thm}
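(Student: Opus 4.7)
The plan is as follows. Fix a small $\delta\in(0,1)$, set $Z_{n^\delta}^x\coloneqq Z_{n^\delta}\circ\theta_{0,x}$ and $\tilde Z_n^x\coloneqq Z_n^x/Z_{n^\delta}^x$, so that $Z_n^x=\tilde Z_n^x\cdot Z_{n^\delta}^x$. The factor $Z_{n^\delta}^x$ depends only on the ``local'' environment in a slab of depth $n^\delta$, while $\tilde Z_n^x$ will be close to $1$ in a strong enough $L^p$ sense for $\delta$ small. Using $\log Z_n^x=\log\tilde Z_n^x+\log Z_{n^\delta}^x$ together with the identity $Z_n^x-1=(\tilde Z_n^x-1)(Z_{n^\delta}^x-1)+(\tilde Z_n^x-1)+(Z_{n^\delta}^x-1)$, the integrand $(Z_n^x-1)-(\log Z_n^x-\mathbb{E}[\log Z_n])$ of $\S_n(f)-\K_n(f)$ decomposes as $E_1^n(x)+E_2^n(x)+E_3^n(x)$ with
\begin{align*}
E_1^n(x)&\coloneqq(\tilde Z_n^x-1)(Z_{n^\delta}^x-1),\\
E_2^n(x)&\coloneqq(\tilde Z_n^x-1)-(\log\tilde Z_n^x-\mathbb{E}[\log\tilde Z_n^x]),\\
E_3^n(x)&\coloneqq(Z_{n^\delta}^x-1)-(\log Z_{n^\delta}^x-\mathbb{E}[\log Z_{n^\delta}]),
\end{align*}
so it suffices to show each averaged quantity $n^{-d/2}\sum_xf(x/\sqrt n)E_i^n(x)$ is $o(n^{-\xi})$ in probability.

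\smallskip For the local term $E_3^n$, writing $E_3^n(x)=\phi(Z_{n^\delta}^x)-\mathbb{E}[\phi(Z_{n^\delta}^x)]$ with $\phi(z)\coloneqq z-1-\log z$, the integrability of $Z_{n^\delta}^x$ from \eqref{eq:assumption} combined with the negative-moment bounds of Theorem~\ref{thm: lower tail concentration} (which in turn use \eqref{eq:concprop}) yields uniform $L^p$-bounds on $E_3^n(x)$ for some $p>1+2/d$. Since each $E_3^n(x)$ is, up to negligible events, measurable with respect to the environment in a slab of depth $n^\delta$ around $x$, these variables are approximately $O(\sqrt{n^\delta})$-range dependent, so a blocking argument combined with a von Bahr--Esseen type inequality delivers the desired $o(n^{-\xi})$ bound, provided $\delta$ is small enough in terms of $\p(\beta)$. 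The cross term $E_1^n$ is dispatched by H\"older, bounding $\mathbb{E}[|E_1^n(x)|^p]$ through $\|\tilde Z_n^x-1\|_{L^{2p}}\,\|Z_{n^\delta}^x-1\|_{L^{2p}}$ (or an asymmetric $L^q/L^r$ version when $L^{2p}$-moments of $Z_{n^\delta}^x$ are unavailable). For $E_2^n$, since $\tilde Z_n^x$ is close to $1$, Taylor's formula yields $E_2^n(x)=\tfrac12(\tilde Z_n^x-1)^2-\tfrac12\mathbb{E}[(\tilde Z_n^x-1)^2]+O(|\tilde Z_n^x-1|^3)$, and both the centered quadratic and the cubic remainder are controlled by the same quantitative $L^p$-bounds on $\tilde Z_n^x-1$.

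\smallskip The main obstacle is therefore the quantitative estimate $\|\tilde Z_n^x-1\|_{L^p(\mathbb{P})}=o(1)$ at a suitable rate as $n\to\infty$, for some $p$ strictly above $1+2/d$ (and up to roughly $2p$, to feed the H\"older and Taylor steps). The natural route is to write $\tilde Z_n^x-1=(Z_n^x-Z_{n^\delta}^x)/Z_{n^\delta}^x$, decompose the numerator via the Markov property at time $n^\delta$ as an $\F_{n^\delta}$-measurable polymer average of centered increments $(Z_{n-n^\delta}\circ\theta_{n^\delta,y}-1)$ --- which, thanks to the independence of the environment before and after time $n^\delta$, have $L^p$-norms controlled through Theorem~\ref{thmx:local} --- and to dispatch the denominator $(Z_{n^\delta}^x)^{-1}$ using Theorem~\ref{thm: lower tail concentration}. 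The slack $\p(\beta)>1+2/d$ guaranteed by \eqref{eq:assumption} is precisely what provides the small $\eps>0$ in \eqref{eq:main}, and the argument must finally be made uniform over $x$ with $|x|\leq C\sqrt n$ using the inverse-moment estimate \eqref{eq:union}.
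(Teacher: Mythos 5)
Your decomposition is natural and the third term $E_3^n$ is indeed (a relabelled version of) the paper's ``small-time'' contribution, but the proposal has a genuine gap centred on the cross term $E_1^n$ and, more generally, on the fact that per-site moment bounds can never beat the threshold $n^{-\xi}$. By Proposition~\ref{prop:diff}(i) (with $p$ close to $\p\wedge2$) and Theorem~\ref{thm: lower tail concentration}, the best available bound is $\|\tilde Z_n^x-1\|_{L^p}\lesssim n^{-\delta\xi+o(1)}$, and this rate is sharp (cf.\ Theorem~\ref{thmx:EW}(iii)); note in passing that the closeness of $\tilde Z_n^x$ to $1$ therefore \emph{improves} as $\delta\uparrow1$, not ``for $\delta$ small'' as you write. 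Consequently any term-by-term H\"older bound on $E_1^n(x)=(\tilde Z_n^x-1)(Z_{n^\delta}^x-1)$ yields at best $n^{-\delta\xi}$, which is never $o(n^{-\xi})$ for $\delta<1$. Worse, outside the $L^2$-regime one has $\p\wedge2\leq 2$, so any H\"older split $1/p=1/q+1/r$ with both factors in $L^q,L^r$, $q,r<\p\wedge2$, forces $p<1$; with only sub-$L^1$ control and no independence across $x$ (all the $\tilde Z_n^x$ share the late-time environment), there is no standard moment inequality that extracts extra decay from the spatial average. The only way to make $\mathrm{avg}_x(E_1^n)$ small is to condition on $\F_{n^\delta}$, expand $\tilde Z_n^x-1$ as a martingale in the late noise via Burkholder, and use that the resulting coefficients involve diffusive spatial averages of $Z_{n^\delta}^x-1$, which homogenize (Lemma~\ref{lem: martingale estimate until Ln}). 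That is precisely the content of the paper's approximation of $S_n^\delta(f)$ by $M_n^\delta(f)$ in Section~\ref{sec:she}, and it is not ``dispatched by H\"older.''

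There is a second, quantitative obstruction: your parameter $\delta$ is pulled in opposite directions. The per-site Taylor step for $E_2^n$ produces the remainder $\phi(\tilde Z_n^x)=\tilde Z_n^x-1-\log\tilde Z_n^x\geq0$, whose mean is (by Propositions~\ref{prop:log} and~\ref{prop:alpha2}) of order $\sum_{k>n^\delta}\sum_y\mathbb{E}[\alpha_k(y)^2]\approx n^{-\delta(\frac d2((\p\wedge2)-1)-1)}=n^{-\delta(\p\wedge2)\xi}$; this beats $n^{-\xi}$ only if $\delta>1/(\p\wedge2)\geq1/2$ (this is why the paper cuts at $n^{1-\delta}$ with $1-\delta>\s$, see \eqref{eq:partial_sum}). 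But then $E_3^n(x)$ involves the environment up to time $n^{\delta}$ with $\delta>1/2$, and the blocking/von Bahr--Esseen argument no longer closes: the blocks have $\sim n^{d\delta/2}$ sites each and are controlled only by the lossy triangle inequality, and the count shows the scheme requires roughly $\delta<1/(d((\p\wedge2)-1))<1/2$. At that point $E_3^n$ \emph{is} Proposition~\ref{prop: key claims}(i), whose KPZ half is the hardest part of the paper (the previsible part $\Dprev_k$ is a centered sum of nonnegative overlaps, and its spatial average requires the decorrelation-in-time estimate of Lemma~\ref{lem: midterm}, not an i.i.d.-type inequality). Finally, the displayed Taylor expansion of $E_2^n$ is ill-posed as written: outside the $L^2$-regime $\mathbb{E}[(\tilde Z_n^x-1)^2]$ is finite for each $n$ but diverges (possibly exponentially) as $n\to\infty$, so the ``centered quadratic'' is not controlled by $L^p$ bounds with $p<2$; one must instead argue as in Proposition~\ref{prop:log}, splitting according to whether $\tilde Z_n^x$ is close to $0$ and using only the overlap $\sum_y\alpha_k(x,y)^2$.
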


\begin{rem}
  The value of $\eps(\beta)$ in \eqref{eq:main} could, in principle, be made explicit. Since there is no reason to believe that our methods yield the best-possible bound,  we did not optimize our argument and instead focused on keeping the proof concise.
\end{rem}
\begin{remark}
{\CO Part of our argument, namely the approximation of $K_n(f)$, extends beyond the logarithmic case and covers general nonlinear transformations $F(u)$ satisfying the conditions $F(1)=1$, $F'(u)>0$, and $F''(u)<0$, similar to the analysis in the $L^2$-phase in \cite{dunlap2020fluctuations, nakajima2023fluctuations}. More precisely, from the derivation in \cite[Section~3.1]{nakajima2023fluctuations} we expect that $F(Z_n^x)-\E[F(Z_n)]$ can be approximated by 
\begin{align*}
\tilde M_n^\delta(x)=\sum_{k\in\llbracket n^{1-\delta}+1,n\rrbracket} F'(Z_n^x)Z_{\ell_n}^x \cev Z^{k,y}_{[k-\ell_n,k)} E_{k,y}p_k(x,y),
\end{align*}
where $1\ll\ell_n\ll n$ is some intermediate scale. In the logarithmic case $F(u)=\log u$, the term $F'(Z_n^x)Z_{\ell_n}^x$ %is equal to $Z_{\ell_n}^x/Z_n^x$ and
converges to one. Hence, $\tilde M_n^\delta(x)$ asymptotically agrees with the corresponding term $M_n^\delta(x)$ in the approximation of $S_n(f)$, see \eqref{eq:M_n} below. In the general case, we obtain that 
\begin{align*}
\tilde K_n(f):=n^{-d/2}\sum_xf(x/\sqrt n) \big(F(Z_n^x)-\E[F(Z_n)]\big)
\end{align*} is approximated by $n^{-d/2}\sum_x f(x/\sqrt n) \tilde M_n^\delta(x)$, but beyond the $L^2$-phase the fluctuations of that expression have been computed only in the logarithmic case, see Theorem~\ref{thmx:EW}(iii). Thus, further research is needed to generalize the result in \cite{J22} in order to determine the fluctuations of $\tilde K_n(f)$ with our methods.}
\end{remark}

From Theorem~\ref{thmx:EW}(iii), we conclude the following:
\begin{cor}\label{corrr}
Assume \eqref{eq:assumption} and \eqref{eq:concprop}. If $f\not\equiv0$, then for any $\eps>0$,
\al{
\lim_{n\to\infty}\mathbb{P}\big(n^{-\xi-\eps}\leq |\K_n(f)|\leq n^{-\xi+\eps}\big)= 1.
}
Moreover, there exists $\eps_0>0$ such that,
$$\lim_{n\to\infty}\mathbb{P}\left(\frac{|\S_n(f)-\K_n(f)|}{\min\{|\S_n(f)|,|\K_n(f)|\}}\leq n^{-\eps_0}\right)=1.$$
\end{cor}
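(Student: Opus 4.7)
The plan is to deduce both assertions from Theorem~\ref{thm:main} and Theorem~\ref{thmx:EW}(iii) via elementary triangle-inequality arguments. Let $\eps_0>0$ denote the constant supplied by Theorem~\ref{thm:main}, so that $|\S_n(f)-\K_n(f)|\leq n^{-\xi-\eps_0}$ holds with probability tending to one. Throughout, I would work on the good event on which this bound and the corresponding estimates coming from Theorem~\ref{thmx:EW}(iii) hold simultaneously, which still has probability tending to one by a union bound.

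For the two-sided bound on $|\K_n(f)|$, fix an arbitrary $\eps>0$ and set $\eps':=\tfrac{1}{2}\min(\eps,\eps_0)$. Theorem~\ref{thmx:EW}(iii) applied with exponent $\eps'$ gives $n^{-\xi-\eps'}\leq |\S_n(f)|\leq n^{-\xi+\eps'}$ with high probability. The upper bound then follows from the triangle inequality,
\begin{align*}
|\K_n(f)|\leq |\S_n(f)|+|\S_n(f)-\K_n(f)|\leq n^{-\xi+\eps'}+n^{-\xi-\eps_0}\leq n^{-\xi+\eps}
\end{align*}
for all $n$ large, since $\eps'<\eps$. For the lower bound, the reverse triangle inequality yields
\begin{align*}
|\K_n(f)|\geq |\S_n(f)|-|\S_n(f)-\K_n(f)|\geq n^{-\xi-\eps'}\bigl(1-n^{\eps'-\eps_0}\bigr),
\end{align*}
and since $\eps'\leq \eps_0/2$ the bracketed factor exceeds $1/2$ eventually, so $|\K_n(f)|\geq \tfrac{1}{2}n^{-\xi-\eps'}\geq n^{-\xi-\eps}$ for $n$ large (using $\eps'<\eps$ once more to absorb the constant).

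For the ratio bound, I would specialize the previous step to, say, $\eps=\eps_0/4$, obtaining $\min(|\S_n(f)|,|\K_n(f)|)\geq n^{-\xi-\eps_0/4}$ with high probability. Dividing the estimate $|\S_n(f)-\K_n(f)|\leq n^{-\xi-\eps_0}$ by this quantity gives
\begin{align*}
\frac{|\S_n(f)-\K_n(f)|}{\min(|\S_n(f)|,|\K_n(f)|)}\leq n^{-3\eps_0/4}
\end{align*}
with high probability, so any $\eps_0^*\in(0,3\eps_0/4)$ works. The whole argument is a book-keeping consequence of the two cited inputs, and I do not expect any substantial obstacle; the only subtlety is to calibrate the auxiliary exponents so that the correction $|\S_n-\K_n|$ remains parametrically smaller than the lower bound on $|\S_n|$, which is automatic because Theorem~\ref{thm:main} gives a polynomial gain $n^{-\eps_0}$ beyond the diffusive decay rate $n^{-\xi}$.
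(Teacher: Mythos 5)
Your proposal is correct and is precisely the argument the paper intends: the corollary is stated there without proof as an immediate consequence of Theorem~\ref{thm:main} and Theorem~\ref{thmx:EW}(iii), and your triangle-inequality bookkeeping (with the calibration $\eps'<\eps_0$ so that the error $|\S_n(f)-\K_n(f)|$ is dominated by the lower bound on $|\S_n(f)|$) fills in exactly the omitted details.
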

We also note that our result implies that $K_n$ has the same scaling limit as $S_n$, even though the existence of such a scaling limit is not known at the moment for $\beta\geq \beta_2$.
\begin{cor}
Under the assumptions of Theorem~\ref{thm:main}, assume that $(n_k)_{k\in\N}$ and $(a_k)_{k\in\N}$ are such that $a_kS_{n_k}(f)$ converges in distribution as $k\to\infty$. Then $a_kK_{n_k}(f)$ converges to the same limit in distribution.
\end{cor}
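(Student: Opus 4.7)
The plan is a direct application of Slutsky's theorem. Decomposing
\begin{align*}
a_k \K_{n_k}(f) = a_k \S_{n_k}(f) + a_k (\K_{n_k}(f) - \S_{n_k}(f)),
\end{align*}
and noting that the first term converges in distribution to the target limit by hypothesis, it suffices to prove that the remainder tends to zero in probability. The case $f \equiv 0$ is trivial since then $\S_n(f) = \K_n(f) = 0$, so from here on assume $f \not\equiv 0$.

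For the remainder, the cleanest route is to invoke the second part of Corollary~\ref{corrr}, which provides an $\eps_0 > 0$ such that
\begin{align*}
|\S_n(f) - \K_n(f)| \leq n^{-\eps_0} |\S_n(f)|
\end{align*}
with probability tending to one. Multiplying by $|a_k|$ yields $|a_k(\K_{n_k}(f) - \S_{n_k}(f))| \leq n_k^{-\eps_0} |a_k \S_{n_k}(f)|$ on this high-probability event. Since $a_k \S_{n_k}(f)$ converges in distribution, the sequence $|a_k \S_{n_k}(f)|$ is tight and hence bounded in probability; multiplication by the deterministic null sequence $n_k^{-\eps_0}$ then forces the product to zero in probability, which via Slutsky's theorem completes the proof.

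There is no genuine obstacle beyond the previously established Corollary~\ref{corrr}; the entire argument is a soft Slutsky-type manipulation on top of the main theorem. If one preferred to bypass the corollary and argue directly from Theorem~\ref{thm:main} and Theorem~\ref{thmx:EW}(iii), essentially the same proof goes through: tightness of $a_k \S_{n_k}(f)$ combined with the lower bound $|\S_{n_k}(f)| \geq n_k^{-\xi - \eps'}$ (valid with probability tending to one since $f \not\equiv 0$) forces the deterministic inequality $|a_k| \leq C n_k^{\xi + \eps'}$ for all large $k$, after which Theorem~\ref{thm:main} bounds $|a_k(\K_{n_k}(f) - \S_{n_k}(f))|$ by $C n_k^{\eps' - \eps}$, which tends to zero provided $\eps'$ is chosen smaller than the $\eps$ of Theorem~\ref{thm:main}.
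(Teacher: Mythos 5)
Your proof is correct and is precisely the routine Slutsky-type argument that the paper leaves implicit (the corollary is stated there without proof as an immediate consequence of Theorem~\ref{thm:main} and Theorem~\ref{thmx:EW}(iii)). Both of your routes work; the only unstated hypothesis you rely on is that $n_k\to\infty$, which is clearly intended in the statement.
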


{Let us explain why this} corollary is surprising: Note that, since the second moment of $Z_n^\beta$ diverges exponentially fast for $\beta>\beta_2$, {one naturally expects} that the scaling limit of $S_n(f)$ {is non-}Gaussian, and a close examination of the proof of \cite[Theorem 1.1]{J22} suggests that the scaling limit of $S_n(f)$ should be a stable random variable in the regime $\beta\in (\beta_2,\beta_c)$. On the other hand, one would naturally conjecture that a scaling limit of $K_n(f)$ is Gaussian for the following reason: Observe that \( K_n(f) \) is a sum of identically distributed terms \( K_n(x) \) possessing all moments (see Proposition~\ref{prop: log correlation}). Moreover, \( K_n(x) \) and \( K_n(y) \) are asymptotically independent when the distance between \( x \) and \( y \) is sufficiently large. More precisely, since $K_n(x)$ converges almost surely for fixed $x$, we get $K_n(x)\approx K_{\ell_n}(x)$ for any $\ell_n\to \infty$. On the other hand, $K_{\ell_n}(x)$ and $K_{\ell_n}(y)$ are independent if $|x-y|_{\infty}>2\ell_n$. Our intuition based on the central limit theorem thus suggests that \( K_n(f) \) converges to a Gaussian limit in the whole weak disorder regime.

\smallskip Of course, this reasoning is not rigorous because one would need the approximation $K_n(x)\approx K_{\ell_n}(x)$ to hold uniformly in $x\in[-n^{1/2},n^{1/2}]^d$, and this is not true. In fact, we will find that, with $\ell_n=n^{1-o(1)}$, the contribution from averaging $K_{\ell_n}(x)$ is negligible compared to that of the approximation error $K_n(x)-K_{\ell_n}(x)$ (see Proposition~\ref{prop: key claims}), which leads to a decay rate $K_n(f)\approx n^{-\xi(\beta)+o(1)}$ strictly slower than what is observed in the Gaussian regime in Theorem~\ref{thmx:K_n}(ii).

\smallskip {We finish the discussion of our result by commenting on the difficulties arising from going beyond the $L^2$-regime from Theorems~\ref{thmx:EW}(ii) and \ref{thmx:K_n}(ii). The approach from \cite{lygkonis2022edwards} relies on an $L^2$-decomposition of the partition function, which does not converge for $\beta_2$. Moreover,} in the \( L^2 \)-regime for {a related, continuous space-time model} \cite{cosco2022law}, the proof uses an $L^2$-approximation of solutions from the  SHE to the KPZ equation, though this is not explicitly stated in their results. The main difficulty in extending their approach  beyond the \( L^2 \)-regime is that the covariance computation cannot be applied. Instead, we employ Burkholder's inequality (Lemma~\ref{lem:bh}) in several key steps{, which can be considered a conditional version of the $L^2$-computations and allows us to perform $L^p$-calculations, with suitable $p<2$.}

\subsection{Discussion of the SHE and KPZ equation}

Let us comment a bit more on the current knowledge about equations \eqref{eq:KPZ} and \eqref{eq:SHE}, and how our results {extend} the picture.

\smallskip First, we note that the success of the analysis of \eqref{eq:KPZ} and \eqref{eq:SHE} in \( d=1 \) is due to the breakthrough techniques, such as the theory of regularity structures \cite{hairer2014theory} introduced by Hairer and the paracontrolled calculus \cite{gubinelli2015paracontrolled} developed by Gubinelli, Imkeller, and Perkowski. These approaches have provided robust tools for handling the singularities that appear in the KPZ equation, making it possible to rigorously define and analyze its solutions to general stochastic partial differential equations (SPDE)  in low dimensions. It is worth noting that recent advances in integrable systems have enabled the derivation of explicit formulas for the one-dimensional KPZ equation \cite{sasamoto2010one,amir2011probability}. 

\smallskip Dimension $d=2$ is called critical while dimensions $d \geq 3$ are called supercritical. Here, the techniques mentioned before are ineffective and advanced renormalization techniques are needed to make sense of the solutions (see discussions in \cite{chatterjee2020constructing,magnen2018scaling}). In dimension two, early work \cite{chatterjee2020constructing} considered the renormalization with respect to the coefficient  \(\gamma\), i.e., the noise intensity \(\gamma = \gamma_\varepsilon\) scales as \(\gamma_\varepsilon = \hat{\gamma}/\sqrt{\log \varepsilon^{-1}}\), to properly handle the singularities arising from the regularization of the noise \(\dot{W}\), where the original noise is replaced by a smooth approximation \(\dot{W}^{\varepsilon}\) with parameter $\varepsilon>0$ (note that such reparametrizations were already considered in the one-dimensional KPZ equation, though with a different scaling, in \cite{AKQ14}). Notably, Caravenna, Sun, and Zygouras \cite{CSZ20}  showed that under certain small-noise conditions \(\hat{\gamma} < \hat{\gamma}_c\), referred to as the subcritical regime, the rescaled random fields associated with the SHE and KPZ equation converge to the Gaussian Free Field (GFF), corresponding to the Edwards-Wilkinson (EW) universality class---a linear stochastic partial differential equation that describes surface growth in the absence of nonlinear effects and serves as a baseline for understanding fluctuations in growth models (see also \cite{gu2020gaussian,dunlap2022forward,nakajima2023fluctuations,dunlap20232d} for related studies and its generalization). At criticality, i.e.,  $\hat{\gamma}=\hat{\gamma}_c$,  it was found that the  solutions to the SHE exhibit a universal random measure with logarithmic correlations  \cite{bertini1998two,caravenna2019moments,gu2021moments,caravenna2023critical}. {We also remark} that  a discrete analogue to  the SHE, {equivalent to the discretization introduced above}, is considered in \cite{CSZ20,caravenna2019moments,gu2021moments,caravenna2023critical,lygkonis2022edwards}. This limiting process, termed the { Critical 2D Stochastic Heat Flow}, provides a rigorous solution to the long-standing problem of defining solutions to the two-dimensional SHE  at criticality. However, despite these advances for the SHE, the problem of defining and analyzing solutions to the two-dimensional KPZ equation at criticality remains open and is a subject of ongoing research.

\smallskip For dimensions \( d \geq 3 \), the KPZ equation enters a supercritical regime in SPDE theory, where the solution theory becomes even more delicate. Early results focused on the the weak disorder regime, where the noise intensity is small enough for the system to exhibit diffusive behavior. Polymer representations and martingale methods were developed to describe the behavior of the SHE and KPZ  equation in {this regime}. Significant progress was made in understanding the law of large numbers and the role of the critical parameter \( \gamma \), where the system transitions from weak to strong disorder. In particular, the recent work \cite{mukherjee2016weak} introduced new tools from the theory of directed polymers to handle the higher-dimensional KPZ equation, where the noise intensity is scaled as $\gamma_\varepsilon = \varepsilon^{(d-2)/2} \hat{\gamma}$. Recent works \cite{magnen2018scaling,gu2018edwards,dunlap2020fluctuations,cosco2022law,lygkonis2022edwards} showed that certain solutions to the SHE and KPZ  equation with regularized noise converge to the solutions to Edwards-Wilkinson (EW) type equations, i.e., the stochastic heat equation with additive noise,  in certain ranges of noise intensities, called $L^2$-regime, where the regularized solutions possess uniform $L^2$ moments. This \(L^2\)-regime corresponds to the regime \(\beta < \beta_2\) discussed in the {previous section}. Research on the KPZ equation in \( d \geq 3 \) is still developing, with a significant focus on the \( L^2 \)-regime, {and} relatively few results are available outside the \( L^2 \)-regime. 

\subsection{Further notation}
Let $p_n(x)=P(X_n=x)$ denote the heat kernel of the simple random walk starting at $0$. Given $I\subseteq (0,\infty)$, the restricted Hamiltonian is defined by $H_I(\omega,X)\coloneqq \sum_{i\in I\cap \Z}\omega_{i,X_i}$, and we write $Z_I^{s,x}\coloneqq E^{s,x}[e^{\beta H_I(\omega,X)-|I\cap \Z|\lambda(\beta)}]$ for $I\subseteq (t,\infty)$, where $E^{s,x}$ is the expectation with respect to the simple random walk starting at $(s,x)$. We set $Z_I \coloneqq Z_I^{0,0}$.

\smallskip {In addition to }the \emph{point-to-plane} partition function $Z_n^{t,x}$ with arbitrary starting point, we also define the \emph{plane-to-point} partition function, or \emph{reverse partition function}, by $\cev Z_I^{t,y}\coloneqq \cev E^{t,y}[e^{\beta H_I(\omega,X)-|I\cap \Z|\lambda(\beta)}]$, where $I\subseteq (-\infty,t)$ and $\cev E^{t,x}$ is the expectation with respect to the time-reversed random walk with endpoint $(t,y)$,
\begin{align*}
\cev P^{t,y}((X_k)_{k=0,\dots,t}\in\cdot)\coloneqq P((X_k-X_t+y)_{k=0,\dots,t}\in \cdot).
\end{align*}
{Recalling also the \emph{point-to-point} partition function introduced before Theorem~\ref{thmx:local}}, we define $Z_{I}^{s,x;t,y}\coloneqq E^{s,x;t,y}[e^{\beta H_I(\omega,X)-|I\cap \Z|\lambda(\beta)}]$ for $I\subseteq [s,t]$. {The transition probability of the random walk bridge is denoted by} $p^{s,x;t,y}_k(z) \coloneqq  p_{k-s}(x)p_{t-k}(z-y)/p_{t-s}(y)=P^{s,x;t,y}(X_k=z)$.

\smallskip If $g$ is measurable with respect to  the filtration of the random walk $X$, we define 
\al{
Z_I^{s,x}[g]&\coloneqq E^{s,x}[e^{\beta H_I(\omega, X)-|I\cap \Z|\lambda(\beta)}g(X)],\\
\cev Z_I^{t,y}[g]&\coloneqq \cev E^{t,y}[e^{\beta H_I(\omega, X)-|I\cap \Z|\lambda(\beta)}g(X)],\\
Z_{I}^{s,x;t,y}[g]&\coloneqq E^{s,x;t,y}[e^{\beta H_I(\omega, X)-|I\cap \Z|\lambda(\beta)}g(X)].
}
For simplicity of notation, we write $Z_n^x[g]\coloneqq Z_{(0,n]}^{0,x}[g]$.

Another notation that will appear frequently is the probability measure
\begin{align}\label{eq:def_alpha}
\alpha_n(x,y)\coloneqq  \frac{Z_{n-1}^x[\1{X_n=y}]}{Z_{n-1}^x}.
\end{align}
Note that the time-horizon of the environment is $n-1$ while the position of the random walk is evaluated at time $n$. The reason for this asymmetry is that $\alpha_n$ appears in decomposition of $\log Z_n$ into a martingale part and a previsible part, hence it is natural that we want $\alpha_n$ to be $\F_{n-1}$-measurable.

\smallskip Throughout the paper, we use \(C\) to denote a large positive constant and \(c > 0\) to denote a small positive constant. Note that these constants may change from line to line. {They always depend on} the function $f$, the dimension \(d\) and the distribution of $\omega$, {in addition to any parameters (such as $p$ and $M$) mentioned in the statements}. We use $\llbracket a,b\rrbracket\coloneqq [a,b]\cap\Z$ to denote discrete intervals, and if it is clear from context, we write $n^a$ for $\lfloor n^a\rfloor$.

\subsection{Structure of the approximation}
For $f\in\mathcal C_c(\R^d)$ and $\delta\in(0,{1/6})$, let
\begin{align*}
    s_n^\delta(f)&\coloneqq n^{-d/2}\sum_{x\in \Z^d} f(x/\sqrt n)(Z_{n^{1-\delta}}^x-1),\\ 
	S_n^\delta(f)&\coloneqq n^{-d/2}\sum_{x\in \Z^d} f(x/\sqrt n)(Z_n^x-Z_{n^{1-\delta}}^x),\\
	k_n^{\delta}(f)&\coloneqq n^{-d/2}\sum_{x\in\Z^d}f(x/\sqrt n)\Big(\log Z_{n^{1-\delta}}^x-\mathbb{E}[\log Z_{n^{1-\delta}}]\Big),\\
 	K_n^{\delta}(x)&\coloneqq \log\frac{Z_n^x}{Z_{n^{1-\delta}}^x}-\E\Big[\log\frac{Z_n}{Z_{n^{1-\delta}}}\Big].
\end{align*}
In words, $s_n(f)$ and $k_n(f)$ denote the contribution from the environment close to the initial time, i.e., up to time $n^{1-\delta}$, which will be negligible for any $\delta>0$ {(see {Proposition~\ref{prop: key claims}(i)} below}). In order to prove Theorem~\ref{thm:main}, we need to show that a suitable spatial average of $K_n^\delta(x)$ and $S_n^\delta(f)$ are arbitrarily close if $\delta>0$ is small. We do this by approximating both quantities by the martingale {$M_n^{\delta}(f)\coloneqq n^{-d/2}\sum_{x\in \Z^d}f(x/\sqrt n) M_n^{\delta}(x)$, where}
\begin{align}\label{eq:M_n}
    M_n^{\delta}(x)&\coloneqq \sum_{k\in \llbracket n^{1-\delta}+1,n\rrbracket} \sum_{y\in\Z^d} \cev Z_{[k-k^{1/8},k)}^{k,y} E_{k,y}p_k(x,y),
\end{align}
and where the exponentiated, re-centered environment at $(k,y)$ is given by
\begin{align}\label{Def: Eky}
    E_{k,y}&\coloneqq e^{\beta\omega_{k,y}-\lambda(\beta)}-1.
\end{align}
\begin{prop}\label{prop: key claims}
Assume \eqref{eq:assumption} and \eqref{eq:concprop}.
\begin{enumerate}
	\item[(i)] For any $\delta\in(0,1/6)$, there exists ${\CO \eps_1=\eps_1(\delta)}>0$ such that, for any $f\in\mathcal C_c(\R^d)$ there exists $C>0$ such that, {for any $n\in\N$}, 
		\begin{align}
            \E|s_n^\delta(f)|&\leq Cn^{-\xi-{\CO \eps_1} },\label{eq:SHE_small}\\
			\E| k_n^{\delta}(f)|&\leq Cn^{-\xi-{\CO \eps_1}}.\label{eq:KPZ_small}
		\end{align}
	\item[(ii)] There exists ${\CO \eps_2}>0$ such that, for any $f\in\mathcal C_c(\R^d)$ and $\delta\in(0,1/6)$, there exists $C>0$ such that, {for any $n\in\N$},
\begin{align}
	\E| S_n^{\delta}(f)-M_n^{\delta}(f) |\leq Cn^{-\xi-{\CO \eps_2}}.\label{eq:SHE_large}
\end{align}
\item[(iii)]There exist ${\CO \eps_3}>0$, $\delta\in (0,1/6)$ and  $C>0$ such that, {for any $n\in\N$ and} $x\in\Z^d$, 
	\begin{align}
	\E| K_n^\delta(x)-M_n^\delta(x) |\leq Cn^{-\xi-{\CO \eps_3}}.\label{eq:KPZ_large}
	\end{align}
\end{enumerate}
\end{prop}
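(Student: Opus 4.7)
My plan is to compare $K_n^\delta(x)$ with $M_n^\delta(x)$ via a discrete Taylor expansion of the logarithm, identifying the leading martingale part and bounding the resulting error terms in $L^q$ for some $q>1$ using Burkholder's inequality, the local limit theorem (Theorem~\ref{thmx:local}), and the negative-moment control from Theorem~\ref{thm: lower tail concentration}. By shift-invariance of the environment the bound is uniform in $x$, so I may take $x=0$.

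Using the one-step recursion $Z_k^x=Z_{k-1}^x(1+D_k(x))$ with $D_k(x)\coloneqq\sum_y\alpha_k(x,y)\,E_{k,y}$, I would telescope
$$\log\frac{Z_n^x}{Z_{n^{1-\delta}}^x}=\sum_{k=n^{1-\delta}+1}^n\log(1+D_k(x))$$
and apply Taylor's formula $\log(1+u)=u-u^2/2+O(|u|^3)$ on $\{|D_k|\leq 1/2\}$; the complementary event has polynomially small probability by Theorem~\ref{thm: lower tail concentration} together with $L^p$ bounds on $Z_n$. Since $\alpha_k(x,\cdot)$ is $\F_{k-1}$-measurable while $E_{k,\cdot}$ is centered and $\F_{k-1}$-independent, $\E[D_k\mid\F_{k-1}]=0$ and the sum of the linear terms forms a martingale. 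After subtracting $\E[\log(Z_n/Z_{n^{1-\delta}})]$, the expansion yields
$$K_n^\delta(x)=\sum_k D_k(x)-\tfrac12\sum_k\bigl(D_k(x)^2-\E[D_k(x)^2]\bigr)+\text{cubic remainder}.$$
The cubic remainder is $\sum_k O(|D_k|^3)$, which I would show to be $o(n^{-\xi})$ by combining the moment bound for the polymer overlap $\sum_y\alpha_k(x,y)^2$ (derived from Theorem~\ref{thmx:local}) with Rosenthal's inequality applied to $D_k$ conditionally on $\F_{k-1}$. The centered quadratic fluctuation $\sum_k(D_k^2-\E[D_k^2])$ can be split into the diagonal and off-diagonal parts of the double sum over $y,y'$; the off-diagonal part is itself a martingale in $\F_k$ that is bounded by Burkholder's inequality, while the diagonal part is controlled by a second-level $L^q$ bound on the overlap.

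Next, I would replace the leading martingale $\sum_k D_k(x)$ by $M_n^\delta(x)=\sum_k\tilde D_k(x)$, where $\tilde D_k(x)\coloneqq\sum_y p_k(x,y)\,\cev Z_{[k-k^{1/8},k)}^{k,y}\,E_{k,y}$. Writing
$$\alpha_k(x,y)=\frac{p_k(x,y)\,Z_{(0,k-1]}^{0,x;k,y}}{Z_{k-1}^x},$$
I would apply the Markov property at time $k-k^{1/8}$ to factor $Z_{(0,k-1]}^{0,x;k,y}$ as a sum over intermediate positions $(k-k^{1/8},z)$ of a ``past'' bridge and a ``future'' reverse bridge $Z_{[k-k^{1/8},k)}^{k-k^{1/8},z;k,y}$. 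The future reverse bridge is then replaced by $\cev Z_{[k-k^{1/8},k)}^{k,y}$, since the polymer on an interval of length $k^{1/8}$ forgets its starting point on the diffusive scale $\sqrt{k^{1/8}}\ll\sqrt{k}$; this is made quantitative using Theorem~\ref{thmx:local} to control the $L^q$-norms of the bridge partition functions uniformly in $z$ in the relevant range, together with a local smoothness estimate in the starting point $z$. The remaining past quotient, summed over $z$ and divided by $Z_{k-1}^x$, is close to one by the weak-disorder convergence $Z_n\to Z_\infty$, with uniform $L^q$ control guaranteed by Theorem~\ref{thm: lower tail concentration}. Finally, applying Burkholder's inequality to the martingale $\sum_k(D_k-\tilde D_k)$ in the filtration $(\F_k)$ gives
$$\E\Big|\sum_k\bigl(D_k(x)-\tilde D_k(x)\bigr)\Big|^q\leq C\sum_k\E\bigl|D_k(x)-\tilde D_k(x)\bigr|^q,$$
and an optimized choice of the exponent $q\in(1,\p(\beta)\wedge 2)$ together with the intermediate scale $k^{1/8}$ yields the desired rate $n^{-\xi-\eps_3}$.

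The hard part is the localization step above: outside the $L^2$-regime one cannot use the direct covariance computations of \cite{lygkonis2022edwards,cosco2022law}, so the entire argument has to be carried out in $L^q$ for $q$ only slightly above $1$, relying on Theorem~\ref{thmx:local} for uniform $L^q$-bounds on pinned partition functions in a diffusive range of endpoints and on Theorem~\ref{thm: lower tail concentration} for negative moments of $Z_{k-1}^x$. Matching the approximation error to the fine scale $n^{-\xi-\eps_3}$ forces a delicate quantitative tuning of the scale $k^{1/8}$ and the exponent $q$, and the whole proof reduces to a careful combination of Burkholder's inequality with the polymer moment estimates.
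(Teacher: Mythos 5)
Your outline addresses only part (iii) of the proposition, and for that part it is essentially the paper's own route: the logarithm is expanded so that the leading term is the martingale $\widehat K_n(x)-\widehat K_{n^{1-\delta}}(x)=\sum_k\sum_y\alpha_k(x,y)E_{k,y}$, the expansion error is controlled by the overlap $\sum_y\alpha_k(x,y)^2$ (the paper does this in one stroke via Proposition~\ref{prop:log} and \eqref{eq:partial_sum}, so your separate treatment of the centered quadratic term and the cubic remainder is more elaborate than necessary but not wrong), and then $\alpha_k(x,y)$ is replaced by $p_k(x,y)\cev Z^{k,y}_{[k-k^{1/8},k)}$ using the local limit theorem, the negative-moment bounds, and Burkholder's inequality, exactly as in Lemma~\ref{lem:ratio} and the proof of \eqref{eq:KPZ_large}.

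The genuine gap is that parts (i) and (ii) are not addressed at all, and these contain most of the work. Part (ii) requires showing that $S_n^\delta(f)-M_n^\delta(f)$ is small, which is a separate argument (Section~\ref{sec:she}) replacing the point-to-point partition function $Z^{0,x;k,y}_{(0,k)}$ in the chaos expansion of $Z_n^x-Z^x_{n^{1-\delta}}$ by $\cev Z^{k,y}_{[k-k^{1/8},k)}$, and it exploits the spatial average over $x$ in an essential way (the smoothness of $f$ and the ratio estimate \eqref{eq:ratio_srw}); your pointwise-in-$x$ argument for (iii) does not yield it. More seriously, the bound \eqref{eq:KPZ_small} on $k_n^\delta(f)$ is the most delicate part of the whole paper: because the time horizon $n^{1-\delta}$ is short, one cannot absorb the nonlinear error into $\sum_k\sum_y\E[\alpha_k(y)^2]$ (that sum is \emph{not} $o(n^{-\xi})$ for $k$ near $n^{s^*}$), so the paper decomposes $\log Z^x_k-\log Z^x_{k-1}$ into a martingale part and a previsible part, proves decorrelation of these increments in the starting point $x$ (Proposition~\ref{prop: log correlation}), and splits the previsible contribution into three time regimes, with the intermediate regime requiring the cross-time correlation bound of Lemma~\ref{lem: midterm}. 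None of this machinery appears in your proposal, and the Taylor-expansion-plus-Burkholder strategy you describe cannot produce \eqref{eq:KPZ_small} on its own, because for $k\leq n^{s^*}$ the individual error terms are too large and only the cancellation coming from the spatial average of nearly independent terms saves the bound.
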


Let us see how Theorem~\ref{thm:main} follows from these estimates.
{\CO 
\begin{proof}[Proof of Theorem~\ref{thm:main} assuming Proposition~\ref{prop: key claims}]
Let $\delta$ be as in Proposition~\ref{prop: key claims}(iii) and decompose
\begin{align*}
|S_n(f)-K_n(f)|&\leq |s_n^\delta(f)|+|k_n^\delta(f)|+|S_n^\delta(f)-M_n^\delta(f)|+n^{-d/2}\sum_{x\in\Z^d}f(x/\sqrt n)|K_n^\delta(x)-M_n^\delta(x)|.
\end{align*}
Since $f$ is compactly supported, the right-hand side is bounded by $Cn^{-\xi-\eps}$, where $\eps:=\min\{\eps_1,\eps_2,\eps_3\}$ and $\eps_1,\eps_2,\eps_3$ are as in Proposition~\ref{prop: key claims}.
\end{proof}
}

% {\CO Note that in (iii), we consider $K_n^\delta(x)$ pointwise rather than as a spatial average above since our methods yield pointwise estimates, which directly imply the analogous results for spatial averages as follows: given $f\in\mathcal C_c(\R^d)$ with ${\rm supp}(f)\subset [-L,L]^d$, 
% \begin{align*}
% \E| K_n^{\delta}(f)-M_n^{\delta}(f) |\leq  n^{-d/2}\sum_{x\in\Z^d}f(x/\sqrt n) \E|K_n^\delta(x)-M_n^\delta(x)| \leq C (2L+1)^d \|f\|_\infty n^{-\xi-\eps}.
% \end{align*}
% }
\subsection{Outline}

{We start by proving Theorem~\ref{thm: lower tail concentration} in Section \ref{sec:lower}. Towards the proof of the main result, we first prove various preliminary approximation results for the random walk bridge transition probability in Section~\ref{sec:prel}, which we then use to derive approximations for the partition function, the logarithmic partition function and the polymer measure in Section~\ref{sec:approx}. Using these, the approximation of $S_n(f)$ by $M_n(f)$ is fairly straightforward and will be carried out in Section~\ref{sec:she}. Finally, the most technical part of this work is the approximation of $K_n(f)$, which is done in Section~\ref{sec:kpz}. In the appendix, we record a result about the Doob decomposition of $\log Z_n$ from \cite{CY06}.}

\section{Lower tail concentration and negative moments (Proof of Theorem~\ref{thm: lower tail concentration})}\label{sec:lower}

{To emphasize the dependence of $Z_{\omega,n}^\beta$ on $\omega$, we write it as  $Z_n(\omega)$ in this section.} We first check some properties of $\log Z_n(\omega)$, which we can interpret as a function $\R^{T_n}\to\R$, where {$T_n\coloneqq \llbracket 1, n\rrbracket\times\llbracket-n,n\rrbracket^d$}.
\begin{lem}
The function $\R^{T_n}\ni \omega\mapsto \log Z_n(\omega)$ is convex. Moreover, 
\begin{align}\label{eq:partial}
\frac{\partial}{\partial\omega_{t,x}}\log Z_n(\omega)=\beta \mu_{\omega,n}^\beta(X_t=x).
\end{align}
\end{lem}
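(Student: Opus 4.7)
Both claims are essentially direct computations, so the plan is simply to carry them out cleanly. The partition function, viewed as a function of $\omega\in\R^{T_n}$, is
\begin{align*}
Z_n(\omega)=e^{-n\lambda(\beta)}\,E\!\left[\exp\!\Big(\beta\sum_{k=1}^n\omega_{k,X_k}\Big)\right],
\end{align*}
which is a finite (nonnegative) linear combination (over the $2^n$ trajectories of $(X_k)_{k\le n}$ inside the support) of exponentials of linear functions of the coordinates $\omega_{t,x}$. Since all coordinates $\omega_{t,x}$ with $\|x\|_\infty\le n$ appear with $X_k$ taking only values in $\llbracket -n,n\rrbracket^d$ under $P$, the sum is finite and differentiation under the expectation is trivially justified.

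For the derivative formula \eqref{eq:partial}, first I would differentiate the inner expectation, yielding
\begin{align*}
\frac{\partial}{\partial\omega_{t,x}}Z_n(\omega)=\beta\,e^{-n\lambda(\beta)}E\!\left[\mathbf{1}_{\{X_t=x\}}\exp\!\Big(\beta\sum_{k=1}^n\omega_{k,X_k}\Big)\right].
\end{align*}
Dividing by $Z_n(\omega)$ and recognizing the right-hand side as $\beta\,\mu_{\omega,n}^\beta(X_t=x)$ gives \eqref{eq:partial}.

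For convexity, my plan is to apply H\"older's inequality. For $\omega^{(0)},\omega^{(1)}\in\R^{T_n}$ and $\theta\in[0,1]$, write
\begin{align*}
\exp\!\Big(\beta\sum_{k=1}^n(\theta\omega^{(1)}+(1-\theta)\omega^{(0)})_{k,X_k}\Big)=\Big(e^{\beta\sum_k\omega^{(1)}_{k,X_k}}\Big)^{\theta}\Big(e^{\beta\sum_k\omega^{(0)}_{k,X_k}}\Big)^{1-\theta}.
\end{align*}
Taking $E[\cdot]$ and applying H\"older with exponents $1/\theta$ and $1/(1-\theta)$ yields
\begin{align*}
Z_n(\theta\omega^{(1)}+(1-\theta)\omega^{(0)})\le Z_n(\omega^{(1)})^{\theta}\,Z_n(\omega^{(0)})^{1-\theta},
\end{align*}
since the $-n\lambda(\beta)$ shift distributes linearly. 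Taking logarithms gives convexity of $\omega\mapsto\log Z_n(\omega)$.

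There is essentially no obstacle here; the only point requiring a modicum of care is justifying differentiation under the expectation, but since the sum defining $E[\cdot]$ is finite once one restricts to the support of $(X_k)_{k\le n}$, this is immediate. Alternatively, convexity can be verified by computing the Hessian and recognizing it as the covariance matrix $\beta^2\operatorname{Cov}_{\mu_{\omega,n}^\beta}(\mathbf{1}_{\{X_t=x\}},\mathbf{1}_{\{X_s=y\}})$, which is automatically positive semi-definite; I would mention this as an alternative but proceed via H\"older as the shorter route.
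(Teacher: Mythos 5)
Your proposal is correct and follows essentially the same route as the paper: convexity via H\"older's inequality applied to the convex combination in the exponent, and the derivative formula by direct differentiation of the finite sum defining $E[\cdot]$ (the paper isolates the factor $e^{\beta\omega_{t,x}-\lambda(\beta)}$ on the event $\{X_t=x\}$, which is the same computation). The only cosmetic slip is that the number of trajectories is $(2d)^n$ rather than $2^n$, which does not affect the argument.
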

\begin{proof}
By the H\"older inequality, for any $t\in [0,1]$ and $\omega,\omega'\in\R^{T_n}$,
        \al{
        Z_n(t\omega+(1-t)\omega')&=e^{-n\lambda(\beta)}E\left[e^{\beta\sum_{k=1}^n t\omega_{k,\S_k}+(1-t)\omega'_{k,\S_k}}\right]\\
        &\leq e^{-n\lambda(\beta)}E\left[e^{\beta\sum_{k=1}^n \omega_{k,\S_k}}\right]^{t}\cdot E\left[e^{\beta\sum_{k=1}^n \omega'_{k,\S_k}}\right]^{1-t}=Z_n(\omega)^t\cdot Z_n(\omega')^{1-t}.
        }
Next, we compute the partial derivative as follows:
\begin{align*}
    \frac{\partial}{\partial \omega_{t,x}}Z_{n}(\omega)&=\frac{\partial}{\partial \omega_{t,x}}\Big(Z_{[1,n]\setminus\{t\}}[\1{X_t=x}]e^{\beta\omega_{t,x}-\lambda(\beta)}+Z_n[\1{X_t\neq x}]\Big)=\beta Z_n[\1{X_t=x}],\\
    \frac{\partial}{\partial \omega_{t,x}}\log Z_{n}(\omega)&=\frac{1}{Z_n (\omega)} \frac{\partial Z_{n} (\omega)}{\partial \omega_{t,x}} =\frac{\beta Z_n[\1{X_t=x}]}{Z_n}=\beta \mu_{\omega,n}^\beta(X_t=x).
\end{align*}
\end{proof}
Next, we show the following  bound, which corresponds to \cite[Lemma 3.3]{CSZ20}.
\begin{lem}\label{lem:convex}
Assume \eqref{eq:assumption}. There exists $K>1$ such that, for all $n\in\N$,
\begin{align}\label{eq:event}
   \P\Big(Z_{n}(\omega)\geq \frac{1}{K},\,\sum_{k=1}^n \sum_{x\in\Z^d} \mu_{\omega,n}^\beta(X_k=x)^2\leq \frac{K}{\beta^2}\Big)\geq \frac 12.
\end{align}
\end{lem}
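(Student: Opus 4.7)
My plan is to split the event via the union bound: set $A_K \coloneqq \{Z_n \geq 1/K\}$ and $B_K \coloneqq \{\sum_{k,x}\mu^\beta_{\omega,n}(X_k=x)^2 \leq K/\beta^2\}$, then show $\P(A_K)\wedge\P(B_K)\geq 3/4$ for $K$ sufficiently large (uniform in $n$). For $A_K$: \eqref{eq:assumption} gives $\p(\beta)>1$, so $(Z_n)_n$ is uniformly $L^p$-bounded for some $p>1$ and hence uniformly integrable; combined with the almost sure convergence $Z_n\to Z_\infty$ coming from the martingale property and with weak disorder ($\P(Z_\infty>0)=1$), this gives tightness of $\{\mathrm{Law}(Z_n)\}_n$ on $(0,\infty)$, so $\sup_n\P(Z_n<1/K)\to 0$ as $K\to\infty$.

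For $B_K$, I would start from the partial-derivative identity \eqref{eq:partial} just proved, giving
\[\sum_{k=1}^n \sum_{x \in \Z^d} \mu^\beta_{\omega,n}(X_k = x)^2 = \beta^{-2}\|\nabla_\omega \log Z_n\|_2^2,\]
and aim to prove a moment bound $\E[\sum_{k,x}\mu^\beta_{\omega,n}(X_k=x)^2]\leq C/\beta^2$ uniform in $n$, from which $\P(B_K^c)\leq 1/4$ follows by Markov's inequality for $K$ large. The moment bound should come from the Doob decomposition $\log Z_n=M_n+A_n$ recorded in the appendix from \cite{CY06}: expanding $\log(Z_k/Z_{k-1})=\log(1+\sum_y \alpha_k(0,y)E_{k,y})$ conditionally on $\F_{k-1}$ to second order, and using $\E[E_{k,y}\mid \F_{k-1}]=0$ together with $\E[E_{k,y}^2\mid\F_{k-1}]=\colon \sigma^2(\beta)>0$, one gets
\[-\E[\log(Z_k/Z_{k-1}) \mid \F_{k-1}] \geq \tfrac{1}{2}\sigma^2(\beta) \sum_{y \in \Z^d} \alpha_k(0,y)^2 - (\text{controlled Taylor error}).\]
Summing in $k$ and using the Comets-Yoshida bound $-\E[\log Z_n]\leq C$ in weak disorder, this yields $\E[\sum_{k,y}\alpha_k(0,y)^2]\leq C/\sigma^2(\beta)$; a parallel argument, exploiting the local limit theorem (Theorem~\ref{thmx:local}) to control the pinned second moments that appear, transfers the bound to the full-horizon marginals $\mu^\beta_{\omega,n}(X_k=x)$.

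The principal obstacle is precisely this last transfer from the growing-horizon marginals $\alpha_k(0,y)=\mu^\beta_{\omega,k-1}(X_k=y)$ to the full-horizon marginals $\mu^\beta_{\omega,n}(X_k=x)$. In the $L^2$-phase $\beta<\beta_2$ one could use the second-moment identity $\E[\sum \mu^2 \cdot Z_n^2] = E^{\otimes 2}[L_n e^{\sigma^2(\beta) L_n}]$ directly, but this blows up beyond $\beta_2$; under the subcritical assumption \eqref{eq:assumption}, one must instead rely on Theorem~\ref{thmx:local} together with the high-probability lower bound on $Z_n$ coming from the event $A_K$ in order to perform an $L^p$-analysis with $p\in(1+2/d,\p(\beta))$ in place of $L^2$.
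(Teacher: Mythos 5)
Your union-bound reduction and your treatment of the event $A_K=\{Z_n\geq 1/K\}$ match the paper: since $Z_n\to Z_\infty>0$ almost surely in weak disorder and each $Z_n$ is positive, $\inf_n Z_n>0$ a.s., hence $\sup_n\P(Z_n<1/K)\to 0$ as $K\to\infty$; the detour through uniform integrability is unnecessary but harmless.

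The gap is in the event $B_K$. The paper proves nothing from scratch here: it invokes \cite[Corollary~1.11(ii)]{junk2023local}, which states that under \eqref{eq:assumption} the full-horizon overlap $\sup_n\sum_{k=1}^n\sum_x\mu_{\omega,n}^\beta(X_k=x)^2$ is finite almost surely, and then lets $K\to\infty$. Your proposed substitute breaks down exactly where you flag ``the principal obstacle''. The Doob-decomposition argument (second-order expansion of $\log(Z_k/Z_{k-1})$, which incidentally requires a \emph{lower} bound of the form $\E[\phi(U)\mid\F_{k-1}]\geq c\sum_y\alpha_k(0,y)^2$, whereas Proposition~\ref{prop:log} only supplies the upper bound) can at best control $\E\big[\sum_k\sum_y\alpha_k(0,y)^2\big]$, i.e.\ the overlap of the \emph{growing-horizon} marginals $\alpha_k(0,y)=\mu_{\omega,k-1}^\beta(X_k=y)$. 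The lemma concerns the fixed-horizon marginals, and by the Markov property
\[
\mu_{\omega,n}^\beta(X_k=x)=\alpha_k(0,x)\cdot\frac{Z_{k-1}}{Z_n}\cdot e^{\beta\omega_{k,x}-\lambda(\beta)}\,Z_{(k,n]}^{k,x},
\]
so the two differ by an unbounded factor involving the future partition function $Z_{(k,n]}^{k,x}$; there is no pointwise or simple $L^1$ comparison, and the ``parallel argument'' you gesture at is precisely the content of the cited corollary of \cite{junk2023local}, whose proof is a substantial piece of that paper and uses Theorem~\ref{thmx:local} in an essential way. Moreover, your suggestion to feed lower-tail or negative-moment control of $Z_n$ into this transfer risks circularity, since Theorem~\ref{thm: lower tail concentration} is deduced from the present lemma. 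The correct completion is to quote \cite[Corollary~1.11(ii)]{junk2023local} (or reprove it), not to attempt the transfer ad hoc.
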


\begin{proof}
It is enough to find $K>1$ such that, the following hold:
\begin{align*}
    \sup_n\P\Big(Z_n(\omega)<\frac 1K\Big)&\leq \frac 14,\\
    \sup_n\P\Big(\sum_{k=1}^n \sum_{x\in\Z^d} \mu_{\omega,n}^\beta(X_k=x)^2> \frac{K}{\beta^2}\Big)&\leq \frac 14.
\end{align*}
The first claim immediately follows from the fact that $\inf_n Z_n>0$ almost surely in weak disorder. Similarly, for the second claim, recall that by \cite[Corollary~1.11(ii)]{junk2023local}, $\sup_n \sum_{k=1}^n \sum_{x\in\Z^d} \mu_{\omega,n}^\beta(X_k=x)^2<\infty$ almost surely under the assumption that $\p>1+2/d$.
\end{proof}

The following {result} is taken directly from \cite[Lemma 3.3 and Proposition 3.4]{CTT17}.
\begin{thmx}\label{thmx:conc}
Assume \eqref{eq:concprop}. There exists $C>0$ such that, for every $n\in\N$ and every convex, differentiable function $f:\R^{T_n}\to\R$ and every $a\in\R$, $t,b>0$,
\begin{align}\label{eq:adadad}
\mathbb{P}(f(\omega)\leq a-t)\P\Big(f(\omega)\geq a, \sum_{(t,x)\in T_n}\Big(\frac{\partial}{\partial\omega_{t,x}}f(\omega)\Big)^2\leq b\Big)\leq Ce^{-(t/b^2)^\gamma/C}.
\end{align}
\end{thmx}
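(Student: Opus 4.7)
The plan is to reduce this convex concentration inequality to the assumption \eqref{eq:concprop} via an infimal-convolution regularization, a standard device going back to Talagrand and Maurey. The key idea is that although $f$ need not be globally Lipschitz, the favorable event $\{\sum_{(s,x)}(\partial_{s,x} f)^2 \leq b\}$ forces $f$ to behave as if it had Lipschitz constant $\sqrt{b}$ there, and convexity of $f$ lets us promote this local control into a genuine global Lipschitz bound on a suitable lower envelope $g$.

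Concretely, I would introduce
\[
g(\omega) := \inf_{\omega' \in \R^{T_n}} \bigl\{ f(\omega') + \sqrt{b}\,|\omega - \omega'|\bigr\},
\]
and verify three properties: (i) $g$ is convex, as the infimal convolution of the two convex functions $f$ and $\sqrt{b}\,|\cdot|$; (ii) $g$ is globally $\sqrt{b}$-Lipschitz, by the triangle inequality; and (iii) $g(\omega)\leq f(\omega)$ always, with equality at every $\omega$ at which $f$ admits a subgradient of Euclidean norm at most $\sqrt{b}$. Property (iii) uses convexity: if $v \in \partial f(\omega)$ with $|v| \leq \sqrt{b}$, then $f(\omega') \geq f(\omega) + v\cdot(\omega'-\omega) \geq f(\omega) - \sqrt{b}\,|\omega - \omega'|$, so the infimum defining $g(\omega)$ is attained at $\omega' = \omega$.

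Applying \eqref{eq:concprop} to the convex, $1$-Lipschitz function $g/\sqrt{b}$ yields
\[
\P\bigl(|g - m| > s\bigr) \leq C e^{-(s/\sqrt{b})^\gamma/C} \qquad \text{for all } s > 0,
\]
where $m$ is a median of $g$ (the passage from mean to median costs only a constant). By (iii) we have the inclusions $\{f \geq a,\ \sum_{(s,x)} (\partial_{s,x} f)^2 \leq b\} \subseteq \{g \geq a\}$, and because $g \leq f$, also $\{f \leq a-t\} \subseteq \{g \leq a-t\}$. A dichotomy on the position of $m$ then closes the argument: if $m \leq a-t/2$, then $\{g\geq a\} \subseteq \{g - m \geq t/2\}$ and the second factor on the left of \eqref{eq:adadad} is controlled by the concentration inequality; otherwise $m > a-t/2$, and $\{g \leq a-t\} \subseteq \{g-m \leq -t/2\}$ handles the first factor. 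In both cases the remaining probability is bounded trivially by $1$, giving a bound of the form $Ce^{-(t/(2\sqrt b))^\gamma/C}$ on the product, which matches the right-hand side of \eqref{eq:adadad} modulo reconciling the specific exponent (the written $(t/b^2)^\gamma$ looks like a constants-level rewriting of $(t/\sqrt b)^\gamma$).

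The main subtlety I expect is a clean treatment of property (iii) at points where $f$ is non-differentiable. A convex function is differentiable off a set of Lebesgue measure zero, and one may either work with subgradients throughout (slightly enlarging the favorable event to $\{\exists v \in \partial f(\omega): |v|^2 \leq b\}$) or observe that under the mild regularity of $\P$ the non-differentiability set has probability zero. Beyond this technical point, the argument is a textbook application of the convex-concentration recipe, and the structure is exactly the one used to prove Talagrand's two-point concentration estimate for convex functionals of product measures.
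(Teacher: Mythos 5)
This theorem is not proved in the paper at all --- it is quoted verbatim from \cite{CTT17} (Lemma 3.3 and Proposition 3.4 there), so there is no in-paper argument to compare against. Your infimal-convolution proof is precisely the standard argument behind that cited result: regularize $f$ by $g=f\,\square\,(\sqrt b\,|\cdot|)$, note that $g$ is convex, globally $\sqrt b$-Lipschitz, satisfies $g\le f$ everywhere and $g=f$ wherever $|\nabla f|\le\sqrt b$, apply \eqref{eq:concprop} to $g/\sqrt b$, and split on the location of the median. All the steps check out (the worry about non-differentiability is moot since the theorem assumes $f$ differentiable, and the degenerate case where $f$ has no gradient of norm $\le\sqrt b$ anywhere --- so that $g\equiv-\infty$ --- makes the second factor vanish and the claim trivial). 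The one point to flag is the exponent: your argument yields $C e^{-(t/(2\sqrt b))^\gamma/C}$, whereas \eqref{eq:adadad} is written with $(t/b^2)^\gamma$; these are \emph{not} equivalent up to constants as functions of $b$ (yours is the stronger, and correct, bound for $b\ge 1$, but weaker for $b<1$). This discrepancy is in the paper's transcription of the cited statement rather than in your proof, and it is harmless for the only application here, where $b=K$ is a fixed constant.
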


With the help of {Theorem~\ref{thmx:conc}}, we can now prove Theorem~\ref{thm: lower tail concentration}.
\begin{proof}[Proof of Theorem~\ref{thm: lower tail concentration}]
 By Lemma~\ref{lem:convex}, $f(\omega)\coloneqq \log Z_n(\omega)$ is convex. Setting $a=-\log K$ and $b=K$, we apply Theorem~\ref{thmx:conc}. From \eqref{eq:partial} we see that the second probability in \eqref{eq:adadad} is equal to the probability in \eqref{eq:event} and that it is thus bounded from below by $\frac 12$, uniformly in $n$. We thus arrive at 
\begin{align*}
    \mathbb{P}(\log Z_n(\omega)\leq -\log K-t)\leq 2Ce^{-(t/K^2)^\gamma/C}.
\end{align*}
Next, we prove \eqref{eq:union}. For fixed $N\in\N$ and $\eps\in(0,1)$,
\begin{align*}
    \E\Big[\max_{x\in A,n\leq N}(Z_n^x)^{-k}\Big]&\leq |A|^{\eps} \Big(1+\int_1^\infty \P\Big(\max_{x\in A,n\leq N}(Z_n^x)^{-k}>t|A|^{\eps}\Big)dt\Big)\\
    &\leq |A|^{\eps} \Big(1+|A|\int_1^\infty \P\Big(\max_{n\leq N}Z_n^{-k}>t|A|^{\eps}\Big)dt\Big)\\
    & =  |A|^{\eps} \Big(1+|A|\int_1^\infty \P\Big(\max_{n\leq N}Z_n^{-k/\eps}>t^{1/\eps}|A|\Big)dt\Big)\\
    &\leq |A|^{\eps} \Big(1+\E\Big[\max_{n\leq N}Z_n^{-k/\eps}\Big]\int_1^\infty t^{-1/\eps}dt\Big),
\end{align*}
where we have used the Markov inequality in the last line. Since $\eps<1$, the last integral is finite, and by the Doob inequality and by the previous tail bound the last expectation is bounded independently of $N$. The claim thus follows by monotone convergence.
\end{proof}

\section{Preliminaries}\label{sec:prel}

In this paper, we repeatedly apply the Burkholder inequality in a specific way. To avoid repetition, we summarize the argument in the following lemma:
\begin{lem}\label{lem:bh}
{Recall $E_{k,y}$ from \eqref{Def: Eky}.}	Suppose that $A_{k,y}$ is {an $\F_{k-1}$-measurable random variable}. Let $I\subseteq \N\times \Z^d$ be such that, $I\cap \{k\}\times \Z^d$ is finite for every $k\in\N$. For every $p\in[1,2]$, there exists $C(p)>0$ such that, $N\coloneqq \sum_{(k,y)\in I}A_{k,y} E_{k,y}$ satisfies
\begin{align}\label{eq:burkholder1}
	\mathbb{E}[|N|^p]\leq C(p)\E\Big[\sum_{(k,y)\in I}|A_{k,y}|^p\Big].
\end{align}
In particular, if $A_{k,y}$ can be factorized as $A_{k,y}=A'_{k,y}A''_{k,y}$, then it holds that
\begin{align}
	\mathbb{E}[|N|^p]&\leq C(p)\E\Big[\sup_{(k,y)\in I} |A_{k,y}'|^p \sum_{(k,y)\in I}|A_{k,y}''|^p\Big],\label{eq:burkholder2}\\
	\mathbb{E}[|N|]&\leq C(p)\E\Big[\sup_{(k,y)\in I} |A_{k,y}'|^{p'}\Big]^{1/p'}\E\Big[ \sum_{(k,y)\in I} |A_{k,y}''|^p\Big]^{1/p},\label{eq:burkholder3}
\end{align}
where $p'$ is the H\"older dual of $p$ defined as $\frac{1}{p} + \frac{1}{p'} =1$. Here, $C(p)$ only depends on $p$ and on the law of $E_{k,y}$, but not on $A_{k,y}$.
\end{lem}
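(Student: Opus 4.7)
The plan is to realize $N$ as the terminal value of an $L^p$-martingale and apply the Burkholder--Davis--Gundy (BDG) inequality. Since $I \cap (\{k\} \times \Z^d)$ is finite for every $k$, we can enumerate $I = \{(k_j, y_j)\}_{j \geq 1}$ in an order with $k_1 \leq k_2 \leq \cdots$. Define the refined filtration $\mathcal{G}_j \coloneqq \mathcal{F}_{k_j - 1} \vee \sigma(\omega_{k_j, y_\ell} : \ell \leq j,\ k_\ell = k_j)$. Because $A_{k_j, y_j}$ is $\mathcal{F}_{k_j - 1}$-measurable, hence $\mathcal{G}_{j-1}$-measurable, while $E_{k_j, y_j}$ is independent of $\mathcal{G}_{j-1}$ with mean zero, the sequence $D_j \coloneqq A_{k_j, y_j} E_{k_j, y_j}$ is a martingale difference sequence with respect to $(\mathcal{G}_j)_j$, and its partial sums converge to $N$.

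For $p \in [1, 2]$, the BDG inequality yields $\mathbb{E}[|N|^p] \leq C_p^{\mathrm{BDG}}\, \mathbb{E}\bigl[(\sum_j D_j^2)^{p/2}\bigr]$. The crucial observation is that $p/2 \leq 1$, so $t \mapsto t^{p/2}$ is subadditive on $[0, \infty)$, giving $(\sum_j D_j^2)^{p/2} \leq \sum_j |D_j|^p$. Combined with the independence of $E_{k,y}$ from the $\mathcal{F}_{k-1}$-measurable factor $A_{k,y}$ and the finiteness $\mathbb{E}[|E_{0,0}|^p] < \infty$ (a consequence of \eqref{eq:expmom}), this gives \eqref{eq:burkholder1} with $C(p) = C_p^{\mathrm{BDG}}\,\mathbb{E}[|E_{0,0}|^p]$.

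The factorized versions are then essentially immediate. For \eqref{eq:burkholder2}, substitute $|A_{k,y}|^p = |A'_{k,y}|^p \, |A''_{k,y}|^p$ inside \eqref{eq:burkholder1} and pull the supremum of $|A'_{k,y}|^p$ out of the sum. For \eqref{eq:burkholder3}, invoke Jensen's inequality $\mathbb{E}|N| \leq \mathbb{E}[|N|^p]^{1/p}$ and then apply Hölder's inequality with conjugate exponents $p'/p$ and $p'/(p'-p)$ on the right-hand side of \eqref{eq:burkholder2} to separate the supremum factor from the sum of $|A''_{k,y}|^p$.

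There is no deep obstacle: the argument is a standard use of BDG for sums of martingale differences with small exponents. The only technical points are (i) choosing the enumeration of $I$ so that $A_{k,y}$ remains predictable with respect to the enlarged filtration $(\mathcal{G}_j)$, and (ii) recognizing that the restriction $p \in [1,2]$ is essential both for the applicability of the $L^p$-BDG estimate and for the subadditivity step $p/2 \leq 1$. The constant depends only on $p$ and on $\mathbb{E}[|E_{0,0}|^p]$, hence only on $p$ and the law of $E$, as asserted.
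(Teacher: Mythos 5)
Your treatment of \eqref{eq:burkholder1} and \eqref{eq:burkholder2} follows the same route as the paper: realize $N$ as the terminal value of a martingale, apply Burkholder/BDG with exponent $p\in[1,2]$, and use the subadditivity of $t\mapsto t^{p/2}$ on the bracket. (One small bookkeeping point: with your definition $\mathcal G_{j}=\mathcal F_{k_j-1}\vee\sigma(\omega_{k_j,y_\ell}:\ell\le j,\,k_\ell=k_j)$, the inclusion $\mathcal F_{k_j-1}\subseteq\mathcal G_{j-1}$ fails when $k_j>k_{j-1}$, so $A_{k_j,y_j}$ need not be $\mathcal G_{j-1}$-measurable as written; you should enlarge $\mathcal G_{j-1}$ to contain $\mathcal F_{k_j-1}$, e.g.\ $\mathcal G_{j-1}:=\mathcal F_{k_j-1}\vee\sigma(\omega_{k_\ell,y_\ell}:\ell\le j-1)$, and check that $E_{k_j,y_j}$ is still independent of it. This is easily repaired.)

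The genuine gap is in \eqref{eq:burkholder3}. Your route is: Jensen, then \eqref{eq:burkholder2}, then H\"older with exponents $p'/p$ and $p'/(p'-p)$ applied to the product $\sup|A'|^p\cdot\sum|A''|^p$. Carrying this out gives
\begin{equation*}
\E[|N|]\le C(p)^{1/p}\,\E\Big[\sup_{(k,y)\in I}|A'_{k,y}|^{p'}\Big]^{1/p'}\,\E\Big[\Big(\sum_{(k,y)\in I}|A''_{k,y}|^p\Big)^{\frac{1}{2-p}}\Big]^{\frac{2-p}{p}},
\end{equation*}
since $p'/(p'-p)=1/(2-p)$. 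Because $1/(2-p)\ge 1$ for $p\in(1,2)$, Jensen's inequality gives $\E[Y^{1/(2-p)}]^{2-p}\ge\E[Y]$ for $Y=\sum|A''|^p$, so your second factor \emph{dominates} $\E[\sum|A''|^p]^{1/p}$ and the claimed bound does not follow; you have proved a strictly weaker statement involving a higher moment of $\sum|A''|^p$. The correct order of operations (and the one in the paper) is to apply Burkholder first with exponent $1$, obtaining $\E[|N|]\le C\,\E\big[\big(\sum (A'_{k,y}A''_{k,y})^2E_{k,y}^2\big)^{1/2}\big]$, pull $\sup|A'_{k,y}|$ out of the square root of the bracket, apply H\"older with exponents $(p',p)$ to the product $\sup|A'|\cdot\big(\sum (A'')^2E^2\big)^{1/2}$, and only then use subadditivity on $\E\big[\big(\sum (A'')^2E^2\big)^{p/2}\big]\le \E[|E_{0,0}|^p]\,\E\big[\sum|A''|^p\big]$. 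In short: H\"older must be applied before the exponent $p$ is brought inside the sum, not after.
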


\begin{proof}
Let $(t_1,x_1),(t_2,x_2),\dots$ denote an enumeration of $ I$ in the lexicographical order, i.e., such that $k\mapsto t_k$ is non-decreasing, and define \(\mathcal{G}_n \coloneqq  \sigma(\omega_{t_k, x_k} \mid k \leq n)\). Let $I_n\coloneqq \{(t_k,x_k)|~k\leq n\}$ and $N_n\coloneqq \mathbb{E}[N|\G_n]=\sum_{(k,y)\in I_n}A_{k,y}E_{k,y}$, where we have used the measurability assumption in the second equality. Note that $N_n$ is a martingale with respect to $\G_n$. Let $S_n\coloneqq \sum_{k=1}^n (N_k-N_{k-1})^2= \sum_{(k,y)\in I_n} A_{k,y}^2 E_{k,y}^2$. By the Burkholder inequality \cite[Theorem 9]{B66}, there exists a universal constant $C(p)>0$ such that, 
	\begin{align}\label{eq:wq}
		\mathbb{E}[|N|^p]\leq C(p) \mathbb{E}[S_\infty^{p/2}]=C(p)\E\Big[\Big(\sum_{(k,y)\in I} A_{k,y}^2 E_{k,y}^2\Big)^{p/2}\Big]. 
	\end{align}
	Now \eqref{eq:burkholder1} follows by applying the sub-additive estimate $|\sum_i x_i|^{\theta}\leq \sum_i |x_i|^\theta$, which is valid for $\theta = p/2\in [0,1]$ and $x_i\in \R$. On the other hand, \eqref{eq:burkholder2} follows simply by noting that $E_{k,y}^2$ is non-negative, so the supremum can be pulled outside the sum in \eqref{eq:wq}. The claim again follows by the sub-additive estimate. Finally, for   \eqref{eq:burkholder3} we compute
	\begin{align*}
		\mathbb{E}[|N|]&\leq C(1) \E\Big[\Big(\sum_{(k,y)\in I}\big(A_{k,y}'A_{k,y}''\big)^2 E_{k,y}^2\Big)^{1/2}\Big]\\
			 &\leq C(1) \E\Big[\Big(\sup_{(k,y)\in I} |A_{k,y}'|\Big)\Big(\sum_{(k,y)\in I}\big(A_{k,y}''\big)^2 E_{k,y}^2\Big)^{1/2}\Big]\\ 
			 &\leq C(1)  \E\Big[\sup_{(k,y)\in I} |A_{k,y}'|^{p'}\Big]^{1/p'}\E\Big[\Big(\sum_{(k,y)\in I} \big(A_{k,y}''\big)^2 E_{k,y}^2 \Big)^{p/2}\Big]^{1/p},
	\end{align*}
	where we have used the H\"older  inequality. Now \eqref{eq:burkholder3} follows from the sub-additive estimate.
\end{proof}

We denote by $|\cdot|$  the $l_2$-norm on $\R^d$. We also frequently use the following estimate for the heat kernel of the simple random walk:
\begin{lem}\label{lem:p}
For any $p>1+2/d$, there exists $C>0$ such that, the following hold:
\begin{enumerate}
 \item[(0)] For all $n\in\N$ and $y\in\Z^d$,
    \begin{align}
            \max_{x\in\Z^d}p_n(x)&\leq Cn^{-d/2},\label{eq:well_known_max}\\
    p_n(y)&\leq Ce^{-|y|^2/(Cn)}.\label{eq:well_known_quadr}
    \end{align}
    \item[(i)]For all $N\in\N$,
\begin{align}
	\sum_{n\geq N}\sum_{y\in\Z^d}P(X_n=y)^p&\leq CN^{1-\frac d2 (p-1)}.
\end{align}
    \item[(ii)] For all $N\in\N$, $n\leq N/2$, and $y\in \Z^d$ with $|y|\leq N^{3/5}$ and $(0,0)\leftrightarrow(N,y)$,
\begin{align*}
    \sum_{k=n,\dots,N-n}\sum_{z\in \Z^d} P^{0,0;N,y}(X_k=z)^p\leq Cn^{1-\frac d2(p-1)}.
\end{align*}
    \item[(iii)] For all $N\in \N$, $n\leq 2N^{{1/7}}$, and  $y,z\in\Z^d$ with $|y|\leq N^{3/5}$ and $(0,0)\leftrightarrow(N,y)\leftrightarrow (N-n,z)$,
\begin{align}\label{eq:ratio_srw}
\Big|\frac{P(X_{N-n}=z)}{P(X_N=y)}-1\Big|\leq CN^{-1/5}.
\end{align}
\end{enumerate}
\end{lem}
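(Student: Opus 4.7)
The plan is to derive all four bounds from standard random-walk heat kernel estimates, centered on the local central limit theorem (LCLT). Part (0) is classical: the bound $\max_x p_n(x)\leq Cn^{-d/2}$ follows from Fourier inversion together with uniform decay of $|\hat P(\xi)|$ away from its finitely many maxima in $[-\pi,\pi]^d$, while the Gaussian tail $p_n(y)\leq Ce^{-|y|^2/(Cn)}$ comes from an exponential Chebyshev bound: since each coordinate of $X_n$ is a sum of bounded martingale increments, $E[e^{\lambda\cdot X_n}]\leq e^{C|\lambda|^2 n}$, and optimizing in $\lambda$ gives the claim. Part (i) is a one-line consequence of (0):
\begin{align*}
\sum_{y\in\Z^d}p_n(y)^p\leq(\max_y p_n(y))^{p-1}\sum_y p_n(y)\leq Cn^{-d(p-1)/2},
\end{align*}
and since $p>1+2/d$ gives $d(p-1)/2>1$, summing in $n\geq N$ yields $CN^{1-d(p-1)/2}$.

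For part (ii), I use the bridge representation $P^{0,0;N,y}(X_k=z)=p_k(z)p_{N-k}(y-z)/p_N(y)$ and aim to show
\begin{align*}
\max_z P^{0,0;N,y}(X_k=z)\leq C\Big(\tfrac{k(N-k)}{N}\Big)^{-d/2}\qquad\text{for }k\in\llbracket n,N-n\rrbracket.
\end{align*}
The subtlety is that $|y|$ may be as large as $N^{3/5}$, so $p_N(y)$ carries a large Gaussian factor $e^{-d|y|^2/(2N)}$; however, the same penalty appears in $p_k(z)p_{N-k}(y-z)$ near the typical bridge location $z\approx ky/N$. Indeed, the LCLT yields $p_k(ky/N)p_{N-k}((N-k)y/N)/p_N(y)\sim (k(N-k)/N)^{-d/2}$ because the exponents cancel via $\tfrac{k}{N^2}+\tfrac{N-k}{N^2}=\tfrac{1}{N}$. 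Combined with $\sum_z P^{0,0;N,y}(X_k=z)=1$,
\begin{align*}
\sum_z P^{0,0;N,y}(X_k=z)^p\leq\bigl(\max_z P^{0,0;N,y}(X_k=z)\bigr)^{p-1}\leq C\Bigl(\tfrac{k(N-k)}{N}\Bigr)^{-d(p-1)/2},
\end{align*}
and summing over $k$ (splitting the range at $N/2$) gives the claimed $Cn^{1-d(p-1)/2}$.

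For part (iii), I invoke a quantitative LCLT with relative error, such as the Edgeworth-type expansion $p_n(x)=2c_d n^{-d/2}e^{-d|x|^2/(2n)}(1+O(n^{-1}+|x|^4 n^{-3}))$ valid on the appropriate sublattice. Taking the ratio at $(N-n,z)$ versus $(N,y)$,
\begin{align*}
\frac{p_{N-n}(z)}{p_N(y)}=\Big(\tfrac{N}{N-n}\Big)^{d/2}\exp\!\Big(\tfrac{d}{2}\bigl(\tfrac{|y|^2}{N}-\tfrac{|z|^2}{N-n}\bigr)\Big)\bigl(1+o(N^{-1/5})\bigr).
\end{align*}
The prefactor expands as $1+O(n/N)=1+O(N^{-6/7})$. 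Writing
\begin{align*}
\tfrac{|y|^2}{N}-\tfrac{|z|^2}{N-n}=\tfrac{|y|^2-|z|^2}{N}-\tfrac{n|z|^2}{N(N-n)},
\end{align*}
and using $|z-y|\leq n\leq 2N^{1/7}$ and $|y|\leq N^{3/5}$, the first term is bounded in absolute value by $|z-y|(|y|+|z|)/N\leq CN^{-9/35}$ and the second by $CN^{-23/35}$. Since $9/35>7/35=1/5$, both are $o(N^{-1/5})$, and the claimed bound follows after exponentiating.

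The main technical point across the proof is that parts (ii) and (iii) require an LCLT with controlled relative error up to moderate-deviation displacements of order $N^{3/5}$, i.e.\ slightly beyond the diffusive scale $\sqrt N$. Such Edgeworth-type estimates are standard in the random-walk literature (e.g.\ Lawler--Limic); once those inputs are in place, what remains is routine bookkeeping of exponents.
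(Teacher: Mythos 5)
Your overall strategy coincides with the paper's: parts (0) and (i) are handled identically, and your part (iii) computation (splitting $\tfrac{|y|^2}{N}-\tfrac{|z|^2}{N-n}$ and checking that both pieces are $O(N^{-9/35})+O(N^{-23/35})=o(N^{-1/5})$) is exactly the paper's argument. The issue is in part (ii), where you assert
\begin{align*}
\max_z P^{0,0;N,y}(X_k=z)\leq C\Bigl(\tfrac{k(N-k)}{N}\Bigr)^{-d/2}
\end{align*}
on the grounds that the Gaussian exponents cancel at the typical bridge location $z\approx ky/N$. That computation only controls the kernel near the typical point. The supremum is over \emph{all} $z$, and the Edgeworth-type LCLT you invoke has relative error $O(k^{-1}+|z|^4k^{-3})$, which is not small once $|z|\gg k^{3/4}$ — and at time $k$ the bridge can sit anywhere up to distance $k$ from the origin. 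This is not a removable technicality: the denominator $p_N(y)$ can be as small as $e^{-d|y|^2/(2N)}\gtrsim e^{-dN^{1/5}/2}$ (times polynomial factors), so for atypical $z$ you must show that the decay of $p_k(z)p_{N-k}(y-z)$ genuinely beats this large factor, and the LCLT in multiplicative-error form gives you nothing there.

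The paper's proof of (ii) spends essentially all of its length on exactly this point. For $|z|\leq k^{2/3}$ the LCLT applies and the identity $\tfrac{|y|^2}{N}-\tfrac{|y-z|^2}{N-k}-\tfrac{|z|^2}{k}=-\tfrac{N}{k(N-k)}\bigl|z-\tfrac{k}{N}y\bigr|^2\leq 0$ (valid for all $z$, not just the typical one) gives $p_k^{0,0;N,y}(z)\leq Ck^{-d/2}$. For $|z|>k^{2/3}$ one switches to the crude Gaussian upper bound \eqref{eq:well_known_quadr} on the numerator and argues separately for $k\geq N^{7/10}$ and $k<N^{7/10}$ that $|z|^2/(Ck)$ dominates both $|y|^2/N\leq N^{1/5}$ and the LCLT correction terms, yielding $p_k^{0,0;N,y}(z)\leq Ce^{-k^{1/3}/C}\leq Ck^{-d/2}$. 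You should add this case analysis (or an equivalent moderate/large-deviation argument uniform in $z$) before concluding; the remaining summation over $k$ is then routine, as you say.
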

\begin{proof}
Recall that $C$ is a large positive constant that may change from line to line. 

Equation \eqref{eq:well_known_max} is well-known, see for example \cite[Proposition~2.4.4]{LL10}. For \eqref{eq:well_known_quadr}, recalling that by the exponential Chebyshev inequality, we have $p_n(x)\leq e^{-nI(x/n)}$, where $I$ is the large deviation rate function of the simple random walk. The claim follows since $I(x)\geq cx^2$ for some $c>0$ and all $x\in\R^d$.

For part (i), since $\sum_{y\in \Z^d} p_n(y)=1$, we use \eqref{eq:well_known_max} to get
\begin{align*}
	\sum_{y\in\Z^d} p_n(y)^p\leq \max_y p_n(y)^{p-1}\sum_{y\in \Z^d} p_n(y)\leq C^{p-1} n^{-\frac{d(p-1)}{2}}.
\end{align*}
For $p>1+{2/d}$, the previous display is summable, and the  claim follows.

\smallskip Next, we consider part (ii). Again, using $\sum_{z\in \Z^d} p^{0,0;N,y}_k(z)=1$, we observe
\begin{align}\label{eq: hear kernel RW bridge}
\sum_{z\in \Z^d} p^{0,0;N,y}_k(z)^p&\leq \max_z p^{0,0;N,y}_k(z)^{p-1}\sum_{z\in \Z^d} p^{0,0;N,y}_k(z)=\max_z p^{0,0;N,y}_k(z)^{p-1},
\end{align}
and it is thus enough to bound $\max_z p^{0,0;N,y}_k(z)$. By the local central limit theorem for the simple random walk  \cite[Theorem~2.3.11]{LL10}, there exists $C>0$ such that, for $z\in\Z^d$ and $k\in\N$ with $(0,0)\leftrightarrow (k,z)$,
\aln{
& k^{-d/2} \exp{\left( -\frac{d|z|^2}{2k}-Ck^{-1}-C k^{-3}|z|^4\right)} \leq p_k(z)\leq k^{-d/2} \exp{\left( -\frac{d|z|^2}{2k}+Ck^{-1}+C k^{-3}|z|^4\right)}.\label{eq: LCLT}
}
Without loss of generality, we assume $k\leq N/2$ and $(0,0)\leftrightarrow (k,z)$. By \eqref{eq: LCLT}, there exists $C>0$ such that, for $|z|\leq k^{2/3}$, since we can essentially neglect the terms $Ck^{-1}$ and $C k^{-3} |z|^4$, we have
\al{
p^{0,0;N,y}_k(z) &= \frac{p_k(z) p_{N-k}(y-z)}{p_N(y)}\\
&\leq \frac{C N^{d/2}}{k^{d/2}(N-k)^{d/2}} \exp\Big(\frac{d|y|^2}{2N}-\frac{d|y-z|^2}{2(N-k)}-\frac{d|z|^2}{2k}\Big)\leq  2^d C k^{-d/2},
}
where  we have used
\al{
\frac{|y|^2}{N}-\frac{|y-z|^2}{N-k}-\frac{|z|^2}{k}=-\frac{N}{k(N-k)}\left| z- \frac{k}{N}y\right|^2\leq 0.
}
It remains to take care of $|z|> k^{2/3}$, and in this case we further need to make a distinction based on $k$. Indeed, from \eqref{eq:well_known_quadr},  if $k\ge N^{7/10}$ and $|z|>  k^{2/3}$, then since  $|z|^2/k\geq k^{1/3}\geq N^{7/30}$ and  $|y|^2/N\leq N^{1/5}= N^{6/30}$ due to $|y|\leq N^{3/5}$, we estimate
\al{
p^{0,0;N,y}_k(z)&= \frac{p_k(z) p_{N-k}(y-z)}{p_N(y)}
\leq \frac{p_k(z)}{p_N(y)}\leq C N^{d/2} \exp{\Big(-\frac{|z|^2}{C k} + \frac{C |y|^2}{N} \Big)}  \leq C e^{-k^{1/3}/C}.
}
Finally, {we consider $|z|> k^{{2/3}}$} and $k< N^{7/10}$. Since $|y|\leq N^{3/5}$ and $k\leq N/2$, We have
\al{
\frac{|y|^2}{N}-\frac{|y-z|^2}{N-k}\leq \frac{|y|^2-|y-z|^2}{N-k}
&\leq   \frac{2|y||z|}{N-k}\leq   \frac{4|z|}{N^{2/5}}.
}
{Note that, {since $k^{1/3}\leq N^{7/30}\ll N^{2/5}$},} for $N$ large enough depending on $C,$
\begin{align*}
\frac{|z|^2}{Ck} - \frac{C|z|}{N^{2/5}}=\frac{|z|^2}{2Ck} +\frac{|z|^2}{2Ck} - \frac{C|z|}{N^{2/5}}\geq \frac{k^{{4/3}}}{2Ck}+ |z|\Big(\frac{1}{2Ck^{{1/3}}} -\frac{C}{N^{2/5}}\Big)\geq \frac{k^{{1/3}}}{2C}.
\end{align*}
{Hence, since $|y-z|\leq N^{3/5}+ k \leq 2 N^{7/10}$ due to $(0,0)\leftrightarrow (k,z)$ and $|y|\leq N^{3/5}$,  for $N$ large enough,}
\al{
-\frac{|z|^2}{Ck} +C\Big(\frac{|z|}{N^{2/5}}+\frac{|y-z|^4}{N^3}+\frac{|y|^4}{N^3}\Big) &\leq -\frac{k^{{1/3}}}{2C}+C\big(N^{-{1/5}}+N^{-{3/5}}\big).} Using this together with \eqref{eq:well_known_quadr} and \eqref{eq: LCLT}, by $k\leq N/2$ and $|z|> k^{2/3}$, we get
\al{
p^{0,0;N,y}_k(z) &= \frac{p_k(z) p_{N-k}(y-z)}{p_N(y)} \\
&\leq C e^{-\frac{|z|^2}{Ck}}\times (N-k)^{-d/2} e^{-\frac{d|y-z|^2}{2(N-k)}+ \frac{C|y-z|^4}{(N-k)^{3}}} \times N^{d/2}e^{\frac{d|y|^2}{2N}+ \frac{C|y|^4}{N^3}}\\
&\leq C e^{-\frac{|z|^2}{Ck} +C\big(\frac{ |z|}{N^{2/5}}+\frac{|y-z|^4}{N^3}+\frac{|y|^4}{N^3} \big)}\\
&\leq Ce^{-k^{1/3}/C}.
}
Putting things together, there exists $C>0$ such that, if $|z|> k^{2/3}$, then for any $k \in \N$, we have
\aln{\label{Eq: srwLdpRegime}
p^{0,0;N,y}_k(z) \leq Ce^{-k^{1/3}/C} \leq C k^{-d/2}.
}
Summing up \eqref{eq: hear kernel RW bridge} over $k$, a straightforward computation ends the proof.\\

Finally, we consider part (iii). By $(N,y)\leftrightarrow (N-n,z)$, we have $|y-z|\leq |y-z|_1 \leq n\leq 2N^{{1/7}}$ and $|z|\leq |y| + |y-z|\leq C N^{3/5}$. Hence, 
\al{
\Big|-\frac{|y|^2}{N}+\frac{|z|^2}{N-n}\Big|+\frac{|y|^4}{N^3}+\frac{|z|^4}{(N-n)^3}&\leq \Big|\frac{|y|^2}{N-n}-\frac{|y|^2}{N}\Big|+\Big|\frac{|z|^2}{N-n}-\frac{|y|^2}{N-n}\Big|+CN^{-3/5}\\
&\leq \frac{2n|y|^2}{N^2}+ \frac{2|y-z|(|y|+|z|)}{N}+CN^{-3/5}\\
&\leq CN^{-{23/35}}+CN^{-{9/35}}+CN^{-3/5}\leq CN^{-1/5}.
}
Thus, by \eqref{eq: LCLT}, we  have
\begin{align*}
\Big|\frac{p_{N-n}(z)}{p_N(y)}-1\Big|
&\leq  \Big| \left(\frac{N}{N-n}\right)^{d/2}\exp(CN^{-1/5})-1\Big|\\
&\leq \big|(1+CN^{-{6/7}})^{d/2}(1+CN^{-1/5})-1\big|\leq CN^{-1/5}.
\end{align*}
\end{proof}

\section{Approximation results}\label{sec:approx}

 The purpose of this section is to prove various estimates for the partition function with a fixed starting point. The proof of the main theorem will then additionally make use of the fact that an average over starting points is taken to prove the desired convergence results. 
\subsection{Super-diffusive scale estimate}
For any event $A$ that is measurable with respect to  the sigma field of the simple random walk $X$, $x\in\Z^d$ and $n\in\N$, we {generalize \eqref{eq:def_alpha} by}
\begin{align}\label{def:alpha}
{\alpha_n(x, A)\coloneqq  \frac{Z_{n-1}^x[\1{A}]}{Z_{n-1}^x}}.
\end{align} 
For simplicity  of notation, we write $\alpha_n(A)\coloneqq \alpha_n(0,A)$. To control the contribution from parts of the environment at super-diffusive distance, the following simple lemma is useful:
\begin{lem}\label{lem:compare}
Assume \eqref{eq:assumption}.
\begin{enumerate}
    \item[(i)] For any $p\in[1,\p\wedge 2)$ there exist $C=C(p)>0$ and $k=k(p)>1$ such that, for any event $A$ that is measurable with respect to  the sigma field of the simple random walk $X$ and any $n\in\N$,
    \begin{align*}
        \mathbb{E}[Z_n[\1A]^p]\leq CP(A)^{1/k}.
    \end{align*}
    Moreover, we can take $C(1)=k(1)=1$. A similar bound holds if $Z_n$ is replaced by the point-to-point partition function $Z_{(0,n)}^{0,0;n,y}$ with $|y|\leq r n$ where $r$ is a positive constant as in Theorem~\ref{thmx:local}.
    \item[(ii)] Assume \eqref{eq:concprop} with  $\gamma>1$. There exists $C>0$ such that, for any $n,k\in\N$ and any event $A$ that is measurable with respect to  the sigma field of the simple random walk $X$,\
    \begin{align*}
        \mathbb{E}[\alpha_n(A)^k]\leq C e^{-(\log n)^{\gamma}/C}+n P(A).
    \end{align*}
\end{enumerate}
\end{lem}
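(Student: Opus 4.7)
The approach is to handle the $p=1$ endpoint by Fubini and to interpolate for larger $p$. When $p=1$, a direct swap of expectations yields
\[
\mathbb{E}[Z_n[\1{A}]] \,=\, E\big[\mathbb{E}[e^{\beta H_n-n\lambda(\beta)}]\,\1{A}\big] \,=\, P(A),
\]
so $C(1)=k(1)=1$. For $p\in(1,\p\wedge 2)$, assumption \eqref{eq:assumption} guarantees that the interval $(p,\p\wedge 2)$ is nonempty, and I pick any $q$ in this interval. Applying Lyapunov's inequality (equivalently, H\"older with exponents $1/\theta$ and $1/(1-\theta)$ applied to $X^p=X^{\theta q}\cdot X^{1-\theta}$) with $\theta=(p-1)/(q-1)$ to $X=Z_n[\1{A}]$ gives
\[
\mathbb{E}[Z_n[\1{A}]^p] \,\leq\, \mathbb{E}[Z_n[\1{A}]^q]^{\theta}\,\mathbb{E}[Z_n[\1{A}]]^{1-\theta}.
\]
The first factor is at most $\mathbb{E}[Z_n^q]\leq C$ uniformly in $n$, by the definition of $\p(\beta)$ and the choice $q<\p(\beta)$, and the second factor equals $P(A)^{1-\theta}$ by the $p=1$ case. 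Setting $k(p)=(q-1)/(q-p)$ concludes. The point-to-point variant follows from the same argument, replacing the uniform moment bound on $Z_n$ by Theorem~\ref{thmx:local} (which is precisely the reason for the restriction $|y|\leq rn$) and using Fubini against the bridge measure at $p=1$.

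\textbf{Plan for part (ii).} The idea is to split the expectation depending on whether $Z_{n-1}$ is small or not. Since $0\leq \alpha_n(A)\leq 1$, on the event $\{Z_{n-1}<1/n\}$ I simply bound $\alpha_n(A)^k\leq 1$, and Theorem~\ref{thm: lower tail concentration} (applicable thanks to \eqref{eq:concprop}) applied with $u=n$ gives
\[
\mathbb{E}[\alpha_n(A)^k\,\1{Z_{n-1}<1/n}]\,\leq\, \P(Z_{n-1}\leq 1/n)\,\leq\, Ce^{-(\log n)^\gamma/C}.
\]
On the complementary event $\{Z_{n-1}\geq 1/n\}$, I use the pointwise bound
\[
\alpha_n(A)^k \,\leq\, \alpha_n(A)\,=\,\frac{Z_{n-1}[\1{A}]}{Z_{n-1}}\,\leq\, n\,Z_{n-1}[\1{A}],
\]
so the $p=1$ case of part (i) contributes at most $nP(A)$. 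Summing the two estimates yields the claim.

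\textbf{Anticipated difficulty.} No serious obstacle is expected: both parts reduce to combining Theorem~\ref{thm: lower tail concentration}, Theorem~\ref{thmx:local}, and Fubini with standard interpolation. The only real point of care is that the choice $q\in(p,\p\wedge 2)$ in part (i) requires the strict inequality $p<\p\wedge 2$, which is exactly what is assumed. A small bookkeeping subtlety in (ii) is the indexing $n-1$ versus $n$, but since $\log(n-1)\asymp \log n$, the tail estimate from Theorem~\ref{thm: lower tail concentration} remains of the stated form after possibly enlarging $C$.
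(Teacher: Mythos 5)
Your proposal is correct and follows essentially the same route as the paper: part (i) is proved there by the $p=1$ Fubini identity plus a H\"older/Jensen interpolation between a uniformly bounded higher moment of $Z_n$ (or of the point-to-point partition function via Theorem~\ref{thmx:local}) and the first moment, which is exactly your Lyapunov inequality in a slightly different parametrization; part (ii) is the identical split $\mathbb{E}[\alpha_n(A)^k]\leq\mathbb{E}[\alpha_n(A)]\leq \P(Z_{n-1}\leq n^{-1})+n\,\mathbb{E}[Z_{n-1}[\1A]]$ combined with Theorem~\ref{thm: lower tail concentration}. The only (harmless) point left implicit is that for the point-to-point variant the interpolation exponent $q$ should also be taken above $1+2/d$ so that Theorem~\ref{thmx:local} applies.
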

\begin{proof}
\textbf{Part (i)}: For $p=1$ the claim follows  from the Fubini theorem, and the inequality is even an equality with $C=k=1$. For $p>1$, let $\delta\in(0,\frac 12)$ be small enough that $(1+\delta)p<\p\wedge 2$ and note
 \begin{align*}
 \E\big[Z_N[\1A]^p\big]&=\E\Big[Z_N[\1A]^{p(1-\delta^2)}Z_N[\1A]^{p\delta^2}\Big]\\
     &\leq \E\big[Z_N^{(1+\delta)p}\big]^{1-\delta}\E\Big[Z_N[\1A]^{p\delta}\Big]^{\delta}\\
     &\leq \E\big[Z_N^{(1+\delta)p}\big]^{1-\delta}P(A)^{p\delta^2},
 \end{align*}
 where we have used the H\"older  inequality in the second line, and the Jensen  inequality with $p\delta<1$ and part~(i) in the last line. By definition of $\p$, the first factor on the  right-hand side  is bounded in $ N$. For the point-to-point partition function, the same argument works with Theorem~\ref{thmx:local}.

 \textbf{Part (ii)}: We can estimate {
 \begin{align*}
     \mathbb{E}[\alpha_n(A)^k]\leq \mathbb{E}[\alpha_n(A)] \leq \mathbb{P}(Z_{n-1}\leq n^{-1})+n\mathbb{E}[Z_{n-1}[\1A]],
 \end{align*}}
 and use part~(i) with $p=1$ and Theorem~\ref{thm: lower tail concentration}.
\end{proof}

\subsection{Approximations for the partition function}\label{sec:approx_part}

\begin{prop}\label{prop:diff}
Assume \eqref{eq:assumption}.
\begin{enumerate}
	\item[(i)] For every $p\in(1+2/d,\p\wedge 2)$ there exists $C>0$ such that, for every $0\leq n\leq N$ and for every function $g$  measurable with respect to  the filtration of the random walk $X$, 
\begin{align}
	\mathbb{E}[|Z_n-Z_N|^p]&\leq Cn^{1-\frac d2(p-1)},\label{eq:a}\\ 
	{\E\Big[\big|Z_n[g]-Z_N[g]\big|^p\Big]}&\leq C\|g\|_\infty^p n^{1-\frac d2(p-1)}. \label{eq:b}
\end{align}
\item[(ii)] For every $p\in(1+2/d,\p\wedge 2)$ there exists $C>0$ such that, for every $n\in\N$, $m\in \Iintv{1,n/2}$, and $|y|\leq n^{3/5}$ with $(0,0)\leftrightarrow (n,y)$,
	\begin{align}
  \mathbb{E}[|Z_{(0,m]\cup[n-m,n)}^{0,0;n,y}-Z_{(0,n)}^{0,0;n,y}|^p]\leq Cm^{1-\frac d2(p-1)}.\label{eq:error1}
	\end{align}
 Moreover, for any $m\in \llbracket 1,n^{1/7}\rrbracket$, almost surely,
 \begin{align}
     \Big|\frac{Z_{(0,m]\cup[n-m,n)}^{0,0;n,y}}{Z_{m}\cev Z^{n,y}_{[n-m,n)}}-1\Big|\leq Cn^{-1/5}.\label{eq:error2}
 \end{align}
\end{enumerate}\end{prop}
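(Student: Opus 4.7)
My plan is to prove both parts by the same Burkholder-based martingale strategy (Lemma~\ref{lem:bh}), combined with the uniform moment bound for point-to-point partition functions from Theorem~\ref{thmx:local} and the heat-kernel sums of Lemma~\ref{lem:p}.

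For part~(i), I write $Z_N-Z_n$ as a telescoping sum of $(\F_k)_k$-martingale differences $Z_k-Z_{k-1}=\sum_y Z_{k-1}[\mathbf 1_{X_k=y}]\,E_{k,y}$ with $\F_{k-1}$-measurable coefficients. Two applications of Burkholder---first at the outer level of $(Z_k)_k$, then to each inner conditional sum over $y$ via Lemma~\ref{lem:bh}---together with the subadditivity bound $(\sum a_i)^{p/2}\leq\sum a_i^{p/2}$ valid for $p\in[1,2]$, give
\begin{align*}
\E[|Z_N-Z_n|^p]\leq C\sum_{k>n}\sum_y\E\bigl[Z_{k-1}[\mathbf 1_{X_k=y}]^p\bigr].
\end{align*}
The factorization $Z_{k-1}[\mathbf 1_{X_k=y}]=p_k(y)\,Z_{(0,k)}^{0,0;k,y}$ combined with Theorem~\ref{thmx:local} yields $\E[Z_{k-1}[\mathbf 1_{X_k=y}]^p]\leq Cp_k(y)^p$ for $|y|\leq rk$, while the tail $|y|>rk$ is absorbed by the Gaussian decay of $p_k$ together with Lemma~\ref{lem:compare}(i). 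Lemma~\ref{lem:p}(i) summed under the hypothesis $p>1+2/d$ then delivers~\eqref{eq:a}, and~\eqref{eq:b} follows identically after replacing $\mathbf 1_{X_k=y}$ by $g\mathbf 1_{X_k=y}$ and pulling out $\|g\|_\infty$.

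For part~(ii), bound~\eqref{eq:error1}, I telescope instead over the \emph{excised} middle times $j\in\llbracket m,n-m-1\rrbracket$: setting $\tilde Z_j\coloneqq Z_{(0,j]\cup[n-m,n)}^{0,0;n,y}$, the sequence $(\tilde Z_j)$ is a martingale in the filtration that adds the time-$j$ environments at step $j$, and $\tilde Z_m,\tilde Z_{n-m-1}$ match the two sides of~\eqref{eq:error1}. The same double Burkholder reduction bounds the $p$-th moment of $\tilde Z_{n-m-1}-\tilde Z_m$ by $C\sum_{j,z}\E[\tilde Z_{j-1}[\mathbf 1_{X_j=z}]^p]$. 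Conditioning the bridge on $X_j=z$ yields the independent factorization
\begin{align*}
\tilde Z_{j-1}[\mathbf 1_{X_j=z}]=p^{0,0;n,y}_j(z)\,Z_{(0,j)}^{0,0;j,z}\,Z_{[n-m,n)}^{j,z;n,y},
\end{align*}
whose first factor is controlled by Theorem~\ref{thmx:local}. For the second, a further conditioning on $X_{n-m}=v$ combined with Jensen's inequality reduces it to $L^p$-bounds on point-to-point partition functions of length $m$, again covered by Theorem~\ref{thmx:local} in the diffusive range $|y-v|\leq rm$, the tail being absorbed by the Gaussian decay of the bridge heat kernel. This produces $\E[\tilde Z_{j-1}[\mathbf 1_{X_j=z}]^p]\leq C\bigl(p^{0,0;n,y}_j(z)\bigr)^p$, and Lemma~\ref{lem:p}(ii) summed over $j$ delivers~\eqref{eq:error1}.

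The almost-sure bound~\eqref{eq:error2} is purely kinematic: conditioning on $(X_m,X_{n-m})=(u,v)$ decomposes both numerator and denominator over the same independent factors $Z_{(0,m]}^{0,0;m,u}\,Z_{[n-m,n)}^{n-m,v;n,y}$, weighted respectively by $P^{0,0;n,y}(X_m=u,X_{n-m}=v)=p_m(u)\,p_{n-2m}(v-u)\,p_m(y-v)/p_n(y)$ and by $p_m(u)p_m(y-v)$. The target ratio is thus a convex combination of $p_{n-2m}(v-u)/p_n(y)$, and Lemma~\ref{lem:p}(iii) applied with $N=n$ and $n_{\mathrm{lem}}=2m\leq 2n^{1/7}$ (the parity condition being automatic for $(u,v)$ reachable by the bridge) gives $1+O(n^{-1/5})$ uniformly. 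The main obstacle is the estimate on $Z_{[n-m,n)}^{j,z;n,y}$ in~\eqref{eq:error1}: this partition function lives on a bridge of length $n-j$ whose active environment spans only a short final window, so Theorem~\ref{thmx:local} is not directly applicable, and the atypical-$v$ regime requires careful control through the extra Jensen step together with the heat-kernel tails.
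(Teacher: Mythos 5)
Your overall strategy is the paper's: telescope into martingale increments, apply Lemma~\ref{lem:bh}, factor the coefficients into heat-kernel times point-to-point partition functions, invoke Theorem~\ref{thmx:local}, and sum with Lemma~\ref{lem:p}. Part~(i) is correct (your term-by-term treatment of $|y|>rk$ via Lemma~\ref{lem:compare}(i) applied to the event $\{X_k=y\}$, together with the Gaussian decay of $P(X_k=y)$, is a legitimate variant of the paper's device of excising the cone event $A=\{|X_k|>rk\text{ for some }k\}$ before applying Burkholder). Your argument for \eqref{eq:error2} is exactly the paper's: write both numerator and denominator as sums over $(u,v)$ of the same nonnegative weights, so the ratio is a weighted average of $p_{n-2m}(v-u)/p_n(y)$, controlled by Lemma~\ref{lem:p}(iii).

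The gap is in \eqref{eq:error1}, and it is precisely the point you flag as ``the main obstacle'' without closing it. After the Burkholder reduction you must bound $\E\big[\big(Z_{(0,j)}^{0,0;j,z}\,Z_{[n-m,n)}^{j,z;n,y}\big)^p\big]$ for \emph{all} $z$, and (inside the Jensen step) $\E\big[\big(Z_{(n-m,n)}^{n-m,v;n,y}\big)^p\big]$ for all $v$. Theorem~\ref{thmx:local} only covers the diffusive ranges $|z|\leq rj$, $|y-v|\leq rm$. Outside these ranges the available bound (via Lemma~\ref{lem:compare}(i)) is of the form $C\,p_m(y-v)^{1/k-p}$, which grows \emph{exponentially} in $m$ when $|y-v|$ is of order $m$, whereas the compensating factors $P^{j,z;n,y}(X_{n-m}=v)$ are in general only stretched-exponentially small (and for atypical $z$ the normalization $p_{n-j}(y-z)^{-1}$ in the bridge kernel can itself be enormous). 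So ``Gaussian decay of the bridge heat kernel'' does not absorb this regime as written. The paper avoids the problem by restricting, \emph{before} the martingale decomposition, to the event $\widetilde A^c=\{|X_k|\leq rk\text{ and }|X_k-y|\leq r(k-n)\text{ for all }k\in\llbracket m,n-m\rrbracket\}$: on $\widetilde A^c$ every point-to-point factor appearing in the Burkholder sum automatically lies in the range where Theorem~\ref{thmx:local} applies (in particular $|X_{n-m}-y|\leq rm$), while the contribution of $\widetilde A$ is disposed of wholesale by the single large-deviation bound $P(\widetilde A)\leq Ce^{-m^{1/3}/C}$ (from \eqref{Eq: srwLdpRegime}, using $|y|\leq n^{3/5}$) combined with the point-to-point version of Lemma~\ref{lem:compare}(i). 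You should restructure your proof of \eqref{eq:error1} along these lines; the term-by-term treatment of atypical positions after Burkholder does not go through.
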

\begin{remark}
In \cite{comets2017rate,CN21}, it is proved that in the $L^2$-regime, i.e., {for $\beta<\beta_2$}, the speed of convergence of $Z_n$ to $Z_\infty$ is $n^{-\frac{d-2}4}$ (more precisely, it is shown that $n^{\frac{d-2}4}(Z_n-Z_\infty)$ converges in law to an explicit limiting random variable). The bound \eqref{eq:a} suggests that outside the $L^2$-regime, i.e., {for $\beta\in (\beta_2,\beta_c)$}, the speed of convergence should be $n^{-\xi+o(1)}$, where $\xi$ is as in \eqref{eq:xi}. Indeed, given $\eps>0$, if we choose $p\in(1+2/d,\p\wedge 2)$ {sufficiently close to $\p$,} then \eqref{eq:a} yields
\begin{align*}
    \mathbb{P}(|Z_n-Z_\infty|>n^{-\xi+\eps})\leq n^{(\xi-\eps)p} \E |Z_n-Z_\infty|^p \leq Cn^{-\eps/2}.
\end{align*}
A corresponding lower bound, as well as the speed of convergence in the critical case $\p=1+2/d$, are left for future research.
\end{remark}

\begin{proof}
	\textbf{Part (i)}: 
 Since \eqref{eq:a} is a special case of \eqref{eq:b} with $g \equiv 1$, we will focus on \eqref{eq:b}. For $r\in(0,1)$, let $A\coloneqq \{|X_k|>kr\text{ for some }n\leq k\leq N\}$. We have
 \begin{align}\label{eq:write}
     \mathbb{E}[|Z_N[g]-Z_n[g]|^p]\leq 3\|g\|_\infty^p\E\big[Z_N[\1A]^p\big]+3\|g\|_\infty^p\E\big[Z_n[\1A]^p\big]+3\E\Big[\big|Z_N[g \1{A^c}]-Z_n[g \1{A^c}]\big|^p\Big].
 \end{align}
The first two terms decay exponentially fast in $n$ by Lemma~\ref{lem:compare}. To treat the final term, we apply Lemma~\ref{lem:bh} with $A_{k,y}\coloneqq Z_{(0,k)}^{0,0;k,y}[\1{A^c} g]p_k(y)$, which gives
    \begin{align*}
	    \mathbb{E}[|Z_N[\1{A^c}g]-Z_n[\1{A^c}g]|^p]\leq C(p)  \E\Big[\sum_{t=n+1}^{N}\sum_{|y|\leq r t} |Z_{(0,t)}^{0,0;t,y}[\1{A^c} g]|^p p_t(y)^p\Big].
    \end{align*}
    Since $\E [|Z_{(0,t)}^{0,0;t,y}[\1{A^c} g]|^p]\leq \|g\|_\infty^p\E [(Z_{(0,t)}^{0,0;t,y})^p]$ is, by {Theorem~\ref{thmx:local}}, bounded over $t\in\N$ and $|y|\leq r t$ with a suitable $r>0$, by Lemma~\ref{lem:p}, we get
    \begin{align*}
	    \mathbb{E}[|Z_N-Z_n|^p]\leq C\|g\|_\infty^pe^{-n/C}+C\|g\|_\infty^p\sum_{t=n+1}^{N} \sum_{y\in\Z^d}p_t(y)^p\leq  C\|g\|_\infty^p n^{1-\frac d2(p-1)}.
    \end{align*}
   
\smallskip \textbf{Part (ii)}: The first bound \eqref{eq:error1} is similar to part (i), so 
we outline how the computation can be modified. First, we set $\widetilde A\coloneqq \{|X_k|>rk\text{ or }|X_k-y|>r(k-n)\text{ for some }k\in\Iintv{m,n-m}\}$ with $r$ as in Theorem~\ref{thmx:local}.  By \eqref{Eq: srwLdpRegime}, for $n$ large enough, we estimate
\al{
P(\widetilde A)&\leq 2\sum_{k=\Iintv{m,n/2}} \sum_{z\in \Z^d:~|z| > r k/2} p^{0,0;N,y}_k(z)\\
&\leq \sum_{k=\Iintv{m,n/2}} \sum_{z\in[-n,n]^d\cap  \Z^d}  C e^{-k^{1/3}/C}
\leq  C e^{-m^{1/3}/C}.
}
Hence, as in part (i), we obtain
\begin{align*}
    \mathbb{E}[|Z_{(0,m]\cup[n-m,n)}^{0,0;n,y}-Z_{(0,n)}^{0,0;n,y}|^p]\leq Ce^{-m^{{1/3}}/C}+C\mathbb{E}[|Z_{(0,m]\cup[n-m,n)}^{0,0;n,y}[\1{\widetilde A^c}]-Z_{(0,n)}^{0,0;n,y}[\1{\widetilde A^c}]|^p].
\end{align*}
The rest of the calculation is as before: noting
\al{
&\mathbb{E}[Z_{(0,{m}]\cup[n-m,n)}^{0,0;n,y}[\1{\widetilde A^c}]-Z_{(0,n)}^{0,0;n,y}[\1{\widetilde A^c}]\\
&= \sum_{t=m+1}^{n-m-1}\sum_{z:|z|\leq r t,|z-y|\leq r(n-t)} Z_{(0,t)\cup [n-m,n)}^{0,0;n,y}[\1{\widetilde A^c\cap \{X_t=z\}}]E_{t,z},
}
by Lemma~\ref{lem:bh} and Lemma~\ref{lem:p}(ii), we estimate 
\begin{align*}
&\mathbb{E}[|Z_{(0,m]\cup [n-m,n)}^{0,0;n,y}[\1{\widetilde A^c}]-Z_{(0,n)}^{0,0;n,y}[\1{\widetilde A^c}]|^p]\\
& \leq C \E\Big[\sum_{t=m+1}^{n-m-1}\sum_{z:|z|\leq r t,|z-y|\leq r(n-t)}\Big(Z_{(0,t)}^{0,0;t,z}Z^{t,z;n,y}_{[n-m,n)}\Big)^p {P^{0,0;n,y}(X_t=z)}^{p}\Big]\\
&\leq C \sum_{t=m+1}^{n-m-1}\sum_{z\in\Z^d} P^{0,0;n,y}(X_t=z)^p\leq C m^{1-\frac{d}{2}(p-1)}.
\end{align*}
For \eqref{eq:error2}, we use \eqref{eq:ratio_srw} to verify that
    \begin{align*}
   & \Big|Z_{(0,m]\cup[n-m,n)}^{0,0;n,y}-Z_{m}\cev Z_{[n-m,n)}^{n,y}\Big|\\
   &\leq \sum_{|z_1|\leq m,|y-z_2|\leq m} Z_{(0,m]}^{0,0;m,z_1}Z_{[n-m,n)}^{n-m,z_2;n,y}p_{m}(z_1)p_{m}(z_2-y)\Big| \frac{p_{n-2m}(z_1-z_2)}{p_n(y)}-1\Big|\\
   &\leq Cn^{-1/5}Z_{m}\cev Z_{[n-m,n)}^{n,y}.
    \end{align*}
\end{proof}
\subsection{Approximations for the polymer measure}\label{sec:approx_polymeas}

The purpose of this section is to approximate the polymer measure $\alpha_k(x,y)$ (recall \eqref{eq:def_alpha}) by
\begin{align}\label{eq:def_alphatilde}
\widetilde \alpha_k(x,y)\coloneqq \Big(p_k(y-x)\cev Z_{[k-k^{{1/8}},k)}^{k,y}\Big)\wedge 1.
\end{align}
For simplicity of notation, we write $\alpha_k(y)\coloneqq \alpha_k(0,y)$ and $\tilde{\alpha}_k(y)\coloneqq\tilde{\alpha}_k(0,y)$.
\begin{lem}\label{lem:ratio}
Assume \eqref{eq:assumption} and \eqref{eq:concprop}. 
\begin{enumerate}
    \item[(i)]There exists $\eps>0$ such that, for all  $M\in\N$ there exists $C>0$ such that, for all $n\in  \N$, 
\begin{align}
	\sup_{m\geq n}\mathbb{E}\Big[\Big|\frac{Z_{n}}{Z_m}-1\Big|^M\Big]\leq C n^{-\eps}.
\end{align}
 \item[(ii)]There exist $\eps>0$ and $C>0$ such that, for all $n\in\N$, $|y|\leq n^{3/5}$, and $p\geq \p\wedge 2$,
	\begin{align*}
		\E\Big[\big|\alpha_n(y)-\widetilde \alpha_n(y)\big|^p\Big]\leq Cn^{-\frac{(\p\wedge 2) d}2-\eps}.
	\end{align*}
 \item[(iii)] There exists $\eps>0$ such that, for any $p\in (1+2/d,\p\wedge 2)$, there exists  $C>0$ such that, for all $n\in\N$ and  $|y|\leq n^{3/5}$,
	\begin{align*}
		\E\Big[\big|\alpha_n(y)-p_n(y)\cev Z_{[n-n^{{1/8}},n)}^{n,y}\big|^p\Big]\leq Cn^{-\frac{p d}2-\eps}.
	\end{align*}
\end{enumerate}

\end{lem}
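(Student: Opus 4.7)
The three parts build on each other: (i) is an $L^M$-upgrade of Proposition~\ref{prop:diff}(i), (ii) relies on a Markov decomposition of $\alpha_n(y)$ around time $n-n^{1/8}$ together with (i), and (iii) is a truncation argument that reduces to (ii).

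For (i), I would write $|Z_n/Z_m-1|^M=|Z_n-Z_m|^M\cdot Z_m^{-M}$ and apply H\"older with some $r>1$ close to $1$:
\begin{equation*}
\E\big[|Z_n/Z_m-1|^M\big]\leq \big(\E|Z_n-Z_m|^{Mr}\big)^{1/r}\big(\E\,Z_m^{-Mr'}\big)^{1/r'}.
\end{equation*}
The negative-moment factor is bounded uniformly in $m$ by \eqref{eq:union} (which is where \eqref{eq:concprop} enters). For the other factor I would pick $p\in(1+2/d,\p\wedge 2)$ and $q\in(Mr,\p)$, both achievable when $r$ is close enough to $1$, and interpolate via log-convexity of $L^p$-norms:
\begin{equation*}
\|Z_n-Z_m\|_{Mr}\leq \|Z_n-Z_m\|_p^{\theta}\|Z_n-Z_m\|_q^{1-\theta}
\end{equation*}
for the appropriate $\theta\in(0,1)$. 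Proposition~\ref{prop:diff}(i) gives $\|Z_n-Z_m\|_p\leq Cn^{-(d(p-1)/2-1)/p}$ while $\|Z_n-Z_m\|_q\leq C$ uniformly, yielding the desired polynomial decay.

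For (ii), since $\alpha_n(y),\tilde\alpha_n(y)\in[0,1]$ one has $|\alpha_n(y)-\tilde\alpha_n(y)|^p\leq|\alpha_n(y)-\tilde\alpha_n(y)|^{\p\wedge 2}$ for every $p\geq\p\wedge 2$, so it suffices to consider $p=\p\wedge 2$. I would apply the Markov property at time $n-n^{1/8}$ to decompose $Z_{n-1}[\1{X_n=y}]$ as a sum over the intermediate position; Proposition~\ref{prop:diff}(ii) with $m=n^{1/8}$ shows that the pinned piece factorizes up to a small error, and the local-CLT ratio bound in Lemma~\ref{lem:p}(iii) replaces the random walk bridge density $p_{n-m}(z)p_m(y-z)/p_n(y)$ by its unconditioned counterpart. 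Together this gives $Z_{n-1}[\1{X_n=y}]\approx Z_{n-n^{1/8}-1}\cdot\cev Z^{n,y}_{[n-n^{1/8},n)}\cdot p_n(y)$. Dividing by $Z_{n-1}$ and using (i) to approximate $Z_{n-n^{1/8}-1}/Z_{n-1}$ by $1$ yields $\alpha_n(y)\approx p_n(y)\cev Z^{n,y}_{[n-n^{1/8},n)}$; since $\alpha_n(y)\leq 1$, truncating the right-hand side at $1$ to produce $\tilde\alpha_n(y)$ is harmless. Each error contributes a prefactor of order $p_n(y)^{\p\wedge 2}\sim n^{-(\p\wedge 2)d/2}$ multiplied by an additional polynomial gain (from (i), Proposition~\ref{prop:diff}(ii), or Lemma~\ref{lem:p}(iii)), producing the claimed $n^{-(\p\wedge 2)d/2-\eps}$.

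For (iii), I would split
\begin{equation*}
\big|\alpha_n(y)-p_n(y)\cev Z^{n,y}_{[n-n^{1/8},n)}\big|^p\leq 2^{p-1}\big|\alpha_n(y)-\tilde\alpha_n(y)\big|^p+2^{p-1}\big|\tilde\alpha_n(y)-p_n(y)\cev Z^{n,y}_{[n-n^{1/8},n)}\big|^p,
\end{equation*}
handling the first term by (ii). The second term vanishes off $\mathcal E\coloneqq\{p_n(y)\cev Z^{n,y}_{[n-n^{1/8},n)}>1\}$; on $\mathcal E$ it is bounded by $(p_n(y)\cev Z)^p$, whose expectation I would control via H\"older with some $q\in(p,\p)$ together with the uniform bound on $\E[\cev Z^q]$ from Theorem~\ref{thmx:local} and the Chebyshev estimate $\mathbb P(\mathcal E)\lesssim n^{-qd/2}$. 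This yields a contribution $\lesssim n^{-qd/2}=n^{-pd/2-(q-p)d/2}$, beating $n^{-pd/2}$ by a positive power. The main technical obstacle is step (ii): several approximation errors from Proposition~\ref{prop:diff}(ii), Lemma~\ref{lem:p}(iii), and (i) must be combined while preserving the leading $n^{-(\p\wedge 2)d/2}$ scaling, and H\"older must be applied carefully to avoid crossing the $\p$-moment integrability ceiling of $\cev Z$.
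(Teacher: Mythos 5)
Your treatment of parts (ii) and (iii) is essentially the paper's route (cut the point-to-point partition function near time $n-n^{1/8}$, factorize via Proposition~\ref{prop:diff}(ii) and the ratio bound of Lemma~\ref{lem:p}(iii), control the denominator by negative moments and the ratio of full partition functions by part (i)); the only imprecision is that deducing the first term of (iii) "by (ii)" needs an extra Jensen step, since (ii) bounds $\E|\alpha_n(y)-\tilde\alpha_n(y)|^q$ only for $q\geq \p\wedge 2$ while (iii) lives at $p<\p\wedge 2$, where the pointwise inequality between powers goes the wrong way; the fix $\E|X|^p\leq(\E|X|^{\p\wedge2})^{p/(\p\wedge2)}$ is one line.

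Part (i), however, has a genuine gap. You write $|Z_n/Z_m-1|^M=|Z_n-Z_m|^M Z_m^{-M}$, apply H\"older, and then try to bound $\|Z_n-Z_m\|_{Mr}$ by interpolating between $\|Z_n-Z_m\|_p$ with $p\in(1+2/d,\p\wedge2)$ and $\|Z_n-Z_m\|_q$ with $q\in(Mr,\p)$. But the lemma must hold for every $M\in\N$, and in the regime this paper cares about ($\beta>\beta_2$) one has $\p\leq 2$, so for $M\geq 2$ the interval $(Mr,\p)$ is empty: there is no exponent $q\geq Mr$ with $\sup_m\E[Z_m^q]<\infty$, and indeed $\E|Z_n-Z_m|^{q}\to\infty$ as $m\to\infty$ for any $q>\p$. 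The quantity that does have all positive moments uniformly bounded is not $Z_n-Z_m$ but the \emph{ratio} $Z_n/Z_m$: writing $Z_m/Z_n=\sum_y\alpha(y)W_y$ as a mixture of future partition functions and applying Jensen to the convex map $u\mapsto u^{-2M}$ reduces $\E[(Z_n/Z_m)^{2M}]$ to negative moments of partition functions, which are finite by Theorem~\ref{thm: lower tail concentration} and \eqref{eq:union}. The paper then gets the polynomial smallness not from an $L^{Mr}$ bound on $Z_n-Z_m$ but from Cauchy--Schwarz against the event $\{|Z_m/Z_n-1|\geq n^{-\delta}\}$, whose probability is controlled by the lower tail of $Z_n$ together with the $L^p$ bound of Proposition~\ref{prop:diff}(i) for a single $p<\p\wedge2$ and Markov's inequality. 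Without some version of this ratio-plus-Jensen trick, your argument cannot reach exponents $M$ beyond $\p$, and since part (i) is later invoked with large $M$ (e.g.\ $M=(1+\delta)p/\delta$ in the proof of (ii)), the gap propagates.
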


\begin{proof}
For \textbf{part (i)}, we note that if $x\geq 0$ and $a\in(0,1/2)$ are such that, $|x^{-1}-1|\leq a$, then since $1/(1+a)\leq x\leq 1/(1-a)$, 
$$-2a\leq \frac{-a}{1-a}\leq x-1\leq  \frac{a}{1+a}\leq 2a.$$ Thus, together with the Cauchy-Schwarz inequality, for $\delta>0$,
\begin{align*}
	\mathbb{E}\Big[\Big|\frac{Z_{n}}{Z_m}-1\Big|^M\Big]&\leq (2n^{ -\delta})^M +\mathbb{E}\Big[\Big|\frac{Z_{n}}{Z_m}-1\Big|^{2M}\Big]^{1/2}\mathbb{P}\Big(\Big|\frac{Z_m}{Z_{n}}-1\Big|\geq n^{-\delta}\Big)^{1/2}.
\end{align*}
Note that $(2n^{ -\delta})^M\leq C n^{-\delta}$. By the Jensen  inequality, we have
\al{
\Big(\frac{Z_{n}}{Z_m}-1\Big)^{2M}&\leq \Big(\frac{Z_{n}}{Z_m}\Big)^{2M} +1 = \Big( \sum_{y\in\Z^d} \frac{Z_n[\mathbf{1}_{\{X_n=y\}}]}{Z_n} Z_{(m-n,m]}^{m-n,y} \Big)^{-2M} +1\\
&\leq \sum_{y\in\Z^d} \frac{Z_n[\mathbf{1}_{\{X_n=y\}}]}{Z_n} ( Z_{(m-n,m]}^{m-n,y})^{-2M}+1.
}
Together with the bound on negative moments \eqref{eq:union}, we obtain
\begin{align*}
	\mathbb{E}\Big[\Big|\frac{Z_{n}}{Z_m}-1\Big|^{2M}\Big]\leq \mathbb{E}\Big[\big(Z_{m-n}\big)^{-2M}+1\big]\leq\sup_{k\geq 0}\mathbb{E}[Z_{k}^{-2M}]+1<\infty.
\end{align*}
  By Theorem~\ref{thm: lower tail concentration} and Proposition~\ref{prop:diff}(i), fixing an arbitrary parameter $p$ in $(1+d/2,\p\wedge 2)$, if $\delta>0$ is sufficiently small, then we have  
\begin{align*}
	\mathbb{P}\Big(\Big|\frac{Z_m}{Z_{n}}-1\Big|\geq n^{-\delta}\Big)&\leq \mathbb{P}(Z_{n}<n^{-\delta})+\mathbb{P}\Big(\big|Z_m-Z_{n}\big|\geq n^{-2\delta}\Big)\\
 &\leq C e^{-(\delta \log n)^\gamma/C}+n^{2\delta p} \mathbb{E}[\big|Z_m-Z_{n}\big|^p] \leq C n^{-\eta},
\end{align*}
with some $\eta = \eta(p,\delta)>0$, which yields the claim.

\smallskip For \textbf{part (ii)} and  \textbf{part (iii)}, we fix $p_0\in(1+2/d,\p\wedge 2)$. Let $p\in(p_0,\p\wedge 2)$. For any $q\geq \p\wedge 2$, we have
\begin{align*}
\E\big[\big|\alpha_n(y)-\widetilde \alpha_n(y)\big|^q \big]
\leq \mathbb{E}[\big|\alpha_n(y)-\widetilde \alpha_n(y)\big|^p]\leq \mathbb{E}[\big|\alpha_n(y)-\cev Z_{[n-n^{{1/8}},n)}^{n,y}p_n(y)\big|^p].
\end{align*}
In the first inequality, we have used that both $\alpha$ and $\widetilde\alpha$ are bounded by one, and in the second inequality we have used the general fact that $|a-(b\wedge 1)|\leq |a-b|$ for $a\in[0,1]$ and $b\geq 0$. To bound the last expectation, by using $(a+b)^p\leq 2 a^p + 2b^p$ for $a,b\geq 0$ and $p\in [1,2]$, we compute, for $\delta>0$ small enough that $(1+\delta)p<\p\wedge 2$,
\begin{align*}
&\mathbb{E}[\big|\alpha_n(y)-\cev Z_{[n-n^{{1/8}},n
)}^{n,y}p_n(y)\big|^p]\\
&\leq 2 p_n(y)^p\Big(\E\Big[\Big|\frac{Z_{(0,n)}^{0,0;n,y}-Z_{n^{{1/8}}}\cev Z_{[n-n^{{1/8}},n)}^{n,y}}{Z_{n}}\Big|^p\Big]+\E\Big[\Big|\frac{Z_{n^{{1/8}}}}{Z_n}-1\Big|^p(\cev Z_{[n-n^{{1/8}},n)}^{n,y})^p\Big]\Big)\\
&\leq 2 p_n(y)^p\Big(\E\Big[Z_{n}^{-\frac{(1+\delta)p}{\delta}}\Big]^{\frac{\delta}{1+\delta}}\E\Big[\big|Z_{(0,n)}^{0,0;n,y}-Z_{n^{{1/8}}}\cev Z_{[n-{n^{{1/8}}},n)}^{n,y}\big|^{(1+\delta)p}\Big]^{\frac{1}{1+\delta}}\\
&\qquad\qquad +\E\Big[\Big|\frac{Z_{n^{{1/8}}}}{Z_n}-1\Big|^{\frac{(1+\delta)p}{\delta}}\Big]^{\frac{\delta}{1+\delta}} \E\Big[(\cev Z_{[n-n^{{1/8}},n)}^{n,y})^{(1+\delta)p}\Big]^{\frac{1}{1+\delta}}\Big)\\
&\leq C(p,\delta) n^{-dp/2} n^{-\eta},
\end{align*}
where in the last line we have used Lemma~\ref{lem:p}, \eqref{eq:union}, part (i) in this lemma and Proposition~\ref{prop:diff}(ii), and the exponent $\eta$ is positive and  depends only on $p_0$ but not on $p$ and $\delta$. Thus, part (ii) follows by choosing $\eps \coloneqq  \eta/2$ and $p<\p\wedge 2$ large enough  that $\frac{pd}2+\eta\geq \frac{(\p\wedge 2) d}2+\frac \eta {2}$.
\end{proof}

\begin{prop}\label{prop:alpha2}
Assume \eqref{eq:assumption} and \eqref{eq:concprop}. For any $\eps>0$ there exist $n_0\in\N$ and $C>0$ such that, for all $n\geq n_0$,
\begin{align}
    \sup_{y\in \Z^d}\mathbb{E}[\alpha_n(y)^2]&\leq Cn^{-\frac{d}{2}(\p\wedge 2)+\eps},\label{eq:max_alpha}\\
    \sum_{y\in\Z^d}\mathbb{E}[\alpha_n(y)^2]&\leq Cn^{-\frac d2((\p\wedge 2)-1)+\eps}\label{eq:sum_alpha}.
\end{align}
{In particular, for any $\eta\in(0,1)$ there exist $C,\varepsilon>0$ such that, for all $n\in\N$, }
\begin{align}\label{eq:partial_sum}
{\sum_{k\geq n^{\frac 1{\p\wedge2}+\eta}}\sum_{y\in\Z^d}\mathbb{E}[\alpha_k(y)^2]\leq
C n^{-\xi-\varepsilon}.}
\end{align}
\end{prop}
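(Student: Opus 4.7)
The plan is to prove the three bounds by comparing $\alpha_n(y)$ with the approximant $\widetilde\alpha_n(y)$ defined in \eqref{eq:def_alphatilde} via Lemma~\ref{lem:ratio}(ii), and then to estimate $\widetilde\alpha_n(y)$ directly using the uniform moment bound $\E[(\cev Z^{n,y}_{[n-n^{1/8},n)})^p]\leq C$ valid for $p<\p\wedge 2$.

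For \eqref{eq:max_alpha}, fix $y\in\Z^d$. If $|y|\leq n^{3/5}$, I would decompose
\[\E[\alpha_n(y)^2]\leq 2\E[(\alpha_n(y)-\widetilde\alpha_n(y))^2]+2\E[\widetilde\alpha_n(y)^2].\]
Lemma~\ref{lem:ratio}(ii) applied with $q=2\geq \p\wedge 2$ handles the first term, producing $Cn^{-d(\p\wedge 2)/2-\eps_0}$ for some fixed $\eps_0>0$. For the second, since $\widetilde\alpha_n(y)\leq 1$ we have $\widetilde\alpha_n(y)^2\leq \widetilde\alpha_n(y)^p$ for any $p\in(1+2/d,\p\wedge 2)$, and
\[\E[\widetilde\alpha_n(y)^p]\leq p_n(y)^p\,\E[(\cev Z^{n,y}_{[n-n^{1/8},n)})^p]\leq Cp_n(y)^p\leq Cn^{-dp/2};\]
letting $p\uparrow \p\wedge 2$ gives $Cn^{-d(\p\wedge 2)/2+\eps}$ for any prescribed $\eps>0$. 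If $|y|>n^{3/5}$, I would use $\alpha_n(y)^2\leq \alpha_n(y)$, Cauchy--Schwarz, the uniform negative-moment bound from Theorem~\ref{thm: lower tail concentration}, and Lemma~\ref{lem:compare}(i) to estimate $\E[Z_{n-1}[\1{X_n=y}]^2]\leq CP(X_n=y)^{1/k}$; by the Gaussian heat-kernel bound \eqref{eq:well_known_quadr}, $P(X_n=y)\leq e^{-cn^{1/5}}$ in this range, which is super-polynomially small.

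For \eqref{eq:sum_alpha}, introduce a cutoff $R=n^{1/2+\delta}$ with $\delta>0$ small. The tail $|y|>R$ is controlled by $\alpha_n(y)^2\leq \alpha_n(y)$, which sums to $\E[\alpha_n(\{|X_n|>R\})]$; Lemma~\ref{lem:compare}(ii) together with $P(|X_n|>R)\leq e^{-cn^{2\delta}}$ yields super-polynomial decay. For $|y|\leq R$ I apply the same triangle inequality. Summing the pointwise error bound $Cn^{-d(\p\wedge 2)/2-\eps_0}$ over $O(R^d)=O(n^{d/2+d\delta})$ lattice points gives $Cn^{-d((\p\wedge 2)-1)/2+d\delta-\eps_0}$, and choosing $\delta<\eps_0/(2d)$ absorbs the factor $R^d$ into the gain. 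For the main part,
\[\sum_{y\in\Z^d}\E[\widetilde\alpha_n(y)^p]\leq C\sum_y p_n(y)^p\leq C\big(\max_y p_n(y)\big)^{p-1}\leq Cn^{-d(p-1)/2},\]
and taking $p$ close to $\p\wedge 2$ yields the stated bound.

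For \eqref{eq:partial_sum}, apply \eqref{eq:sum_alpha} termwise and sum in $k$. Set $q=\p\wedge 2$; the assumption \eqref{eq:assumption} implies $d(q-1)/2>1$, so the tail series is summable and bounded by $Ck_0^{1-d(q-1)/2+\eps'}$ with $k_0=n^{1/q+\eta}$. A direct computation using $\xi=d/2-(1+d/2)/q$ verifies the identity $(1/q)(1-d(q-1)/2)=-\xi$, so the total exponent becomes
\[-\xi+\eta(1-d(q-1)/2)+(1/q+\eta)\eps'.\]
Since $1-d(q-1)/2<0$ strictly, the $\eta$-term is negative, and by choosing $\eps'$ sufficiently small relative to $\eta$ the combined error absorbs the $\eps'$ contribution, producing the required exponent $-\xi-\varepsilon$ for some $\varepsilon=\varepsilon(\eta)>0$. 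The main technical point throughout is careful book-keeping of exponents; the only place where subcritical weak disorder is actively used (beyond the inputs of Lemma~\ref{lem:ratio}) is in ensuring the strict inequality $d(\p\wedge 2-1)/2>1$, which makes the series in $k$ convergent and produces a strict gain in \eqref{eq:partial_sum}.
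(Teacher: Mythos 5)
Your overall strategy is sound and differs from the paper's. The paper proves \eqref{eq:max_alpha} by a direct tail estimate: it bounds $\mathbb{P}(\alpha_n(y)\geq u)$ using the $p$-th moment of the point-to-point partition function from Theorem~\ref{thmx:local} together with the lower-tail concentration of $Z_{n-1}$ from Theorem~\ref{thm: lower tail concentration}, and then integrates $2u\,\mathbb{P}(\alpha_n(y)\geq u)$ over $u\in[0,1]$; the far regime $|y|>\sqrt n\log n$ is handled by the Gaussian heat-kernel bound. You instead route everything through the approximant $\widetilde\alpha_n$ and Lemma~\ref{lem:ratio}(ii) (which is legitimately available, as that lemma precedes Proposition~\ref{prop:alpha2} and does not depend on it), and then estimate $\E[\widetilde\alpha_n(y)^p]\leq p_n(y)^p\,\E[(\cev Z^{n,y}_{[n-n^{1/8},n)})^p]$ using $\sup_m\E[Z_m^p]<\infty$ for $p<\p\wedge2$. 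This is a clean reuse of machinery already established and gives the same exponents; the paper's route is more self-contained but repeats tail computations. Your treatment of \eqref{eq:sum_alpha} (cutoff at $n^{1/2+\delta}$, $\delta$ small relative to the fixed gain $\eps_0$ of Lemma~\ref{lem:ratio}(ii)) and your derivation of \eqref{eq:partial_sum}, including the identity $\tfrac1q\bigl(1-\tfrac d2(q-1)\bigr)=-\xi$ with $q=\p\wedge2$ and the use of $\tfrac d2(q-1)>1$ from \eqref{eq:assumption}, are correct; the paper leaves \eqref{eq:partial_sum} implicit.

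One step is wrong as written: in the regime $|y|>n^{3/5}$ you invoke Lemma~\ref{lem:compare}(i) to claim $\E\bigl[Z_{n-1}[\1{X_n=y}]^2\bigr]\leq CP(X_n=y)^{1/k}$. That lemma only covers exponents $p\in[1,\p\wedge2)$, which never includes $p=2$, and indeed beyond the $L^2$-regime the second moment of $Z_{n-1}[\1{X_n=y}]$ need not be finite, so the inequality you state may simply be false. The repair is immediate: either replace Cauchy--Schwarz by H\"older with exponents $p<\p\wedge2$ and $p'$, bounding $\E[\alpha_n(y)]\leq \E\bigl[Z_{n-1}[\1{X_n=y}]^p\bigr]^{1/p}\E[Z_{n-1}^{-p'}]^{1/p'}\leq CP(X_n=y)^{1/(pk)}$, or apply Lemma~\ref{lem:compare}(ii) directly with $A=\{X_n=y\}$, which gives $\E[\alpha_n(y)]\leq Ce^{-(\log n)^\gamma/C}+nP(X_n=y)$; in either case the Gaussian bound \eqref{eq:well_known_quadr} makes this super-polynomially small for $|y|>n^{3/5}$. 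With that correction the argument goes through.
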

\begin{proof}
 We first consider $y\in\Z^d$ with $|y|>\sqrt{n}\log{n}$. Since $p_n(y)\leq C e^{-(\log{n})^2/C}$ due to  \eqref{eq:well_known_quadr},  by Theorem~\ref{thm: lower tail concentration} and $\mathbb{E}[Z_{(0,n)}^{0,0;n,y}]=1$, 
 for any $u\leq 1$,
\al{
\mathbb{P}(\alpha_n(y)\geq u)&\leq \mathbb{P}\Big(Z_{(0,n)}^{0,0;n,y}p_{n}(y)\geq u e^{-(\log{n})^{2/\gamma}}\Big)+\mathbb{P}\Big(  Z_{n-1}\leq e^{-(\log{n})^{2/\gamma}}\Big)\\
&\leq C u^{-1}  e^{-(\log{n})^2/C} e^{(\log{n})^{2/\gamma}}+C e^{-(\log{n})^{2}/C}\\
&\leq C u^{-1}e^{-(\log{n})^{2}/C}.
}
We thus have, for $n$ large enough,
\begin{equation}\label{eq:morethan}
\begin{split}
\mathbb{E}[\alpha_n(y)^2]&\leq 2\int_{0}^1 u \mathbb{P}(\alpha_n(y)\geq u)\dd u \leq C e^{-(\log{n})^{2}/C}.
\end{split}
\end{equation}
which is more than sufficient for \eqref{eq:max_alpha}. Next, we consider $|y|\leq \sqrt{n}\log{n}$. We fix  $p\in(1,\p\wedge 2)$ arbitrary. By {Theorem~\ref{thmx:local}},    for any $u>0$,
\aln{\label{p-the moment of p2p}
\mathbb{P}(Z_{(0,n)}^{0,0;n,y}\geq u)\leq u^{-p}\mathbb{E}[(Z_{(0,n)}^{0,0;n,y})^p]\leq C u^{-p}.
}
Due to Theorem~\ref{thm: lower tail concentration}, since $p_n(y)\leq Cn^{-d/2}$ by Lemma~\ref{lem:p}(0), we have for any $u\leq 1$,
\al{
\mathbb{P}(\alpha_n(y)\geq u)&\leq \mathbb{P}(Z_{(0,n)}^{0,0;n,y}\geq C^{-1} u n^{d/2}e^{-(\log{n})^{{2/(1+\gamma)}}})+\mathbb{P}( Z_{n-1}\leq e^{-(\log{n})^{2/(1+\gamma)}})\\
&\leq C u^{-p} n^{-pd/2}e^{p(\log{n})^{2/(1+\gamma)}}+Ce^{-C^{-1}(\log{n})^{2\gamma/(1+\gamma)}}.
}
Therefore, since $\gamma>1$, for $n\in\N$ large enough, we have uniformly in $y$,
\al{
\mathbb{E}[\alpha_{n}(y)^2]&=\int_0^1 2u\mathbb{P}(\alpha_n(y)\geq u){\rm d} u\\
&\leq 2C n^{-pd/2}e^{p(\log{n})^{2/(1+\gamma)}}\int_0^1 u^{1-p}{\rm d} u+2C e^{-C^{-1}(\log{n})^{2\gamma/(1+\gamma)}}\\
&\leq \frac{4C}{2-p} n^{-pd/2}e^{p(\log{n})^{2/(1+\gamma)}}.
}
By choosing $p$ sufficiently close to { $\p\wedge 2$}, we see that the final expression is bounded by { $Cn^{-\frac {(\p\wedge 2) d}2+\eps}$} for all $n$ large enough, which proves \eqref{eq:max_alpha}. Next, we prove \eqref{eq:sum_alpha}. By \eqref{eq:max_alpha}, we have
\al{
\E\Big[\sum_{|y|\leq n^{1/2}\log n} \alpha_{n}(y)^2\Big]\leq C(\log n)^d\,n^{\frac{d}{2}-\frac{(\p\wedge 2) d}{2}+\eps}.
}
On the other hand, using \eqref{eq:morethan}, 
\begin{align*}
\E\Big[\sum_{|y|> n^{1/2}\log n} \alpha_{n}(y)^2\Big]\leq C n^d \max_{|y|>n^{1/2}\log(n)}\mathbb{E}[\alpha_n(y)^2]\leq C n^d e^{-(\log{n})^{2}/C},
\end{align*}
which is again more than sufficient for \eqref{eq:sum_alpha}.
\end{proof}

\subsection{Approximations for the $\log$-partition function}
{In this subsection we derive an estimate similar to Lemma~\ref{lem:compare}(ii) for the logarithm of the partition function. Namely, we show that the logarithm partition function essentially depends only on the environment in a diffusive space-time window and that the effect from the outside of this window is very small. This is very technical but crucial for the analysis of the KPZ equation.}

\begin{prop}\label{prop: log correlation}
Assume \eqref{eq:assumption} and \eqref{eq:concprop}. 
\begin{enumerate}
    \item[(i)] For any $m\in\N$,
    \aln{\label{Eq: finite log moment}
    \sup_{n\in\N}\mathbb{E}[|\log{Z_n}|^m]<\infty.
    }
\item[(ii)]
There exists $C>0$ such that, for any $x,y\in\Z^d$, $\eta>0$, and any $n\in\N$ large enough,
\begin{alignat*}{2}
|\mathbb{E}[ A_n(x)A_n(y)]|
    &\leq Cn^{\frac{d}{2}-\frac{(\p\wedge 2) d}{2}+\eta} &\qquad\text{ if }|x-y|\leq \sqrt{n}\,\log{n},\\
        |\mathbb{E}[ A_n(x)A_n(y)]|&\leq Ce^{-(\log{n})^{\gamma}/C} &\qquad\text{ if }|x-y|> \sqrt{n}\,\log{n},
    \end{alignat*}
    where $A_n(x)$ is either $\log \frac{Z^x_{n}}{Z^x_{n-1}}-\mathbb{E}[\log \frac{Z^x_{n}}{Z^x_{n-1}}|\F_{n-1}]$ or $\mathbb{E}[\log \frac{Z^x_{n}}{Z^x_{n-1}}|\F_{n-1}]- \mathbb{E}[\log \frac{Z^x_{n}}{Z^x_{n-1}}]$.
\end{enumerate}
\end{prop}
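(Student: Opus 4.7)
I would bound $\mathbb{E}[|\log Z_n|^m]$ by treating the positive and negative parts separately. For the positive part $(\log Z_n)_+$, the elementary inequality $(\log x)_+^m \leq C_{m,\varepsilon}\, x^\varepsilon$ combined with $\sup_n \mathbb{E}[Z_n^{m\varepsilon}] < \infty$ (for $\varepsilon > 0$ small enough that $m\varepsilon < \p(\beta)$, which holds under \eqref{eq:assumption}) gives the needed bound. For the negative part $(\log Z_n)_-$, Theorem~\ref{thm: lower tail concentration} directly yields $\mathbb{P}((\log Z_n)_- \geq t) \leq Ce^{-t^\gamma/C}$ uniformly in $n$, and integrating tails delivers control of all moments.

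\textbf{Decomposition for part (ii).} The starting point is the identity $Z_n^x/Z_{n-1}^x = 1 + R_n(x)$ with
\[
R_n(x) := \sum_{y \in \Z^d} \alpha_n(x, y)\, E_{n, y},
\]
a sum whose coefficients are $\F_{n-1}$-measurable and whose noise variables $E_{n,y}$ are centered, independent across $y$, and independent of $\F_{n-1}$. Formally Taylor expanding $\log(1+r) = r - r^2/2 + \rho(r)$ and subtracting conditional and unconditional means, using $\mathbb{E}[R_n(x) \mid \F_{n-1}] = 0$, yields
\[
A_n^M(x) = R_n(x) - \tfrac12\bigl(R_n(x)^2 - \mathbb{E}[R_n(x)^2 \mid \F_{n-1}]\bigr) + \text{(centered remainder)},
\]
\[
A_n^P(x) = -\tfrac12\bigl(\mathbb{E}[R_n(x)^2 \mid \F_{n-1}] - \mathbb{E}[R_n(x)^2]\bigr) + \text{(centered remainder)}.
\]

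\textbf{Small- and large-distance estimates.} For $|x - y| \leq \sqrt n \log n$, I would apply Cauchy--Schwarz to reduce to $\mathbb{E}[A_n(x)^2]$. For the martingale case, independence of the $E_{n,y}$'s gives $\mathbb{E}[R_n(x)^2] = \sigma_\beta^2 \sum_y \mathbb{E}[\alpha_n(x, y)^2]$ with $\sigma_\beta^2 := \mathrm{Var}(E_{1,0})$, which is of the desired order $n^{d/2 - (\p\wedge 2)d/2 + \eta}$ by Proposition~\ref{prop:alpha2}, while the quadratic correction contributes the same or smaller order since $\alpha_n \leq 1$ implies $\alpha_n^k \leq \alpha_n^2$ for $k \geq 2$. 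For the predictable case, writing $\mathbb{E}[R_n(x)^2 \mid \F_{n-1}] = \sigma_\beta^2 S_n(x)$ with $S_n(x) := \sum_y \alpha_n(x, y)^2 \in [0, 1]$, the bound $\mathbb{E}[S_n(x)^2] \leq \mathbb{E}[S_n(x)] = \sum_z \mathbb{E}[\alpha_n(0, z)^2]$ finishes the argument via Proposition~\ref{prop:alpha2}. For $|x - y| > \sqrt n \log n$, the strategy is to exploit near-independence. Using Lemma~\ref{lem:ratio} to replace $\alpha_n(x, y)$ by $\widetilde\alpha_n(x, y)$ and invoking the Gaussian decay of $p_n$ together with the super-diffusive tail bound \eqref{eq:morethan}, one sees that $A_n(x)$ is, up to stretched-exponentially small error, a function of the environment in a box of radius $\sqrt n \log n$ around $(n, x)$, and likewise for $A_n(y)$. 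When $|x - y| > \sqrt n \log n$, suitable localization makes these regions disjoint, so the truncated versions of $A_n(x)$ and $A_n(y)$ become exactly independent and, being centered, contribute nothing to the covariance; the truncation error is controlled by \eqref{eq:morethan} together with a union bound over the at-most $O(n^d)$ relevant positions.

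\textbf{Main obstacle.} The most delicate step is controlling the higher-order Taylor remainders $\rho(R_n(x))$ and $\mathbb{E}[\rho(R_n(x)) \mid \F_{n-1}]$, since $R_n(x)$ need not be small --- the noise $E_{n, y}$ is unbounded in general. My plan is to split on $\{|R_n(x)| \leq 1/2\}$, where $|\rho(r)| \leq C|r|^3$ and control reduces to higher $L^p$-moments of $R_n$ obtainable by applying Lemma~\ref{lem:bh} (Burkholder) to the sum defining $R_n$ together with the finite exponential moments \eqref{eq:expmom} of $E_{n,y}$, and its complement, which has stretched-exponentially small probability by the same Burkholder-type tail estimate and on which the integrability of $\log(1 + R_n(x))$ itself follows from part~(i) applied to $\log Z_n^x$ and $\log Z_{n-1}^x$. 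A parallel truncation handles the remainder in the predictable decomposition.
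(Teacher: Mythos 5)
Part (i) is correct and essentially the paper's argument (the paper uses $\mathbb{P}(\log Z_n\geq t)\leq e^{-t}$ by Markov instead of your $x^\eps$ bound, but this is immaterial). For part (ii), your overall architecture — Cauchy--Schwarz reducing the near-diagonal case to $\mathbb{E}[A_n(x)^2]$, and localization plus exact independence of centered truncations for $|x-y|>\sqrt n\log n$ — coincides with the paper's. The paper localizes by restricting the polymer paths to a tube of radius $\sqrt n\log n/2$ rather than via $\widetilde\alpha$, but that difference is cosmetic.

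The genuine gap is in your treatment of the Taylor remainder, i.e., in bounding $\mathbb{E}\big[\big(\log(1+R_n(x))\big)^2\big]$ on the event where $R_n(x)$ is not small. You assert that $\{|R_n(x)|>1/2\}$ has stretched-exponentially small probability "by a Burkholder-type tail estimate"; this is false. The only available control is $\mathbb{P}(|R_n(x)|>1/2)\leq 4\,\mathbb{E}[R_n(x)^2]\leq C n^{\frac d2-\frac{(\p\wedge2)d}{2}+\eta}$, which is merely polynomially small — indeed it is of exactly the target order, since the event is driven by a single site with $\alpha_n(x,y)$ atypically large, and Proposition~\ref{prop:alpha2} gives only polynomial bounds there. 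Your fallback — Cauchy--Schwarz against the moments from part (i) — then yields a contribution of order $\mathbb{E}[(\log(1+R_n))^4]^{1/2}\,\mathbb{P}(|R_n|>1/2)^{1/2}\leq Cn^{\frac12(\frac d2-\frac{(\p\wedge2)d}{2})+\eta}$, i.e., only the \emph{square root} of the required bound (the exponent is negative, so halving it weakens it). This is not sufficient to prove the stated estimate. Note that the positive side $\{R_n>1/2\}$ is harmless via $\log(1+r)\leq\sqrt r$; the real problem is $\{1+R_n(x)\leq\eps\}$, where $|\log(1+R_n)|$ is unbounded. The paper resolves exactly this point in Proposition~\ref{prop:log}: on that event one writes $-\log(1+U)=-\log\big(\sum_y\alpha_n(x,y)\,e^{\beta\omega_{n,y}-\lambda(\beta)}\big)\leq-\sum_y\alpha_n(x,y)\log\big(e^{\beta\omega_{n,y}-\lambda(\beta)}\big)$ by Jensen, turning the logarithm of a weighted sum into a weighted sum of centered logarithms whose second moment is again $C\sum_y\alpha_n(x,y)^2$. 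Without this (or an equivalent) device your near-diagonal bound does not close.
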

\begin{proof}
For part (i), we first note that by the Markov inequality,  for any $t\geq 0$,
    $$\mathbb{P}(\log{Z_n}\geq t)=\mathbb{P}(Z_n\geq e^t)\leq e^{-t}\mathbb{E}[Z_n]=e^{-t}.$$
    Moreover, by Theorem~\ref{thm: lower tail concentration}, there exists $C>0$ independent of $n$ such that, for any $t\geq 0$
    $$\mathbb{P}(\log{Z_n}\leq -t)\leq C e^{-t^\gamma /C}.$$
    Therefore, $\sup_{n\in\N}\mathbb{P}(|\log Z_n|>t)$ decays super-polynomially as $t\to\infty$. Thus, the claim follows.
    
For the first line of part (ii), by the Cauchy-{Schwarz} inequality, it suffices to prove that the desired bound holds for  $\mathbb{E}[(\log{\frac{Z^x_{n}}{Z^x_{n-1}}})^2]$. Indeed, by \eqref{upper ineq for U},
\al{
\E\Big[\Big(\log{\frac{Z^x_{n}}{Z^x_{n-1}}}\Big)^2\Big]&=
\E\Big[\Big(\log{\Big(1+\sum_{y\in \Z^d} \alpha_{n}(x,y)E_{n,y}\Big)}\Big)^2\Big]\leq C \E\Big[\sum_{y\in \Z^d} \alpha_{n}(x,y)^2\Big].
}
By Proposition~\ref{prop:alpha2}, we have the first line bound.

\smallskip Next, we consider the second line, i.e., we assume $|x-y|> \sqrt{n}\,\log{n}$. The main idea is that the contribution to the partition functions from paths not confined within a cone of diameter $n^{1/2}\log{n}$ is negligible, so the two factors are approximately independent. More precisely, we introduce 
\begin{align*}
\hat{Z}^x_{n}&\coloneqq Z_n^x[\1{\,|X_k-x|\leq \sqrt{n}\,\log{n}/2\text{ for all }k\in{\llbracket 1,n\rrbracket}}],\\
\hat{Z}^x_{n-1}&\coloneqq Z_{n-1}^x[\1{\,|X_k-x|\leq \sqrt{n}\,\log{n}/2\text{ for all }k\in{\llbracket 1,n\rrbracket}}].\end{align*}
If $\hat A_n(x)$ is equal to either $\log \frac{\hat{Z}^x_{n}}{\hat{Z}^x_{n-1}}-\mathbb{E}[\log \frac{\hat{Z}^x_{n}}{\hat{Z}^x_{n-1}}|\F_{n-1}]$ or $\mathbb{E}[\log \frac{\hat{Z}^x_{n}}{\hat{Z}^x_{n-1}}|\F_{n-1}]- \mathbb{E}[\log \frac{\hat{Z}^x_{n}}{\hat{Z}^x_{n-1}}]$,  then $\hat A_n(x)$ and $\hat A_n(y)$ are centered and independent. Hence, by the Cauchy-Schwarz inequality, we have
\begin{align*}
|\mathbb{E}[A_n(x)A_n(y)]|&=\big|\mathbb{E}[A_n(x)A_n(y)-\hat A_n(x)\hat A_n(y)]\big|\\
&\leq \E\big[ |A_n(x)||A_n(y)-\hat A_n(y)|\big]+\E\big[ |\hat A_n(y)||A_n(x)-\hat A_n(x)|\big]\\
&\leq \big(\mathbb{E}[A_n(x)^2]^{1/2}+\mathbb{E}[\hat A_n(x)^2]^{1/2}\big)\mathbb{E}[(A_n(x)-\hat{A}_n(x))^2]^{1/2},
\end{align*}
where in the last line we have also used that the law{s} of $A_n(x)$ and $\hat A_n(x)$ {do} not depend on $x$. It is thus enough to obtain a bound for the last terms. Moreover, using the conditional Jensen inequality and Proposition~\ref{prop: log correlation}(i), it is enough to show that 
\begin{align}\label{eq:tildeA}
\mathbb{E}\Big[\Big(\log \frac{Z^x_{n}}{Z^x_{n-1}}-\log \frac{\hat{Z}^x_{n}}{\hat{Z}^x_{n-1}}\Big)^2\Big]&\leq C  e^{-(\log{n})^{\gamma}/C},\\
\E\Big[\Big(\log \frac{\hat{Z}^x_{n}}{\hat{Z}^x_{n-1}}\Big)^2\Big]&\leq C.\label{eq:tildeAA}
\end{align}
To prove \eqref{eq:tildeA}, recalling that $\log(1+x)\leq x^{1/2}$ for all $x\geq 0$, we have
\begin{align*}
\log{Z^x_{n}}-\log{\hat{Z}^x_{n}}&=\log{\Big(1+\frac{Z^x_{n}-\hat{Z}^x_{n}}{\hat{Z}^x_{n}}\Big)}\leq \Big(\frac{Z^x_{n}-\hat{Z}^x_{n}}{\hat{Z}^x_{n}}\Big)^{1/2}.
\end{align*}
The same argument applies to $Z^x_{n-1}$ and $\hat{Z}^x_{n-1}$. Writing $B\coloneqq \{\hat{Z}^x_{n}\leq 1/n\text{ or }\hat{Z}^x_{n-1}\leq 1/n\}$, we arrive at
\begin{equation}\label{eq:finally}\begin{split}
    &\mathbb{E}\Big[\Big(\log \frac{Z^x_{n}}{Z^x_{n-1}}-\log \frac{\hat{Z}^x_{n}}{\hat{Z}^x_{n-1}}\Big)^2\Big]\\
    &\leq 2 n\E\big[(Z^x_{n}-\hat{Z}^x_{n})\big]  +2 n\E\big[(Z^x_{n-1}-\hat{Z}^x_{n-1})\big]\\
    &\qquad +\mathbb{E}\Big[\Big(\log \frac{Z^x_{n}}{Z^x_{n-1}}-\log \frac{\hat{Z}^x_{n}}{\hat{Z}^x_{n-1}}\Big)^4\Big]^{1/2}\mathbb{P}(B)^{1/2}.
\end{split}\end{equation}
To control the first term on the  right-hand side, note that by Lemma~\ref{lem:compare},
\begin{align*}\mathbb{E}[Z^x_{n}-\hat{Z}^x_{n}]&= \mathbb{E}[{Z}_n^x[\mathbf{1}_{\{\exists k\in{\llbracket 1,n\rrbracket},\,|S_k-x|> \sqrt{n}\,\log{n}/2\}}]]\leq C e^{- (\log{n})^2/C}.
\end{align*}
The second term is bounded similarly. Moreover, from this bound and Theorem~\ref{thm: lower tail concentration}, we get
\begin{equation}\label{eq:probbound}\begin{split}
     \mathbb{P}(B)&\leq \mathbb{P}(Z_n<2/n)+\mathbb{P}(Z_{n-1}<2/n)+\mathbb{P}(Z_n-\hat Z_n\geq 1/n)+\mathbb{P}(Z_{n-1}-\hat Z_{n-1}\geq 1/n)\\
     &\leq C e^{-(\log{n})^\gamma /C}.
\end{split}\end{equation}
Next, let 
us define the probability measure        that is measurable in $\kF_{n-1}$:
\begin{align*}
\hat{\alpha}_{n}(y)\coloneqq  Z_{n-1}^x[\mathbf{1} \{|X_k-x|\leq \sqrt{n}\,\log{n}/2\text{ for all }k\in{\llbracket 1,n\rrbracket},\,X_n=y\}]/\hat{Z}^x_{n-1}.
\end{align*}    
 Since $\log x \leq \log(1+x)\leq x^{1/2}$ for $x>0$, by the Jensen  inequality,
    we have
\begin{align*}
 \sum_{y\in \Z^d} \hat{\alpha}_{n}(y) (\beta\omega_{n,y}-\lambda(\beta)) &\leq \log \frac{\hat{Z}^x_{n}}{\hat{Z}^x_{n-1}}=\log\left(\sum_{y\in \Z^d} \hat{\alpha}_{n}(y) e^{\beta\omega_{n,y}-\lambda(\beta)}\right)\\
 &\leq \left(\sum_{y\in \Z^d} \hat{\alpha}_{n}(y) e^{\beta\omega_{n,y}-\lambda(\beta)}\right)^{1/2}. 
    \end{align*}
Thus, since $\mathbb{E}[e^{\beta \omega_{n,y}-\lambda(\beta)}]=1$, by the Jensen inequality, we have 
\begin{align*}
   \E\Big[\Big(\log\frac{\hat{Z}^x_{n}}{\hat{Z}^x_{n-1}}\Big)^4\Big]\leq \mathbb{E}[(\beta \omega_{1,0}-\lambda(\beta))^4]+\mathbb{E}[e^{2\beta\omega_{1,0}-2\lambda(\beta)}]<\infty.
\end{align*}
This implies \eqref{eq:tildeAA}. Moreover, $\mathbb{E}[(\log \frac{Z^x_{n}}{Z^x_{n-1}})^4]$ is independent of $x$ and bounded in $n$  due to \eqref{Eq: finite log moment}. By \eqref{eq:probbound}, the final term in \eqref{eq:finally} is bounded by $Ce^{-(\log n)^\gamma/C}$. This finishes the proof of \eqref{eq:tildeA}. 
\end{proof}

\subsection{Homogenization at small times}
The following result shows that two quantities of interest (related to $S_n(f)$ and $K_n(f)$) homogenize if their time-horizon is of order $n^{1-\delta}$ with some $\delta>0$ and a spatial average is taken over the diffusive scale $n^{1/2}$. In the case of $S_n(f)$, the quantity of interest is simply the partition function itself, whereas in the case of $K_n(f)$, we bound the main contribution of the martingale part of $\log Z_n^x$, namely
\begin{align}\label{eq:def_Khat}
	\widehat K_n(x)\coloneqq \sum_{k=1}^n\sum_{y\in\Z^d} \alpha_k(x,y)E_{k,y}.
\end{align}
We obtain the following bound.
\begin{lem}\label{lem: martingale estimate until Ln}
 Assume \eqref{eq:assumption} and \eqref{eq:concprop}.   For any $\delta>0$, there exists $\eta>0$ such that, for all $n\in\N$ large enough and all $0\leq m< M\leq n^{1-\delta}$
    \begin{align}
        \E\Big[\Big|n^{-d/2}\sum_{x\in\Z^d}f(x/\sqrt n)(Z_M^x-Z_m^x)\Big|\Big]\leq n^{-\xi-\eta},\label{eq:aa}\\
   \E\Big[\Big|n^{-d/2}\sum_{x\in\Z^d} f(x/\sqrt n)(\widehat K_M(x)-\widehat K_m(x))\Big|\Big]\leq n^{-\xi-\eta}.\label{eq:bb}
    \end{align}
\end{lem}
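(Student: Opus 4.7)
Both bounds follow by the same strategy: write each quantity as a martingale in the time parameter $k$, apply Burkholder's inequality (Lemma~\ref{lem:bh}) at some $p\in(1+2/d,\p\wedge 2)$ chosen close to $\p\wedge 2$, and control the conditional increments via the local limit theorem (Theorem~\ref{thmx:local}) for the point-to-point partition function, together with the negative-moment bound (Theorem~\ref{thm: lower tail concentration}).

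For \eqref{eq:aa}, decompose $Z_k^x-Z_{k-1}^x=\sum_y Z_{k-1}^x[\mathbf{1}_{X_k=y}]E_{k,y}$ and factor $Z_{k-1}^x[\mathbf{1}_{X_k=y}]\propto p_k(y-x)\,\zeta_k^{x,y}$, where $\zeta_k^{x,y}$ denotes the point-to-point partition function with endpoints $(0,x)$ and $(k,y)$. Multiplying by $n^{-d/2}f(x/\sqrt n)$ and summing over $x$ produces a martingale representation $N^{\mathrm{SHE}}:=\sum_{k=m+1}^M\sum_y A_{k,y}E_{k,y}$ with $\F_{k-1}$-measurable $A_{k,y}:=c\,n^{-d/2}\sum_{|x|\leq R\sqrt n}f(x/\sqrt n)\,p_k(y-x)\,\zeta_k^{x,y}$ (here $R$ is chosen with $\operatorname{supp}f\subseteq[-R,R]^d$). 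The same manipulation rewrites the left-hand side of \eqref{eq:bb} as $N^{\mathrm{KPZ}}:=\sum_{k=m+1}^M\sum_y B_{k,y}E_{k,y}$ with $B_{k,y}:=n^{-d/2}\sum_{|x|\leq R\sqrt n}f(x/\sqrt n)\,\alpha_k(x,y)$.

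Fix $p\in(1+2/d,\p\wedge 2)$ to be chosen at the end. By \eqref{eq:burkholder1}, $\E[|N^{\mathrm{SHE}}|^p]\leq C\sum_{k,y}\E[|A_{k,y}|^p]$. Since $x\mapsto p_k(y-x)$ is a probability measure, Jensen's inequality yields
\[
|A_{k,y}|^p\leq C n^{-pd/2}\|f\|_\infty^p\sum_{|x|\leq R\sqrt n}p_k(y-x)(\zeta_k^{x,y})^p.
\]
Theorem~\ref{thmx:local} provides the uniform bound $\E[(\zeta_k^{x,y})^p]\leq C$ on the diffusive range $|y-x|\leq rk$, while a standard large-deviation argument (similar to the truncation used in the proof of Proposition~\ref{prop:diff}(i)) handles the complement. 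Performing the sums $\sum_y p_k(y-x)=1$, $\sum_{|x|\leq R\sqrt n}1\leq Cn^{d/2}$, and $\sum_{k=m+1}^M1\leq n^{1-\delta}$, we obtain
\[
\sum_{k,y}\E[|A_{k,y}|^p]\leq Cn^{1-\delta-(p-1)d/2},\qquad\text{hence}\qquad \E[|N^{\mathrm{SHE}}|]\leq\big(\E[|N^{\mathrm{SHE}}|^p]\big)^{1/p}\leq Cn^{(1+d/2-\delta)/p-d/2}.
\]
For \eqref{eq:bb} one writes $\alpha_k(x,y)=p_k(y-x)\zeta_k^{x,y}/Z_{k-1}^x$: H\"older's inequality with some $q>1$ satisfying $pq<\p\wedge 2$ separates $\E[(\zeta_k^{x,y})^{pq}]\leq C$ (Theorem~\ref{thmx:local}) from $\E[(Z_{k-1}^x)^{-pq'}]\leq C$ (Theorem~\ref{thm: lower tail concentration}), yielding $\E[\alpha_k(x,y)^p]\leq Cp_k(y-x)^p$, after which the computation for $N^{\mathrm{KPZ}}$ is word-for-word identical.

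To conclude, recall $\xi=d/2-(1+d/2)/(\p\wedge 2)$, so that $\E[|N|]\leq n^{-\xi-\eta}$ is equivalent to $(1+d/2-\delta)/p\leq (1+d/2)/(\p\wedge 2)-\eta$. At $p=\p\wedge 2$ this holds with slack $\delta/(\p\wedge 2)>0$; taking $p$ slightly less than $\p\wedge 2$ preserves most of this slack and furnishes some $\eta=\eta(\delta)>0$. The main obstacle is that $\xi$ sits exactly at the threshold of a $(\p\wedge 2)$-th moment argument: both the Burkholder inequality and the local limit theorem must be applied with $p$ strictly less than $\p\wedge 2$, so the computation barely closes on its own. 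The hypothesis $M\leq n^{1-\delta}$ is exactly what supplies the $\delta/(\p\wedge 2)$ margin needed to extract a strict $\eta>0$.
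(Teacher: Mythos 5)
Your overall strategy coincides with the paper's: both estimates are obtained by applying Burkholder's inequality (Lemma~\ref{lem:bh}) at some $p<\p\wedge2$, bounding the $p$-th moments of the $\F_{k-1}$-measurable coefficients uniformly in $k$ and $y$, and using the counting $\sum_{k=m+1}^M 1\leq n^{1-\delta}$ to turn the slack $\delta/(\p\wedge2)$ into a strict $\eta>0$; your exponent bookkeeping at the end is exactly the paper's. For \eqref{eq:bb} the paper uses the factorized bound \eqref{eq:burkholder3} with $A'_{k,y}=\sup_{|x|\leq Ln^{1/2}}(Z_{k-1}^x)^{-1}$ controlled by \eqref{eq:union}, whereas you apply H\"older pointwise to $\zeta_k^{x,y}/Z_{k-1}^x$; both variants work.

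The one step that does not close as written is your treatment of the complement of the diffusive window. You route the coefficient bound through the point-to-point partition function and Theorem~\ref{thmx:local}, which is only available for $|y-x|\leq rk$, and you propose to dispose of $|y-x|>rk$ by "a standard large-deviation argument as in Proposition~\ref{prop:diff}(i)". In Proposition~\ref{prop:diff}(i) that truncation works because the time window starts at a large time $n$, so $P(\exists k\geq n:|X_k|>rk)\leq e^{-cn}$; here $m$ may equal $0$, and the crude $L^1$ bound over the super-diffusive region only gives $\sum_{k\geq 1}P(|X_k|>rk)=O(1)$, which is not $o(n^{-\xi})$ — the terms with $k=O(\log n)$ and $|y-x|>rk$ are covered neither by Theorem~\ref{thmx:local} nor by any useful smallness of $p_k(y-x)$. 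The paper avoids this entirely: after the harmless truncation $|y|\geq n^{1/2}\log n$ (a fixed spatial scale, killed by \eqref{eq:well_known_quadr} since $k\leq n^{1-\delta}$), it bounds $\big|\sum_x f(x/\sqrt n)Z_{k-1}^x[\1{X_k=y}]\big|\leq\|f\|_\infty\,\cev Z_{(0,k)}^{k,y}$ by the plane-to-point (reverse) partition function, whose $p$-th moment is bounded for every $y$ and every $k$ directly from the definition of $\p$, with no spatial restriction and no local limit theorem. Your argument is repaired either by adopting that bound, or by handling the scales $k\leq(\log n)^2$ separately; as stated, the large-deviation step fails for small $k$.
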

{Note that by taking $m=0$ and $M=n^{1-\delta}$ in \eqref{eq:aa}, we have proved \eqref{eq:SHE_small}.}
\begin{proof}
In both cases, it is easy to take care of the contributions from $|y|\geq n^{1/2}\log n$. Indeed, in the case of \eqref{eq:aa}, we note that $Z_M^x-Z_{ m}^x=\sum_{k=m+1}^M\sum_{y\in\Z^d}Z_{k-1}^x[\1{X_k=y}]E_{k,y}$ and that
\begin{align*}
&	\E\Big[\Big|n^{-d/2}\sum_{x\in\Z^d} f(x/\sqrt n)\sum_{k=m+1}^{M} \sum_{|y|\geq n^{1/2}\log n} Z_{k-1}^{x}[\1{X_k=y}]E_{k,y}\Big|\Big]\\
 &\leq C{n^{-d/2}} \sum_{k=m+1}^{M} \sum_{|y|\geq n^{1/2}\log n,|x|\leq Ln^{1/2}} p_k(x,y),
\end{align*}
where $L$ is chosen large enough that the support of $f$ is contained in $[-L,L]^d$. The expression above clearly decays super-polynomially. Similarly,
\begin{align*}
&\E\Big[\Big|n^{-d/2}\sum_{x\in\Z^d} f(x/\sqrt n)\sum_{k=m}^{M} \sum_{|y|>n^{1/2}\log n } \alpha_k(x,y)E_{k,y}\Big|\Big]\leq C{n^{1-d/2}}\max_{|x|\leq Ln^{1/2},|y|\geq n^{1/2}\log n, k\leq M}\mathbb{E}[\alpha_k(x,y)],
\end{align*}
which decays again super-polynomially due to Lemma~\ref{lem:compare}.

For the bulk contribution of the first term in \eqref{eq:aa}, applying Lemma~\ref{lem:bh} with {$I\coloneqq \llbracket m+1,M\rrbracket\times \llbracket - n^{1/2}\log n,n^{1/2}\log n\rrbracket^d$,}
$A_{k,y}\coloneqq \sum_{x\in\Z^d}f(x/\sqrt n)Z_{k-1}^{x}[\1{X_k=y}]$ 
gives
\begin{align*}
    &\E\Big[\Big|\sum_{x\in\Z^d} f(x/\sqrt n)\sum_{k=m+1}^{M} \sum_{|y|\leq n^{1/2}\log{n}} Z_{k-1}^{x}[\1{X_k=y}]E_{k,y}\Big|^p\Big]\\
    &\leq C\sum_{k=m+1}^M\sum_{|y|\leq  n^{1/2}\log n}\E\Big[\Big|\sum_{x\in\Z^d}f(x/\sqrt n)Z_{k-1}^{x}[\1{X_k=y}]\Big|^p\Big]\\
    &\leq C \|f\|_\infty \sum_{k=m+1}^M\sum_{|y|\leq  n^{1/2}\log n}\E\Big[\Big(\cev Z_{(0,k)}^{k,y}\Big)^p\Big].
\end{align*}
Hence, we see that for $p\in(1,\p\wedge 2)$, using $\mathbb{E}[|X|]\leq \mathbb{E}[|X|^p]^{1/p}$ and $\sup_{y\in\Z^d,k\in\N}\mathbb{E}[(\cev Z_{(0,k)}^{k,y})^p]<\infty$ by definition of $\p$, the left-hand side of \eqref{eq:aa} is bounded by $C n^{-\frac{d}{2}+\frac{d}{2p}}(M-m)^{1/p}(\log n)^{d/p}$. Since $M-m\leq n^{1-\delta}$ by assumption, the claim thus follows by choosing $p$ sufficiently close to $\p\wedge 2$. For \eqref{eq:bb}, we apply Lemma~\ref{lem:bh}  with 
\begin{align*}
	A_{k,y}^1&\coloneqq \sup_{|x|\leq Ln^{1/2}} (Z^x_{k-1})^{-1},\\
	A_{k,y}^2&\coloneqq \sum_{x\in\Z^d}f(x/\sqrt n)Z_{k-1}^{x}[\1{X_k=y}].
\end{align*}
We see that the left-hand side in \eqref{eq:bb} with the constraint $|y|\leq n^{1/2}\log n$ is bounded by
\begin{align*}
	C n^{-d/2} \E\Big[\sup_{|x|\leq Ln^{1/2},k\geq 0}(Z^x_k)^{-p'}\Big]^{1/p'}\Big(\sum_{k=m}^M\sum_{|y|\leq n^{1/2}\log n}\E\Big[\Big|\sum_{x\in\Z^d}f(x/\sqrt n)Z_{k-1}^{x}[\1{X_k=y}]\Big|^p\Big]\Big)^{1/p},
\end{align*}
where $\frac{1}{p}+\frac{1}{p'}=1$ with $p\in (1,\p\wedge 2)$. The second term is the same as before, and by \eqref{eq:union}, the first expectation is of order $O(n^{\varepsilon})$ for arbitrarily small $\varepsilon>0$.
\end{proof}

\section{Approximation of SHE}\label{sec:she}

{Recall that we have already proved that the contribution from small times to SHE is negligible, i.e., \eqref{eq:SHE_small}, in Lemma~\ref{lem: martingale estimate until Ln}. Thus,  it only remains to show that the contribution from large times is well-approximated by $M_n^{\delta}$. We assume \eqref{eq:assumption}, \eqref{eq:concprop}, and  $\delta\in (0,1/6)$ in this section.}
\begin{proof}[Proof of \eqref{eq:SHE_large}]
	We have
	\begin{equation}\label{eq:sdadsa}\begin{split}
	\S^\delta_n(f)-\M_n^{\delta}(f) &= 
			 n^{-d/2}\sum_{x\in\Z^d} f(x/\sqrt n)\sum_{k=n^{1-\delta}+1}^n \sum_{y\in\Z^d} p_k(x,y) (Z_{(0,k)}^{0,x;k,y}-\cev Z_{[k-k^{1/8},k)}^{k,y} )E_{k,y}.
\end{split}\end{equation}
The contribution from $|y|\geq n^{1/2}\log n$ can easily be shown to decay super-polynomially by a simple $L^1$-bound and \eqref{eq:well_known_quadr}, as in the proof of Lemma~\ref{lem: martingale estimate until Ln}. We now consider the remaining contribution in \eqref{eq:sdadsa}. Let $L$ be large enough that the support of $f$ is contained in $[-L,L]^d$. Let $p\in (1+d/2,\p\wedge 2)$. By using $(a+b)^p\leq 2 a^p + 2b^p$ for $a,b\geq 0$ and $p\in [1,2]$, we can compute
\begin{align}
	&\E\Big[\Big|\sum_{x\in\Z^d} f(x/\sqrt n)\sum_{k=n^{1-\delta}+1}^n \sum_{|y|\leq n^{1/2}\log n} p_k(x,y) (Z_{(0,k)}^{0,x;k,y}-\cev Z_{[k-k^{1/8},k)}^{k,y} )E_{k,y}\Big|^p\Big]\notag\\
	&\leq 2\E\Big[\Big|\sum_{k=n^{1-\delta}+1}^n \sum_{|y|\leq n^{1/2}\log n} \big(\cev Z_{[k-k^{1/8},k)}^{k,y}[f(X_0/\sqrt n)]-\cev Z_{(0,k)}^{k,y}[f(X_0/\sqrt n)]\big)E_{k,y}\Big|^p\Big]\notag\\
	&\qquad + 2\E\Big[\Big|\sum_{k=n^{1-\delta}+1}^n \sum_{|y|\leq n^{1/2}\log n} \Big( \sum_{x\in\Z^d} f(x/\sqrt n) p_k(x,y)\cev Z_{[k-k^{1/8},k)}^{k,y}-\cev Z_{[k-k^{1/8},k)}^{k,y}[f(X_0/\sqrt n)] \Big)E_{k,y}\Big|^p\Big].\label{eq:second_term}
\end{align}
Setting {$I\coloneqq \llbracket n^{1-\delta}+1,n\rrbracket\times\llbracket-n^{1/2}\log n,n^{1/2}\log n\rrbracket^d$ and}
\begin{align*}
    A_{k,y}\coloneqq (\cev Z_{[k-k^{1/8},k)}^{k,y}[f(X_0/\sqrt n)]-\cev Z_{(0,k)}^{k,y}[f(X_0/\sqrt n)]),
\end{align*}Lemma~\ref{lem:bh} shows that the first term is bounded by
\begin{align*}
    &C\sum_{k=n^{1-\delta}+1}^n \sum_{|y|\leq n^{1/2}\log n}\mathbb{E}[|\cev Z_{[k-k^{1/8},k)}^{k,y}[f(X_0/\sqrt n)]-\cev Z_{(0,k)}^{k,y}[f(X_0/\sqrt n)]|^p]\\
    &\leq C n^{\frac{2+d}2 }(\log n)^d n^{-(1-\delta)(\frac d2(p-1)+1)/8},
\end{align*}
where the last bound is due to Proposition~\ref{prop:diff}({i}).  For the second term, by \eqref{eq:ratio_srw}, we observe that, for any $|x|\leq Ln^{1/2}$, $k\in \Iintv{ n^{1-\delta},n}$, $|y|\leq n^{1/2}\log n$, and  $z\in\Z^d$ with $(0,x)\leftrightarrow(k,y)\leftrightarrow (k-k^{1/8},z)$,
\begin{align*}
	\Big|\frac{p_{k-k^{1/8}}(z-x)}{p_k(x,y)}-1\Big| 
    \leq {k^{-1/5}\leq n^{-1/6}}.
\end{align*}
Note that,
\begin{align*}
&\Big|\sum_{x\in\Z^d} f(x/\sqrt n) p_k(x,y)\cev Z_{[k-k^{1/8},k)}^{k,y}-\cev Z_{[k-k^{1/8},k)}^{k,y}[f(X_0/\sqrt n)]\Big|\\
&=\Big|\sum_{z\in \Z^d}\cev Z_{[k-k^{1/8},k)}^{k,y}[\1{X_{k-k^{1/8}}=z}]\Big(\sum_{x\in\Z^d}f(x/\sqrt n) \big(p_k(x,y)-p_{k-k^{1/8}}(x,z)\big)\Big)\Big|\\
&\leq Cn^{-{1/6}}\sum_{z\in \Z^d}\cev Z_{[k-k^{1/8},k)}^{k,y}[\1{X_{k-k^{1/8}}=z}]\Big(\sum_{x\in\Z^d}|f(x/\sqrt n)| p_k(x,y)\Big)\\
&\leq C\|f\|_\infty n^{-{1/6}} \cev Z_{[k-k^{1/8},k)}^{k,y}.
\end{align*} 
We use Lemma~\ref{lem:bh} with $A_{k,y}\coloneqq \sum_{x\in\Z^d} f(x/\sqrt n) p_k(x,y)\cev Z_{[k-k^{1/8},k)}^{k,y}-\cev Z_{[k-k^{1/8},k)}^{k,y}[f(X_0/\sqrt n)]$ to bound the {term in \eqref{eq:second_term}} by
\begin{align*}
    Cn^{-{p/6}}\sum_{k=n^{1-\delta}+1}^n \sum_{|y|\leq n^{1/2}\log n}\mathbb{E}[|\cev Z_{[k-k^{1/8},k)}^{k,y}|^p]\leq C n^{\frac{2+d}2-\frac p6}(\log n)^d.
\end{align*}
Now, to conclude, fix $p_0\in(1+ 2/d,\p\wedge 2)$ and set $\eps\coloneqq \max\{{p_0/6},(\frac d2(p_0-1)+1){/16}\}$. We have thus shown that for any $p\in(p_0,\p\wedge 2)$ there exists $C(p)$ such that,
\begin{align*}
    \E |\S^\delta_n(f)-\M_n^{\delta}(f) | \leq C(p) n^{-\frac{d}2+\frac{1+d/2}{p}-\frac{\eps}p} \leq  C(p) n^{-\frac{d}2+\frac{1+d/2}{p}-\frac{\eps}2}.
\end{align*}
The claim follows by choosing $p$ sufficiently close to $\p\wedge 2$ so that $-\frac{d}2+\frac{1+d/2}{p}\leq -\xi + \frac \eps 4$.
\end{proof}

\section{Approximation of KPZ}\label{sec:kpz}

We assume \eqref{eq:assumption} and \eqref{eq:concprop} throughout this section. Let $\s\coloneqq 1/(\p\wedge 2)$.
\subsection{Contribution from large times}
We show that the contribution from large times to KPZ is well-approximated by $M_n^\delta$. 
\begin{proof}[Proof of \eqref{eq:KPZ_large}]
Recall the definition of $\widehat K_n(x)$ in \eqref{eq:def_Khat}. We first observe that 
\begin{align*}
&\E\Big[\Big|\log\frac{Z_n^{x}}{Z_{n^{1-\delta}}^{x}}-(\widehat{K}_n(x)-\widehat{K}_{n^{1-\delta}}(x))\Big|\Big]\\
&=\E\Big[\Big|\sum_{k=n^{1-\delta}+1}^n \log \frac{Z_k^x}{Z_{k-1}^x}-(\widehat{K}_n(x)-\widehat{K}_{n^{1-\delta}}(x))\Big|\Big]\\
&=\E\Big[\Big|\sum_{k=n^{1-\delta}+1}^n {\Big(}\log\big(1+\sum_{y\in\Z^d} \alpha_k(x,y)E_{k,y}\big)-\sum_{y\in\Z^d}\alpha_k(x,y)  E_{k,y}{\Big)}\Big|\Big]\\
&\leq \sum_{k=n^{1-\delta}+1}^n\E\Big[\Big| \log\big(1+\sum_{y\in\Z^d} \alpha_k(x,y)E_{k,y}\big)-\sum_{y\in\Z^d}\alpha_k(x,y)E_{k,y}\Big|\Big]\\
&\leq C \sum_{k=n^{1-\delta}+1}^n\sum_{y\in\Z^d}\mathbb{E}\left[\alpha_k(x,y)^2\right] = C \sum_{k=n^{1-\delta}+1}^n\sum_{y\in\Z^d}\mathbb{E}\left[\alpha_k(y)^2\right],
\end{align*}
where we have used Proposition~\ref{prop:log} in the last line. In particular,
\begin{align*}
	\Big|\E\Big[\log \frac{Z_n}{Z_{n^{1-\delta}}}\Big]\Big|&=\Big|\E\Big[\log \frac{Z_n}{Z_{n^{1-\delta}}}-(\widehat{K}_n(0)-\widehat{K}_{n^{1-\delta}}(0))\Big]\Big|\\
							       &\leq\E\Big[\Big|\log \frac{Z_n}{Z_{n^{1-\delta}}}-(\widehat{K}_n(0)-\widehat{K}_{n^{1-\delta}}(0))\Big|\Big]\\
							       &\leq C \sum_{k=n^{1-\delta}+1}^n\sum_{y\in\Z^d}\mathbb{E}\left[\alpha_k(y)^2\right].
\end{align*}
{By \eqref{eq:partial_sum}, there exists $\delta_0\in (0,1/6)$ such that for any $\delta\in(0,\delta_0)$, the last line is smaller than $Cn^{-\xi-\eta}$ for some $\eta = \eta(\delta_0)>0$.} 

Next, let $p\in (1+d/2,\p\wedge 2)$. By \eqref{eq:burkholder1}, we estimate
	\begin{equation}\label{eq: bukholdser1}
	    \begin{split}
	         &\mathbb{E}[|(\widehat K_n(x)-\widehat K_{n^{1-\delta}}(x))-M_n^\delta(x)|]\\
	&=\E\Big[\Big|\sum_{k=n^{1-\delta}+1}^n\sum_{y\in\Z^d} \Big(\alpha_k(x,y)-p_k(x,y) \cev Z_{[k-k^{1/8},k)}^{k,y}\Big)E_{k,y}\Big|\Big]\\
 & \leq \E\Big[\sum_{k=n^{1-\delta}+1}^n\sum_{|y{-x}|>\sqrt{k}\log k} \Big|\alpha_k(x,y)-p_k(x,y) \cev Z_{[k-k^{1/8},k)}^{k,y}\Big|\Big]\\
 &\qquad + C(p)  \left(\sum_{k=n^{1-\delta}+1}^n\sum_{|y{-x}|\leq \sqrt{k}\log k} \E\Big[\Big|\alpha_k(x,y)-p_k(x,y) \cev Z_{[k-k^{1/8},k)}^{k,y}\Big|^{p} \Big]\right)^{1/p}.
 \end{split}
 \end{equation}
For $|y-x|>\sqrt{k}\log k$, by Lemma~\ref{lem:compare}, we have
\al{
\E\Big|\alpha_k(x,y)-p_k(x,y) \cev Z_{[k-k^{1/8},k)}^{k,y}\Big|\leq \E\Big[\alpha_k(x,y)+p_k(x,y) \cev Z_{[k-k^{1/8},k)}^{k,y}\Big] \leq C e^{-  (\log k)^{\gamma}/C} + {2 k p_k(x,y)},
}
which decays super-polynomially in $k.$ For $|y-x|\leq \sqrt{k}\log k$, by Lemma~\ref{lem:ratio},
 we have
 \al{
 \E\Big[\Big|\alpha_k(x,y)-p_k(x,y) \cev Z_{[k-k^{1/8},k)}^{k,y}\Big|^{p}\Big]= \E\Big[\Big|\alpha_k(y-x)-p_k(y-x) \cev Z_{[k-k^{1/8},k)}^{k,y-x}\Big|^{p}\Big]\leq C(p) k^{-\frac{p d}{2} - \eps},
 } 
where $\eps$ is independent of $p.$ Thus, if we take $\delta>0$ small enough and $p$ close enough to $\p\wedge 2$ depending on $\eps$, then we further {bound the last term in} \eqref{eq: bukholdser1} from above by
\al{
  \left(C \sum_{k=n^{1-\delta}+1}^n k^{-\frac{p d}{2} + \frac{d}{2}  - \eps} (\log k)^{d} \right)^{1/p} \leq C n^{(1-\delta) (-\frac{d}{2} + \frac{d+2}{2 p}  - \frac{\eps}{2 p})} \leq   C n^{-\xi -\frac{\eps}{8}}.
}
\end{proof}
\subsection{Contribution from small times}
{In this section, we prove that the contribution from small times to KPZ is negligible, i.e., \eqref{eq:KPZ_small}}. This approximation is rather technical, and we split the argument into several steps. First,  we introduce the martingale part and the previsible part of the increment $\log Z^x_k-\log Z^x_{k-1}$:
\begin{align}
	\Dprev_k(x)&\coloneqq \E\Big[\log{\frac{Z^x_k}{Z^x_{k-1}}}\Big|~\kF_{k-1}\Big]-\E\Big[\log \frac{Z_k}{Z_{k-1}}\Big],\label{eq:def_kpz_prev}\\
	\Dmg_k(x)&\coloneqq \log{\frac{Z^x_k}{Z^x_{k-1}}}-\mathbb{E}\left[\log{\frac{Z^x_k}{Z^x_{k-1}}}\Big|~\kF_{k-1}\right].\label{eq:def_mg_prev}
\end{align}
Observe that
\begin{align*}
	k_n^{\delta}(f)&=n^{-d/2}\sum_{x\in\Z^d}f(x/\sqrt n)\Big(\sum_{k=1}^{n^{1-\delta}}\log \frac{Z^x_k}{Z^x_{k-1}}-\E\Big[\log\frac{Z_k}{Z_{k-1}}\Big]\Big)\\
		      &=n^{-d/2}\sum_{x\in\Z^d}f(x/\sqrt n)\sum_{k=1}^{n^{1-\delta}} \Dmg_k(x)+n^{-d/2}\sum_{x\in\Z^d}f(x/\sqrt n)\sum_{k=1}^{n^{1-\delta}}\Dprev_k(x).
\end{align*}
Thus,  \eqref{eq:KPZ_small} follows from the following bounds:
\begin{prop}
{Let $\delta \in (0,1/6)$ be such that $\s < 1-\delta$}. There exist $\rho \in(0,\s\wedge(1-\delta - \s))$, $\eps>0$, {and $C>0$} such that, the following hold for  $n$ large enough:
\begin{align}
	\E\Big[\Big|n^{-d/2}\sum_{x\in\Z^d}f(x/\sqrt n)\sum_{k=1}^{n^{1-\delta}} \Dmg_k(x) \Big|\Big]\leq {C}n^{-\xi-\eps},\label{eq:KPZ_mg_small}\\
	\E\Big[\Big|n^{-d/2}\sum_{x\in\Z^d}f(x/\sqrt n)\sum_{k=1}^{n^{\s-\rho}} \Dprev_k(x) \Big|\Big]\leq {C}n^{-\xi-\eps},\label{eq:KPZ_prev_very_small}\\
	\E\Big[\Big|n^{-d/2}\sum_{x\in\Z^d}f(x/\sqrt n)\sum_{n^{\s-\rho}+1}^{n^{\s+\rho}} \Dprev_k(x) \Big|\Big]\leq {C}n^{-\xi-\eps},\label{eq:KPZ_prev_intermediate_small}\\
	\E\Big[\Big|{n^{-d/2}\sum_{x\in\Z^d}f(x/\sqrt n)}\sum_{n^{\s+\rho}+1}^{n^{1-\delta}} \Dprev_k(x) \Big|\Big]\leq {C} n^{-\xi-\eps}.\label{eq:KPZ_prev_not_too_small}
\end{align}
\end{prop}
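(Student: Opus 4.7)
The four bounds are handled by Taylor-expanding $\log(Z_k^x/Z_{k-1}^x)=\log(1+V_k(x))$ around the linear martingale $V_k(x):=\sum_y\alpha_k(x,y)E_{k,y}$, then applying the Burkholder inequality (Lemma~\ref{lem:bh}) together with the moment bounds from Propositions~\ref{prop:alpha2} and~\ref{prop: log correlation}. Throughout, set $q:=\p\wedge 2$, so that $\s=1/q$ and $-\xi=-d/2+(d/2+1)\s$, and write $\sigma^2:=\mathbb{E}[E_{k,y}^2]$.

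\textbf{Martingale bound \eqref{eq:KPZ_mg_small}.} Decompose $\Dmg_k(x)=V_k(x)+R_k(x)$, where $R_k(x)$ collects the quadratic and higher Taylor terms together with their $\F_{k-1}$-conditional centerings. The linear contribution $n^{-d/2}\sum_x f(x/\sqrt n)\sum_{k\leq n^{1-\delta}}V_k(x)$ coincides with $n^{-d/2}\sum_x f(x/\sqrt n)\widehat K_{n^{1-\delta}}(x)$ and is thus bounded by $n^{-\xi-\eta}$ via \eqref{eq:bb} of Lemma~\ref{lem: martingale estimate until Ln} applied with $m=0$, $M=n^{1-\delta}$. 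For the remainder, expanding $V_k(x)^2=\sum_y\alpha_k(x,y)^2E_{k,y}^2+2\sum_{y<y'}\alpha_k(x,y)\alpha_k(x,y')E_{k,y}E_{k,y'}$ and subtracting the compensator $\sigma^2\sum_y\alpha_k(x,y)^2$ of the diagonal leaves the centered diagonal $\sum_y\alpha_k(x,y)^2(E_{k,y}^2-\sigma^2)$ and the off-diagonal chaos as clean martingale sums in the $E$-variables. Lemma~\ref{lem:bh} (applied to singletons $y$, respectively to enumerated pairs $(y,y')$) together with Proposition~\ref{prop:alpha2}, in particular the summability of $k^{-d(q-1)/2+\eta}$ valid for $q>1+2/d$, yields the desired $n^{-\xi-\eps}$. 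Cubic and higher Taylor terms are controlled on the high-probability event $\{|V_k(x)|\leq 1/2\}$ using Proposition~\ref{prop:alpha2}, with the complement handled by Theorem~\ref{thm: lower tail concentration}.

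\textbf{Previsible bounds.} The same Taylor expansion yields
\[
\Dprev_k(x)=-\frac{\sigma^2}{2}\bigl(A_k(x)-\mathbb{E}[A_k(0)]\bigr)+\text{higher order},
\]
with $A_k(x):=\sum_y\alpha_k(x,y)^2$ (using translation invariance to centre), the higher-order corrections being handled as in the martingale argument. The three bounds then reduce to controlling
\[
\Theta_n^I(f):=n^{-d/2}\sum_x f(x/\sqrt n)\sum_{k\in I}\bigl(A_k(x)-\mathbb{E}[A_k(0)]\bigr)
\]
over $I_1=\llbracket 1,n^{\s-\rho}\rrbracket$, $I_2=\llbracket n^{\s-\rho}+1,n^{\s+\rho}\rrbracket$, $I_3=\llbracket n^{\s+\rho}+1,n^{1-\delta}\rrbracket$. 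For $I_1$ I would use that $A_k(x)$ and $A_k(x')$ decouple for $|x-x'|\gg\sqrt k\log n$ (by the same localization argument as in Proposition~\ref{prop: log correlation}(ii)); combining the pair-count bound $O(n^{d/2}(\sqrt k\log n)^d)$ with $\mathbb{E}[A_k(0)^2]\leq\mathbb{E}[A_k(0)]\leq Ck^{-d(q-1)/2+\eta}$ produces $\mathbb{E}|\Theta_n^{\{k\}}(f)|\leq Cn^{-d/4}k^{d(2-q)/4+\eta/2}(\log n)^{d/2}$. Summing over $I_1$ and using $\s=1/q$, a short computation shows that the resulting exponent equals $-\xi-\rho(d(2-q)/4+1)+O(\eta)\leq-\xi-\eps$ for small $\rho,\eta$. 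For $I_3$ I would instead use Lemma~\ref{lem:ratio}(iii) to replace $\alpha_k(x,y)$ by $p_k(x,y)\cev Z^{k,y}_{[k-k^{1/8},k)}$, so that $A_k(x)\approx\sum_y p_k(x,y)^2(\cev Z^{k,y}_{[k-k^{1/8},k)})^2$; since the factors $(\cev Z^{k,y}_{[k-k^{1/8},k)})^2$ depend only on the thin time-slab $\llbracket k-k^{1/8},k)$ and are thus essentially independent across $y$ at fixed $k$, a direct variance computation using $\sum_x p_k(x,y)^2\leq Ck^{-d/2}$ gives $\mathbb{E}|\Theta_n^{\{k\}}(f)|\leq Cn^{-d/4}k^{-d/2}$. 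Summing over $I_3$, the sum is dominated by its lower endpoint and equals $Cn^{-d/4+(\s+\rho)(1-d/2)}=Cn^{-\xi-d(\s-1/4)-\rho(d/2-1)+o(1)}\leq Cn^{-\xi-\eps}$, since $\s\geq 1/2$ and $d\geq 3$.

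\textbf{Main obstacle.} The intermediate window $I_2$ is the most delicate: the decorrelation-based bound used on $I_1$, extended naively, gives exponent $-\xi+\rho(d(2-q)/4+1)>-\xi$, and the approximation error from Lemma~\ref{lem:ratio}(iii) underlying the estimate on $I_3$ is itself only marginally small at the transition scale $k\sim n^\s$. I would close $I_2$ by applying the independence-based argument to the whole union $I_2\cup I_3$ (the role of $I_2$ being to buffer the approximation error in the transition regime), supplemented by a Doob decomposition of $A_k(x)-\mathbb{E}[A_k(0)]$ relative to $(\F_j)_{j<k}$: the martingale part is controlled by Lemma~\ref{lem:bh} and the compensator by Proposition~\ref{prop:alpha2}. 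Finally, the parameters $\rho,\eta,\delta$ must be chosen small enough that all four bounds close simultaneously with a common $\eps>0$.
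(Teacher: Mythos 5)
Your treatment of $I_1$ is sound and matches the paper's computation (Cauchy--Schwarz in $k$ plus the spatial decorrelation from Proposition~\ref{prop: log correlation}(ii); your exponent indeed reduces to $-\xi-\rho(\cdot)$ at $\s=1/(\p\wedge2)$). But there are three genuine gaps. First, for the martingale remainder you claim that Burkholder plus ``the summability of $k^{-d(q-1)/2+\eta}$'' yields $n^{-\xi-\eps}$ over the whole range $k\le n^{1-\delta}$. Summability only gives $\sum_{k\ge1}\sum_y\E[\alpha_k(y)^2]=O(1)$, and since the spatial weight $n^{-d/2}\sum_x|f(x/\sqrt n)|$ is also $O(1)$, this produces an $O(1)$ bound, not $o(n^{-\xi})$. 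The decisive structural point is the split at $k\approx n^{\s}$: for $k\ge n^{\s+\rho}$ the tail bound \eqref{eq:partial_sum} gives $\sum_{k\ge n^{\s+\rho}}\sum_y\E[\alpha_k(y)^2]\le Cn^{-\xi-\eps}$, while for $k\le n^{\s+\rho}$ one must exploit orthogonality in $k$ together with spatial decorrelation (the paper does this on $\Dmg_k$ directly, without Taylor expansion). Moreover, a literal application of Lemma~\ref{lem:bh} to the quadratic chaos with coefficients $n^{-d/2}\sum_x f\,\alpha_k(x,y)^2$ leads to moments of (reverse) partition functions of order $2p>\p\wedge2$, which are not available here.

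Second, your $I_3$ argument is an $L^2$-regime argument: the variance computation for $\sum_y p_k(x,y)^2(\cev Z^{k,y}_{[k-k^{1/8},k)})^2$ requires $\E[(\cev Z^{k,y}_{[k-k^{1/8},k)})^4]$ (and already $\E[(\cev Z)^2]$ for the mean), and for $\beta$ beyond the relevant $L^q$-thresholds these moments grow super-polynomially in the slab width $k^{1/8}$. This is precisely the obstruction the whole paper circumvents. The correct (and much simpler) route is the plain $L^1$ bound $\E|\Dprev_k(x)|\le C\,\E[\sum_y\alpha_k(x,y)^2]$ from Proposition~\ref{prop:log}, summed via \eqref{eq:partial_sum}. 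Third, and most importantly, the intermediate window $I_2$ --- which you correctly identify as the crux --- is left without an argument: merging $I_2$ into $I_3$ does not work (the $I_3$ mechanism is broken as above, and the single-time bounds become of order $n^{d-2\xi}$ as $\rho\downarrow0$), and the proposed ``Doob decomposition of $A_k(x)$'' is not substantiated. The missing idea is a \emph{cross-time} decorrelation estimate (Lemma~\ref{lem: midterm}): for $m\ge k+2k^{1/8}$ one has $\E[|\Dprev_k(x)\Dprev_m(x')|]\le Ck^{\frac d2-\frac{(\p\wedge2)d}2-\eta}$, proved by replacing $\alpha_m$ with $\widetilde\alpha_m$ from \eqref{eq:def_alphatilde}, which depends only on the environment in the slab $[m-m^{1/8},m)$ and is therefore independent of $\Dprev_k$; the product of the two expectations then beats the single-time bound by a polynomial factor, and the replacement error is controlled by Lemma~\ref{lem:ratio}. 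Without this ingredient \eqref{eq:KPZ_prev_intermediate_small} does not close.
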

The proof of \eqref{eq:KPZ_mg_small} is not too difficult, since the integrand is a martingale and there is good independence between the contributions from different starting positions. The approximation for the previsible part is split into three regimes, where the first case \eqref{eq:KPZ_prev_very_small} can still be treated using independence{, i.e., Proposition~\ref{prop: log correlation}(ii)}. The final case \eqref{eq:KPZ_prev_not_too_small} is also not too difficult by applying the control on the replica overlap from Proposition~\ref{prop:alpha2}. {The intermediate regime \eqref{eq:KPZ_prev_intermediate_small} is the most technical step, and we postpone the intuition to the beginning of Section~\ref{sec:intermediate}}. 
\subsection{Proof of \eqref{eq:KPZ_mg_small}, \eqref{eq:KPZ_prev_very_small} and \eqref{eq:KPZ_prev_not_too_small}}
The proof{s} of these claims {are} not that difficult.
\begin{proof}[Proof of \eqref{eq:KPZ_prev_not_too_small}]
We first note that there exists $C>0$ such that, for all $k\in\N$ and $x\in\Z^d$,
\begin{align}\label{Triangle bound}
\mathbb{E}[|\Dprev_k(x)|]\leq C\E\Big[\sum_{y\in \Z^d}\alpha_{k}(x,y)^2\Big].
\end{align}
Indeed, by the Jensen  inequality, 
\begin{align*}
	\mathbb{E}[|\Dprev_k(x)|]=\E\Big[\Big|\E\Big[\log\frac{Z^x_k}{Z^x_{k-1}}\Big|\F_{k-1}\Big]-\E\Big[\log\frac{Z_k}{Z_{k-1}}\Big]\Big|\Big]\leq 2\E\Big[\Big|\E\Big[\log\frac{Z^x_k}{Z^x_{k-1}}\Big|\F_{k-1}\Big]\Big|\Big],
\end{align*}
and by Proposition~\ref{prop:log}, there exists $C>0$ such that, 
\al{
	\Big|\E\Big[\log\frac{Z^x_k}{Z^x_{k-1}}\Big|\F_{k-1}\Big]\Big|=\Big|\E\Big[\log\Big(1+\sum_{y\in \Z^d}\alpha_k(x,y)E_{k,y}\Big)\Big|\F_{k-1}\Big]\Big|\leq C \sum_{y\in \Z^d}\alpha_{k}(x,y)^2.
}
{Now the claim follows from \eqref{eq:partial_sum}.}
\end{proof}
Next, we take care of the martingale part.
\begin{proof}[Proof of \eqref{eq:KPZ_mg_small}]
	We first take care of the contribution from times $k\leq n^{\s+\rho}$. By the Cauchy-Schwarz inequality, we have
\begin{align}
\mathbb{E}\Big[\Big|\sum_{x\in \Z^d}f(x/\sqrt{n})\sum_{k=1}^{n^{\s+\rho}}\Dmg_k(x)\Big|\Big]^2 &\leq  \mathbb{E}\Big[\Big(\sum_{x\in \Z^d}f(x/\sqrt{n})\sum_{k=1}^{n^{\s+\rho}}\Dmg_k(x)\Big)^2\Big]\notag\\
&=
\sum_{k=1}^{n^{\s+\rho}}\E\Big[\Big(\sum_{x\in\Z^d}  f\Big(\frac{x}{\sqrt{n}}\Big) \Dmg_k(x)\Big)^2\Big]\notag\\
 &=\sum_{k=1}^{n^{\s+\rho}} \sum_{x,x'\in\Z^d} f\Big(\frac{x}{\sqrt{n}}\Big)f\Big(\frac{x'}{\sqrt{n}}\Big)\mathbb{E}\left[\Dmg_k(x)\Dmg_k(x')\right],\label{eq:expression}
  \end{align}
  where in the second line we have used that the $\Dmg_k$ are orthogonal in $k$. We now apply Proposition~\ref{prop: log correlation}(ii) as follows: first, the contribution from terms with $k\leq n^\rho$ is bounded by $Cn^{\frac{d}{2}+\rho(d+1)}$. Next, for $k\geq n^{\rho}$ and $|x-x'|\geq k^{1/2}\log k$, Proposition~\ref{prop: log correlation}(ii) yields $|\mathbb{E}[\Dmg_k(x)\Dmg_k(x')]|\leq Ce^{-(\log k)^\gamma/C}\leq C e^{-\rho^\gamma ( \log n)^\gamma/C}$, so the contribution from such terms is negligible. To handle the remaining terms, by Proposition~\ref{prop: log correlation}(ii) again, we estimate 
  \begin{align*}
	&\Big|  \sum_{k=n^\rho+1}^{n^{\s+\rho}} \sum_{|x-x'|\leq k^{1/2}\log k} f(x/\sqrt n)f(x'/\sqrt n)\mathbb{E}[\Dmg_k(x)\Dmg_k(x')]\Big|\\
	&\leq C n^{\frac{d}{2}}\sum_{k=n^\rho+1}^{n^{\s+\rho}} k^{d -\frac{(\p\wedge 2)d}2+\eta}(\log k)^d\\
	&\leq C n^{\frac d2+(\s+\rho)( d+1-\frac{(\p\wedge 2)d}2+\eta)}(\log n)^d\\
	&=Cn^{d-2\xi-\s+\rho( d+1-\frac{(\p\wedge 2)d}2)+(\s+\rho)\eta} (\log n)^d.
  \end{align*}
Putting things together, if we choose $\rho$ and $\eta$ small enough that $\rho(d+1-\frac{(\p\wedge 2)d}2)+(\s+\rho)\eta<\frac{\s}{2}$ and $\rho(d+1)<1/2$, then for all $n$ large enough,
  \begin{align*}
	  \E\Big[\Big|n^{-d/2}\sum_{k=1}^{n^{\s+\rho}}f(x/\sqrt n)\Dmg_k(x)\Big|\Big]\leq C n^{-\frac{d-1}{4}} + Ce^{-( \log n)^\gamma /C}+ Cn^{-\xi-\frac{\s}{{4}}}(\log n)^{\frac{d}{2}}.
 \end{align*}
 Recalling that $\xi = \frac{d}{2} - \frac{d+2}{2(\p\wedge 2)}\leq \frac{d-2}4$, the above bound is indeed sufficient for \eqref{eq:KPZ_mg_small}. It remains to control the contribution from $k> n^{\s+\rho}$. To this end, with some constant $C>0$ by Proposition~\ref{prop:log},
\al{
{\sum_{k=n^{\s+\rho}+1}^n}\E\Big|\Dmg_k(x)- \sum_{y\in \Z^d}\alpha_{k}(x,y) E_{k,y}\Big|\leq C {\sum_{k=n^{\s+\rho}+1}^n}\E\sum_{y\in \Z^d}\alpha_{k}(x,y)^2,}
and the right-hand side {is bounded by $Cn^{-\xi-\eps}$ by \eqref{eq:partial_sum}, for some $\eps>0$ and $C>0$}. Finally, from Lemma~\ref{lem: martingale estimate until Ln}, we get 
\begin{align}\label{eq:eqeq}
	n^{-d/2}\E\Big[\Big|\sum_{x\in \Z^d}f(x/\sqrt n) \sum_{k=n^{\s+\rho}+1}^{n^{1-\delta}}\sum_{y\in \Z^d} \alpha_k(x,y) E_{k,y}\Big|\Big]\leq  n^{-\xi-\eps}.
\end{align}
Putting things together, the proof is completed.
\end{proof}
\begin{proof}[Proof of \eqref{eq:KPZ_prev_very_small}]
Using the Cauchy-Schwarz inequality, we have
\al{
\E\Big[\Big(\sum_{k=1}^{n^{\s-\rho}}\sum_{x\in\Z^d}  f\Big(\frac{x}{\sqrt{n}}\Big) \Dprev_k(x)\Big)^2\Big]
 &\leq { n^{\s-\rho}}\sum_{k=1}^{n^{\s-\rho}} \E\Big[\Big(\sum_{x\in\Z^d} f\Big(\frac{x}{\sqrt{n}}\Big) \Dprev_k(x)\Big)^2\Big]\\
 &={ n^{\s-\rho}}\sum_{k=1}^{n^{\s-\rho}} \sum_{x,x'\in\Z^d} f\Big(\frac{x}{\sqrt{n}}\Big)f\Big(\frac{x'}{\sqrt{n}}\Big)\mathbb{E}\left[\Dprev_k(x)\Dprev_k(x')\right].
}
This expression is {similar to the expression in} \eqref{eq:expression}, except that {the summation runs to $n^{\s-\rho}$ instead of $n^{\s+\rho}$,} ``$\Dmg$'' has been replaced by ``$\Dprev$'' {and the whole expression is multiplied by $n^{\s-\rho}$}. Since the bound from Proposition~\ref{prop: log correlation}(ii) applies to both quantities, the same argument as in the proof of \eqref{eq:KPZ_mg_small} {shows that}
\begin{align*}
&{n^{\s-\rho}\sum_{k=1}^{n^{\s-\rho}} \sum_{x,x'\in\Z^d} f\Big(\frac{x}{\sqrt{n}}\Big)f\Big(\frac{x'}{\sqrt{n}}\Big)\mathbb{E}\left[\Dprev_k(x)\Dprev_k(x')\right]}\\
&{\leq Cn^{\s-\rho} n^{d-2\xi-\s-\rho( d+1-\frac{(\p\wedge 2)d}2)+(\s-\rho)\eta} (\log n)^d,}
\end{align*}
{which is bounded by $n^{d-2\xi-\eps}$ for some $\eps>0$ if we choose $\eta$ small enough.}
\end{proof}

\subsection{Proof of \eqref{eq:KPZ_prev_intermediate_small}}\label{sec:intermediate}

{ This regime is the most difficult part of this section, as Propositions~\ref{prop:alpha2} and~\ref{prop: log correlation}(ii) do not yield the desired bound; indeed, both bounds become of order $n^{d-2\xi}$ as $\rho$ goes to $0$. Thus, we must understand correlations between log-partition functions with different starting times, while Proposition~\ref{prop: log correlation} only deals with correlations at the same starting time.}
\begin{lem}\label{lem: midterm}
There exist $\eta,C>0$ such that, the following holds. For any $n,m\in  \N$ with $m\geq n+2n^{1/8}$, we have for any $x,x'\in\Z^d$,
    \al{
    \mathbb{E}[|\Dprev_{n}(x)\Dprev_{m}(x')|]\leq C n^{\frac{d }{2}-\frac{(\p\wedge 2)d}{2}-\eta}.
    }
\end{lem}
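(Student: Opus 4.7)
The plan is to exploit the time gap $m - n \geq 2 n^{1/8}$ by approximating $\Dprev_m(x')$ with a random variable that depends only on the environment in the window $[m - m^{1/8}, m - 1]$ and hence is independent of $\Dprev_n(x)$. A quick case analysis shows that $m \geq n + 2 n^{1/8}$ forces $m - m^{1/8} \geq n$: if $m \leq 256 n$ then $m^{1/8} \leq 2 n^{1/8} \leq m - n$, while if $m > 256 n$ then $m^{1/8} \leq m/2 < m - n$. Using the approximation $\tilde\alpha_m(x', y) := (p_m(y - x')\, \cev Z^{m, y}_{[m - m^{1/8}, m)}) \wedge 1$ from Lemma~\ref{lem:ratio}, I would define
\begin{equation*}
\tilde \Delta_m(x') := \mathbb{E}\Big[\log\Big(1 + \sum_y \tilde\alpha_m(x', y) E_{m,y}\Big)\Big|\mathcal{G}_m\Big] - \mathbb{E}\Big[\log\Big(1 + \sum_y \tilde\alpha_m(y) E_{m,y}\Big)\Big],
\end{equation*}
where $\mathcal{G}_m := \sigma(\omega_{k, y}: m - m^{1/8} \leq k \leq m - 1, \, y \in \Z^d)$. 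By translation invariance of $\tilde\alpha_m$ in $x'$, the random variable $\tilde \Delta_m(x')$ has mean zero, and being $\mathcal{G}_m$-measurable it is independent of $\Dprev_n(x)$.

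Decomposing
\begin{equation*}
\mathbb{E}[|\Dprev_n(x) \Dprev_m(x')|] \leq \mathbb{E}[|\Dprev_n(x)|]\, \mathbb{E}[|\tilde\Delta_m(x')|] + \mathbb{E}[|\Dprev_n(x)| \cdot |R_m(x')|], \quad R_m(x') := \Dprev_m(x') - \tilde\Delta_m(x'),
\end{equation*}
isolates the independent contribution. The bound $|\Dprev_k(\cdot)| \leq C \sum_y \alpha_k(\cdot, y)^2$ from Proposition~\ref{prop:log} (and its analogue for $\tilde\Delta_m$ with $\tilde\alpha_m$), combined with Proposition~\ref{prop:alpha2}, gives each factor $\leq C n^{d/2 - (\p\wedge 2) d/2 + \varepsilon}$. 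Their product is thus $C n^{d - (\p\wedge 2) d + 2\varepsilon}$, which is of strictly smaller order than $n^{d/2 - (\p\wedge 2) d/2 - \eta}$ for any $\eta < d((\p\wedge 2) - 1)/2 - 2\varepsilon$---a positive quantity because $(\p\wedge 2) > 1 + 2/d$.

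The main obstacle is the error contribution $\mathbb{E}[|\Dprev_n(x) R_m(x')|]$: a naive Cauchy--Schwarz $\mathbb{E}[\Dprev_n^2]^{1/2} \mathbb{E}[R_m^2]^{1/2}$ only yields the target rate \emph{without} the $\eta$-improvement, so a genuine polynomial gain on $R_m$ over $\Dprev_m$ is essential. I would expand $G(\alpha) := \mathbb{E}[\log(1 + \sum_y \alpha(y) E_y)]$ via the second-order Taylor formula $\log(1+u) = u - u^2/2 + O(u^3)$, giving $G(\alpha) = -\tfrac{v^2}{2}\sum_y \alpha(y)^2 + O\big(\sum_y \alpha(y)^3\big)$ with $v^2 := \mathrm{Var}(E_{m,y})$. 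Up to cubic remainders (dominated by $\sum_y \alpha_m^3 \leq \sum_y \alpha_m^2$ since $\alpha_m \leq 1$), $R_m(x')$ becomes the centered version of $\sum_y [\alpha_m(x', y)^2 - \tilde\alpha_m(x', y)^2]$; using $|\alpha^2 - \tilde\alpha^2| \leq 2|\alpha - \tilde\alpha|$ (both lie in $[0,1]$), I would control its $L^2$-norm via Lemma~\ref{lem:ratio}(ii), which provides $\mathbb{E}[|\alpha_m(y) - \tilde\alpha_m(y)|^p] \leq C m^{-(\p\wedge 2) d/2 - \varepsilon'}$ for $p \in [\p \wedge 2, 2]$---a polynomial factor smaller than the corresponding bound on $\alpha_m(y)$. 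Outside the diffusive window $|y - x'| \leq m^{1/2} \log m$ the contributions are superpolynomially small by Lemma~\ref{lem:compare} and Theorem~\ref{thm: lower tail concentration}. The hard part is then to convert this pointwise gain into an $L^2$-gain on the full sum; I expect this to require a careful variance computation that leverages the centering together with the spatial decorrelation of $\alpha_m(x', y)$ for widely separated $y$, and this is where most of the technical work will be concentrated.
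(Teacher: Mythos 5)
Your overall strategy -- exploiting the gap $m-m^{1/8}\geq n$ to replace the $m$-increment by a version built from $\tilde\alpha_m$ that is independent of $\Dprev_n(x)$, factoring the independent part, and killing the far field superpolynomially -- is exactly the paper's. But your proof has a genuine gap at the error term, and you acknowledge it yourself: you never establish the polynomial gain on $\E[|\Dprev_n(x)R_m(x')|]$, instead deferring to ``a careful variance computation that leverages the centering together with the spatial decorrelation of $\alpha_m(x',y)$''. No such computation appears in the paper, and none is needed. The point you are missing is that $|\Dprev_n(x)|$ is \emph{almost surely bounded by a deterministic constant}: by Proposition~\ref{prop:log}, $|\E[\log(Z_n^x/Z_{n-1}^x)\mid\F_{n-1}]|\leq C\sum_y\alpha_n(x,y)^2\leq C\sum_y\alpha_n(x,y)=C$ since $\alpha_n(x,\cdot)$ is a probability measure bounded by one. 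Hence $\E[|\Dprev_n(x)|\cdot|\mathrm{error}|]\leq C\,\E[|\mathrm{error}|]$, and only an $L^1$ bound on the error is required -- which can be obtained \emph{termwise}, with no need to exploit centering or decorrelation across different $y'$.

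Concretely, the paper does not define a modified log-increment $\tilde\Delta_m$ and Taylor-expand the difference of logarithms; it first applies the pointwise bound $|\Dprev_m(x')|\leq C\sum_{y'}\bigl(\alpha_m(x',y')^2+\E[\alpha_m(x',y')^2]\bigr)$ and only then substitutes $\tilde\alpha_m$ for $\alpha_m$ inside the quadratic functional. The resulting error is $\sum_{|x'-y'|\leq m^{1/2}\log m}\E[|\alpha_m(x',y')^2-\tilde\alpha_m(x',y')^2|]$, and each summand is controlled by Cauchy--Schwarz,
\begin{align*}
\E[|\alpha_m^2-\tilde\alpha_m^2|]\leq 2\,\E[\alpha_m^2+\tilde\alpha_m^2]^{1/2}\,\E[(\alpha_m-\tilde\alpha_m)^2]^{1/2}\leq C m^{-\frac{(\p\wedge2)d}{4}+\eps}\cdot m^{-\frac{(\p\wedge2)d}{4}-2\eps}=Cm^{-\frac{(\p\wedge2)d}{2}-\eps},
\end{align*}
using Proposition~\ref{prop:alpha2} for the first factor and Lemma~\ref{lem:ratio}(ii) (with $p=2$) for the second; summing over the $O(m^{d/2}(\log m)^d)$ sites in the diffusive window gives $m^{\frac d2-\frac{(\p\wedge2)d}{2}-\eps}(\log m)^d$, which is the claimed bound since $m\geq n$ and the exponent is negative. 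This is precisely the ``pointwise gain'' you already identified; you simply did not notice that it can be summed directly in $L^1$ because the companion factor $|\Dprev_n(x)|$ costs only a constant. Your Taylor-expansion detour also introduces a cubic remainder and a difference of conditional expectations of logarithms that you would additionally have to control, so even setting aside the unfinished step, the paper's route through the quadratic bound of Proposition~\ref{prop:log} is both shorter and avoids these complications.
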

We first complete the proof of \eqref{eq:KPZ_prev_intermediate_small}.
\begin{proof}[Proof of \eqref{eq:KPZ_prev_intermediate_small} assuming Lemma~\ref{lem: midterm}]  By expanding the square, we have
    \al{
&\E\Big[\Big|\sum_{x\in\Z^d} \sum_{k=n^{{\s-\rho}}}^{n^{\s+\rho}}f(x/\sqrt n) \Dprev_k(x)\Big|\Big]^2\\
 &\leq 2\sum_{x\in\Z^d}\sum_{x'\in\Z^d}   \sum_{k_1=n^{\s-\rho}}^{n^{\s+\rho}+1} \sum_{k_2=k_1}^{n^{\s+\rho}} \left|f\left(\frac{x}{\sqrt{n}}\right) f\left(\frac{x'}{\sqrt{n}}\right)\right|\left|\mathbb{E}\left[\Dprev_{k_1}(x)\Dprev_{k_2}(x')\right]\right|\\
 &= I_1+I_2+I_3,
}
where $I_3$ is defined by replacing the constraint ``$x'\in\Z^d$'' in the second sum by ``$|x-x'|> n^{{(\s+\rho)/2}}\log{n}$''. Moreover, from the remaining sum over $|x-x'|\leq n^{{(\s+\rho)/2}}\log{n}$, we define $I_1$ (resp. $I_2$) by taking the innermost sum over $k_2=k_1+2k_1^{{1/8}}+1,\dots,n^{\s+\rho}$ (resp. over  $k_2=k_1,\dots,k_1+2k_1^{{1/8}}+1$). Due to Proposition~\ref{prop: log correlation}(ii), we obtain that $I_3$ decays super-polynomially.
\smallskip By Lemma~\ref{lem: midterm}, each expectation in $I_1$ can be bounded by $k_1^{ \frac{d}{2}-(\p\wedge 2)\frac{d}{2}-\eta}$ with some $\eta>0$ independent of $\rho$. By considering the number of non-zero terms in each sum, we obtain
\begin{align*}
    I_1&\leq Cn^{\frac{d}{2}}\,( n^{\frac{(\s+\rho)d}{2}} (\log n)^d )\,n^{\s+\rho}\,n^{\s+\rho}\,n^{(\s-\rho)(\frac{d}{2}-\frac{(\p\wedge 2)d}{2}-\eta)}\\
    &= Cn^{\frac{d+2}{\p\wedge 2}+\rho(2+\frac{(\p\wedge 2)d}2)-\eta(\s-\rho)}\,(\log n)^d .
\end{align*}
Similarly, for $I_2$ we use Proposition~\ref{prop: log correlation}(ii) to bound the expectation as 
\begin{align*}
    |\mathbb{E}[\Dprev_{k_1}(x)\Dprev_{k_2}(x')]|\leq \mathbb{E}[\Dprev_{k_1}(x)^2]^{1/2}\mathbb{E}[\Dprev_{k_2}(x')^2]^{1/2}\leq n^{(\s-\rho)(\frac d2-\frac{(\p\wedge 2)d}2+\rho)},
\end{align*}
and thus 
\begin{align*}
     I_2&\leq Cn^{\frac{d}{2}}\,(n^{\frac{(\s+\rho)d}{2}}(\log n)^d )\,n^{\s+\rho}\,n^{\frac{\s+\rho}8}\,n^{(\s-\rho)(\frac{d}{2}-\frac{(\p\wedge 2)d}{2}+\rho)}\\
     &= Cn^{\frac{d+2}{\p\wedge 2}+\rho(\frac 98+\frac{(\p\wedge 2)d}2+\s-\rho)-\frac{7/8}{\p\wedge 2}}\, (\log n)^d.
\end{align*}
If we choose $\rho$ sufficiently small, we obtain $I_1+I_2\leq Cn^{2d-2\xi-\eps}$ for $\eps\coloneqq \min\{\frac{\eta(\s-\rho)}2,\frac{1}{2(\p\wedge 2)})$.
\end{proof}
\begin{proof}[Proof of Lemma~\ref{lem: midterm}]
We note that the  left-hand side  in the claim is bounded for fixed $n,m\in\N$, hence without loss of generality, we can suppose that $n$ is large enough, in particular, $n\geq 2^{{8/7}}$. 

Note that by by Proposition~\ref{prop:log}, there exists $C>0$ such that, for any $x,x',n$,
\aln{\label{Delta to alpha}
|\Dprev_{n}(x)\Dprev_{m}(x')|\leq C|\Dprev_{n}(x)| \Big(\sum_{y'\in \Z^d}\big(\alpha_{m}(x',y')^2+\mathbb{E}[\alpha_{m}(x',y')^2]\big)\Big).
}
Using again Proposition~\ref{prop:log} and Proposition~\ref{prop:alpha2}, we get
\begin{equation}\label{eq:thesame}
\begin{split}
    \mathbb{E}[|\Dprev_n(x)|]\sum_{y'}\E\Big[\alpha_m(x',y')^2\Big]&\leq C\Big(\sum_{y'}\E\Big[\alpha_n(x',y')^2\Big]\Big)\Big(\sum_{y'}\E\Big[\alpha_m(x',y')^2\Big]\Big)\\
    &\leq C n^{\frac d2-\frac{(\p\wedge 2)d}2+\eta}m^{\frac d2-\frac{(\p\wedge 2)d}2+\eta}\\
    &\leq Cn^{2(\frac d2-\frac{(\p\wedge 2)d}2+\eta)},
    \end{split}
    \end{equation}
where $\eta>0$ is arbitrary. The exponent in the last line is smaller than $\frac d2-\frac{(\p\wedge 2)d}2-\eta$ if $\eta$ is small enough. It remains to bound $\mathbb{E}[|\Dprev_n(x)|\sum_{y'}\alpha_m(x',y')^2]$, and for this purpose, we recall $\tilde{\alpha}$ from \eqref{eq:def_alphatilde}. Now, since both $\sum_{y'}\alpha_m(x',y')^2$ and $\widetilde\alpha_m(x',y')$ are bounded by $1$, we get, for any $\eta>0$,
\begin{equation}\label{eq:finalterm}
\begin{split}
&\Big|\sum_{y'\in\Z^d}\E\big[|\Dprev_n(x)|\alpha_m(x',y')^2]\Big|\\
    &\leq C\sum_{|x'-y'|\geq m^{1/2}\log m}\mathbb{E}[\alpha_m(x',y')^2]+\sum_{|x'-y'|\leq m^{1/2}\log m}\E\big[|\Dprev_n(x)|\widetilde \alpha_m(x',y')^2]\\
    &\quad+C\sum_{|x'-y'|\leq m^{1/2}\log m}  \mathbb{E}[|\alpha_{m}(x',y')^2-\tilde{\alpha}_{m}(x',y')^2|],
    \end{split}
    \end{equation}
where we have used that $\Dprev_m(x)$ is almost surely bounded due to Proposition~\ref{prop:log}. For the first term, we note that the sum is bounded by $\sum_{|x'-y'|\geq m^{1/2}\log m}\mathbb{E}[\alpha_m(x',y')]$, and by Lemma~\ref{lem:compare} this term decays super-polynomially in $m$. For the second term, we note that 
\begin{align*}
&\E\Big[\Big|\Dprev_n(x)\sum_{|x'-y'|\leq m^{1/2}\log m}\widetilde\alpha_m(x',y')^2\Big|\Big]\\
&=\mathbb{E}[|\Dprev_n(x)|]\Big(\sum_{|x'-y'|\leq m^{1/2}\log m}\mathbb{E}[\widetilde\alpha_m(x',y')^2]\Big)\\
&\leq C\Big(\sum_{y}\mathbb{E}[\alpha_n(x,y)^2]\Big)\Big(\sum_{y'} \mathbb{E}[\alpha_m(x',y')^2 ]\Big)+C\sum_{|x'-y'|\leq m^{1/2}\log m} \mathbb{E}[|\alpha_m(x',y')^2-\widetilde\alpha_m(x',y')^2|].
\end{align*}
In the equality, we have used that due to our assumption on $n$ and $m$, we had $n\leq m-m^{1/8}$. {To see that this indeed follows from our assumption $m\geq n + 2 n^{1/8}$, note that if $m\geq 2n$, then $n\leq m- (m/2)\leq m-m^{1/8}$. Otherwise, if $m\leq 2n$, then $n \leq m - 2n^{1/8} \leq m- 2(m/2)^{1/8}\leq m-m^{1/8}$. Therefore, $n\leq m-m^{1/8}$ holds in both cases and we conclude that $\Dprev_n(x)$ and $\widetilde \alpha_m(x',y')$ are independent. Now,} the first term {in the previous display} is bounded by $Cn^{d-(\p\wedge 2)d+\eta}$ due to Proposition~\ref{prop:alpha2}, which is the same bound as in \eqref{eq:thesame}. For the second term, which is the same as the final term in \eqref{eq:finalterm}, we apply Propositions~\ref{lem:ratio} and~\ref{prop:alpha2} to get, for fixed $\eps>0$ small enough,
\begin{align*}
\mathbb{E}[|\alpha_{m}(x',y')^2-\tilde{\alpha}_{m}(x',y')^2|]&=\mathbb{E}[(\alpha_{m}(x',y')+\tilde{\alpha}_{m}(x',y'))((\alpha_{m}(x',y')-\tilde{\alpha}_{m}(x',y'))]\\
    &\leq 2\mathbb{E}[\alpha_{m}(x',y')^2+\tilde{\alpha}_{m}(x',y')^2]^{1/2}\mathbb{E}[(\alpha_{m}(x',y')-\tilde{\alpha}_{m}(x',y'))^2]^{1/2}\\
    &= 2\mathbb{E}[\alpha_{m}(x',y')^2+\tilde{\alpha}_{m}(x',y')^2]^{1/2}\mathbb{E}[(\alpha_{m}(y'-x')-\tilde{\alpha}_{m}(y'-x'))^2]^{1/2}\\
        &\leq Cm^{-\frac{(\p\wedge 2)d}{4}+\eps}m^{-\frac{(\p\wedge 2)d}{4}-2\eps} = Cm^{-\frac{(\p\wedge 2)d}{2}-\eps}.
\end{align*}
By summing this over $|y'-x'|\leq m^{1/2}\log m$, we obtain the desired bound. 
\end{proof}
\appendix
\section{Doob's decomposition of $\log Z_n$}

The following result is essentially the same as \cite[Lemma 2.1]{CSY03}. Since we slightly change the presentation, we {repeat the proof here for completeness}.

\begin{prop}\label{prop:log}
Let $A$ be a countable set and $(\eta_i)_{i\in A}$ i.i.d.\ positive random variables with $\E \eta_i =1$. We assume that 
$$\mathbb{E}\left[\eta_i^2 + (\log{\eta_i})^2\right]<\infty.$$
We consider a probability measure $(a_i)_{i\in A}$ on A, i.e., $a_i \geq 0$ and  $\sum_{i \in A} a_i = 1.$ We define
$$U \coloneqq  \sum_{i\in A} a_i (\eta_i - 1),\quad \gamma \coloneqq  -\mathbb{E}[\log(\eta_i)].$$
Define $\phi(x)\coloneqq x-\log{(1+x)}$. There exists $C>0$ independent of $(a_i)$ such that,
\aln{\label{upper ineq for U}
\begin{split}
    &\mathbb{E}[|\phi(U)|]\leq C \sum_{i\in A}a_i^2,\\
&\mathbb{E}[(\log{(1+U)})^2]\leq C \sum_{i\in A}a_i^2.
\end{split}
}
\end{prop}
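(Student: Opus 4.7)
The plan is to prove both inequalities via a good-event vs.\ bad-event decomposition. Let $G := \{1+U \geq 1/2\}$ and $B := G^c = \{1+U < 1/2\}$. First observe that $\phi(U) \geq 0$ for $U > -1$ (since $\log(1+x) \leq x$), so $|\phi(U)| = \phi(U)$, and that $\mathbb{E}[U^2] = \operatorname{Var}(\eta_1) \sum_i a_i^2$ by independence and the hypothesis $\mathbb{E}[\eta_1] = 1$.

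On $G$ we use elementary Taylor arguments: the integral representation
\begin{align*}
\phi(x) = x^2 \int_0^1 \frac{t}{1+tx}\,dt
\end{align*}
combined with $1+tU \geq 1/2$ for $t \in [0,1]$ gives $\phi(U) \leq U^2$, and $|\log(1+U)| \leq 2|U|$ follows from the mean value theorem applied to $\log$ between $1$ and $1+U$. Taking expectations yields the desired $O(\sum_i a_i^2)$ bound for both integrals restricted to $G$.

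The key point for $B$ is the deterministic bound $\mathbf{1}_B \leq 4U^2$ (since $|U| \geq 1/2$ there), which converts the rare-event indicator into an extra quadratic factor. On $B$ we combine this with two consequences of Jensen's inequality applied to the concavity of $\log$ (writing $1+U = \sum_i a_i \eta_i$ and using $|U| \leq 1$):
\begin{align*}
\phi(U) &\leq 1 + |\log(1+U)| \leq 1 + \sum_i a_i\, |\log \eta_i|\, \mathbf{1}_{\{\eta_i \leq 1\}},\\
(\log(1+U))^2 &\leq \sum_i a_i\, (\log \eta_i)^2\, \mathbf{1}_{\{\eta_i \leq 1\}},
\end{align*}
where the second line uses the truncation $\eta_i \mapsto \eta_i \wedge 1$ (which only lowers $\log(1+U)$, hence raises its square) together with the convexity of $(\log x)^2$ on $(0,1]$. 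Multiplying either right-hand side by $\mathbf{1}_B \leq 4U^2$, expanding $U^2 = \sum_{j,k} a_j a_k (\eta_j-1)(\eta_k-1)$, and using $\mathbb{E}[\eta_j - 1] = 0$ with independence, the only surviving terms in the triple sum are of type $j = k \neq i$ (contributing $\sum_i a_i \sum_{j \neq i} a_j^2 \leq \sum_j a_j^2$) and the diagonal $i = j = k$ (contributing $\sum_i a_i^3 \leq \sum_i a_i^2$), so the $B$-contribution is $O(\sum_i a_i^2)$.

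The main obstacle is obtaining the sharp $\sum_i a_i^2$ scaling on $B$ rather than the weaker $\sqrt{\sum_i a_i^2}$ that a naive combination of Chebyshev's and Cauchy-Schwarz inequalities would give. The resolution uses the extra $U^2$ from $\mathbf{1}_B \leq 4U^2$ together with the truncation to $\{\eta_i \leq 1\}$: the latter bounds $(\eta_i - 1)^2 \leq 1$ on the relevant set, ensuring that the potentially problematic diagonal expectations (such as $\mathbb{E}[|\log \eta|\,(\eta-1)^2\,\mathbf{1}_{\{\eta \leq 1\}}]$) remain finite using only $\mathbb{E}[\eta_1^2] + \mathbb{E}[(\log \eta_1)^2] < \infty$, so no higher moments of $\eta_1$ are required.
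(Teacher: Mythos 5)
Your proof is correct, and while it shares the overall architecture of the paper's argument (a good-event/bad-event split, with Taylor bounds where $1+U$ is bounded away from $0$ and Jensen's inequality for the concave logarithm where it is not), the way you extract the $\sum_i a_i^2$ scaling on the bad event is genuinely different. The paper introduces the centered log-average $V=\sum_i a_i(\log\eta_i+\gamma)$ and observes that on the bad event one automatically has $-V\geq 1$, so that the Jensen bound $-\log(1+U)\leq -V+\gamma$ can be upgraded to $(1+|\gamma|)V^2$, and the quadratic scaling then drops out of the single identity $\mathbb E[V^2]=\mathbb E[(\log\eta+\gamma)^2]\sum_i a_i^2$. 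You instead convert the bad-event indicator into $\mathbf 1_B\leq 4U^2$, multiply it against an uncentered Jensen bound, and expand the resulting triple sum, using $\mathbb E[\eta_j-1]=0$ and independence to kill the cross terms; the truncation to $\{\eta_i\leq 1\}$ is the right device to keep the diagonal terms (e.g.\ $\mathbb E[(\log\eta)^2(\eta-1)^2\mathbf 1_{\{\eta\leq1\}}]$) finite under only the assumed moments, and the convexity of $(\log x)^2$ on $(0,1]$ correctly handles the second inequality. The paper's route is shorter because the second-moment identity for $V$ does all the work at once; yours is more hands-on but requires no centering of $\log\eta_i$ and makes transparent exactly which moment assumptions enter where. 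Both are complete proofs.
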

\begin{proof}
   Let    $V \coloneqq  \sum_{i\in A} a_i (\log{\eta_i} + \gamma).$ Let $\varepsilon > 0$ be such that, 
$$-\log\varepsilon - \gamma \geq 1.$$
\text{If } $U \leq \varepsilon-1,$ \text{ then}
    $$1+\gamma\leq -\log{\eps}\leq -\log(1+U) = -\log\left(\sum_{i\in A} a_i \eta_i\right).$$
    By the Jensen  inequality, this is further bounded from above by 
    $$-\sum_{i\in A} a_i \log \eta_i = -V + \gamma.$$
    Hence, we have 
    $$\{U \leq \varepsilon-1\} \subseteq \{-\log(1+U) \leq -V+\gamma\}
\cap 
\{1 \leq -V\}.
$$
Therefore, since $ (-V+\gamma)\leq (1+|\gamma|) V^2$ on the event $\{1 \leq -V\}$, we have
\al{
-\mathbb{E}[\log(1+U)\mathbf{1}\{U \leq \eps-1\}]& \leq \mathbb{E}[(-V+\gamma)\mathbf{1}\{1 \leq -V\}]\\
&\leq \mathbb{E}[(1+|\gamma|)  V^2]\\
&=(1+|\gamma|)\mathbb{E}[(\log{\eta_i}+\gamma)^2] \sum_{i\in A}a_i^2.
}
Similarly, { since $-\log(1+U)\geq -\log\varepsilon\geq 0$ on the event $\{U \leq \eps-1\}$}, we have
\al{
\mathbb{E}[(\log(1+U))^2\mathbf{1}\{U \leq \eps-1\}]& \leq \mathbb{E}[(-V+\gamma)^2 \{1 \leq -V\}]\\
&\leq (1+|\gamma|)^2 \mathbb{E}[V^2]=(1+|\gamma|)^2\mathbb{E}[(\log{\eta_i}+\gamma)^2] \sum_{i\in A}a_i^2.
}
Note that for $x>-1$, $\phi(x)\geq 0$. 
Note that there exists $C=C(\eps)>0$ such that, for $x\geq \eps-1$, $\log{(1+x)}\geq x- C x^2.$ Since $\mathbb{E}[U]=0$, $\mathbb{E}[U \mathbf{1}_{U\leq \eps-1}]\leq 0$. Thus, we have
\al{
\mathbb{E}[|\phi(U)|]=\E [\phi(U)]&= \mathbb{E}[(U-\log{(1+U)})\mathbf{1}_{U\geq \eps-1}]+\mathbb{E}[(U-\log{(1+U)})\mathbf{1}_{U\leq \eps-1}]\\
&\leq  C \mathbb{E}[U^2 \mathbf{1}_{U\geq \eps-1}]-\mathbb{E}[(\log{(1+U)})\mathbf{1}_{U\leq \eps-1}]\\
&\leq (C\mathbb{E}[(\eta_i-1)^2]+(1+|\gamma|)\mathbb{E}[(\log{\eta_i}+\gamma)^2]) \sum_{i\in A}a_i^2.
}
Similarly, since $\log{(1+x)}\leq C|x|$ for any $x\geq \e-1$ with some $C=C(\e)>0$, we have
\begin{align*}
\E [(\log{(1+U)})^2]&= \mathbb{E}[ (\log{(1+U)})^2 \mathbf{1}_{U\geq \eps-1}]+\mathbb{E}[(\log{(1+U)})^2 \mathbf{1}_{U\leq \eps-1}]\\
&\leq  C \mathbb{E}[U^2 \mathbf{1}_{U\geq \eps-1}]+\mathbb{E}[(\log{(1+U)})^2 \mathbf{1}_{U\leq \eps-1}]\\
&\leq (C\mathbb{E}[(\eta_i-1)^2]+(1+|\gamma|)^2\mathbb{E}[(\log{\eta_i}+\gamma)^2]) \sum_{i\in A}a_i^2.\qedhere
\end{align*}
\end{proof}

{
\section*{Acknowledgments}
This work was supported by JSPS Grant-in-Aid for Scientific Research 22K20344 and 23K12984. We are grateful to anonymous referees for their careful reading of our manuscript and for the constructive comments and suggestions. 
}

\bibliographystyle{plain}
\bibliography{ref}

\end{document}